\newcommand{\Z}{{\mathbb Z}}
\newcommand{\cA}{{\mathcal A}}
\newcommand{\cC}{{\mathcal C}}
\newcommand{\cH}{{\mathcal H}}
\newcommand{\cP}{{\mathcal P}}
\newcommand{\E}{{\mathsf E}}
\newcommand{\wh}[1]{\widehat{#1}}
\newcommand{\hZn}{{\widehat{\Z_n}}}
\newcommand{\hA}{{\widehat A}}
\newcommand{\hS}{{\widehat S}}
\newcommand{\wt}[1]{\widetilde{#1}}
\newcommand{\alp}{\alpha}
\newcommand{\bet}{\beta}
\newcommand{\del}{\delta}
\newcommand{\eps}{\varepsilon}
\newcommand{\lam}{\lambda}
\newcommand{\ome}{\omega}
\newcommand{\sig}{\sigma}
\newcommand{\zet}{\zeta}
\renewcommand{\phi}{\varphi}
\newcommand{\Gam}{\Gamma}
\DeclareMathOperator{\ord}{ord}
\newcommand{\longc}{,\dotsc,}
\newcommand{\longp}{+\dotsb+}
\newcommand{\longge}{\ge\dotsb\ge}
\newcommand{\longu}{\cup\dotsb\cup}
\newcommand{\est}{\varnothing}
\newcommand{\sbs}{\subset}
\newcommand{\seq}{\subseteq}
\newcommand{\stm}{\setminus}
\newcommand{\<}{\langle}
\renewcommand{\>}{\rangle}
\newcommand{\lcl}{\left\lceil}
\newcommand{\rcl}{\right\rceil}
\newtheorem{claim}{Claim}
\newtheorem{theorem}{Theorem}[section]
\newtheorem{lemma}[theorem]{Lemma}
\newtheorem{corollary}[theorem]{Corollary}
\newtheorem{conjecture}[theorem]{Conjecture}
\newtheorem{proposition}[theorem]{Proposition}
\theoremstyle{remark}
\newtheorem{remark}{Remark}
\numberwithin{equation}{section}
\numberwithin{claim}{section}
\newcounter{case}
\newcounter{subcase}[case]
\newcommand{\newcases}{\setcounter{case}{0}\setcounter{subcase}{0}}
\newcommand{\case}[1]{%
  \stepcounter{case}%
  \medskip
  \noindent
  {\bf Case \arabic{case}: {#1}.}}
\newcommand{\subcase}[1]{%
  \stepcounter{subcase}%
  \medskip
  \noindent
  \emph{Case \arabic{case}.\arabic{subcase}: #1.}}
\newcommand{\refc}[1]{\ref{c:#1}}
\newcommand{\refj}[1]{\ref{j:#1}}
\newcommand{\refl}[1]{\ref{l:#1}}
\newcommand{\refm}[1]{\ref{m:#1}}
\newcommand{\reft}[1]{\ref{t:#1}}
\newcommand{\refp}[1]{\ref{p:#1}}
\newcommand{\refs}[1]{\ref{s:#1}}
\newcommand{\refb}[1]{\cite{b:#1}}
\newcommand{\refe}[1]{\eqref{e:#1}}
\newcommand{\mysectionname}{}
\newcommand{\customheader}{}
\newcommand{\mysection}[1]{\renewcommand{\mysectionname}{#1}\section{#1}}
\title{Small doubling in cyclic groups}
\author{Vsevolod F. Lev}
\email{seva@math.haifa.ac.il}
\address{Department of Mathematics, The University of Haifa at Oranim,
  Tivon 36006, Israel}
\subjclass[2020]{Primary: 11P70; secondary: 11B75}
\begin{document}
\baselineskip=16pt

\begin{abstract}
We give a comprehensive description of the sets $A$ in finite cyclic groups
such that $|2A|<\frac94|A|$; namely, we show that any set with this
property is densely contained in a (one-dimensional) coset progression.
This improves earlier results of Deshouillers-Freiman and
Balasubramanian-Pandey.
\end{abstract}

\maketitle

\tableofcontents

\mysection{Introduction}\label{s:intro}

One of the central problems of the additive combinatorics is to understand
the structure of the small-doubling sets, or \emph{approximate subgroups},
which are sets $A$ of group elements such that the sumset
 $2A:=\{a+b\colon a,b\in A\}$ has size comparable with the size of $A$.

We use the additive notation throughout since we will be concerned with
abelian groups only, and particularly with the finite cyclic groups, which we
denote $\Z_n$; here $n$ is the order of the group. Our goal is to prove the
following result.

\begin{theorem}\label{t:main}
Let $n$ be a positive integer. If a set $A\seq\Z_n$ satisfies
$|2A|<\frac94|A|$, then one of the following holds:
\begin{itemize}
\item[i)] There is a subgroup $H\le \Z_n$ such that $A$ is contained in an
    $H$-coset and $|A|>C^{-1}|H|$, where
    $C=3\cdot10^4$. 
\item[ii)] There are a proper subgroup $H<\Z_n$ and an arithmetic
    progression $P$ of size $|P|>1$ such that $A\seq P+H$ and
       $$ (|P|-1)|H|\le|2A|-|A|. $$
\item[iii)] There is a proper subgroup $H<\Z_n$ such that $A$ meets exactly
    three $H$-cosets, the cosets are not in an arithmetic progression, and
      $$ 3|H|\le |2A|-|A|. $$
\end{itemize}
\end{theorem}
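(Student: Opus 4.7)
The plan is to combine Kneser's theorem applied to $2A$---which extracts a natural subgroup $H\le\Z_n$ stabilizing the sumset---with a Freiman-type analysis of the projection of $A$ to the quotient $\Z_n/H$. Cases (i)--(iii) in the conclusion correspond, respectively, to $A$ lying densely in a single $H$-coset, in several cosets that form an arithmetic progression, or in exactly three cosets that do not.

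As a first step I would set $H:=\{h\in\Z_n\colon h+2A=2A\}$, so that $2A+H=2A$ and Kneser's theorem yields $|2A|\ge 2|A+H|-|H|$. Writing $\bar A:=(A+H)/H\seq\Z_n/H$, the hypothesis $|2A|<\frac94|A|$ forces $|2\bar A|/|\bar A|$ to be close to $1$. If $|\bar A|\ge 4$, a Freiman-isomorphic rectification of $\bar A$ to the integers followed by the $3k-4$ theorem confines $\bar A$ to an arithmetic progression in $\Z_n/H$; inflating by $H$ yields case (ii). If $|\bar A|=3$ and the three cosets are not in AP, an extra coset appears in $2\bar A$ compared with the AP configuration, producing the sharper bound $|2A|-|A|\ge 3|H|$ of case (iii). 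When $|\bar A|\le 2$, one falls back into case (i) or a degenerate sub-case of (ii).

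When $|\bar A|=1$, so $A$ is contained in a single coset of $H$, the task reduces to securing the density bound $|A|>C^{-1}|H|$. If the density is already large enough, case (i) holds directly; otherwise I would iterate the argument, replacing $H$ with a strictly smaller subgroup $H'<H$ on which a translate of $A$ accumulates, and terminate once the density becomes acceptable. When instead the period of $2A$ is trivial so that $2A$ is aperiodic, I would rectify $A$ directly to a set of integers and apply a sharpened $3k-4$ theorem at threshold $\frac94$, placing $A$ inside a short arithmetic progression and yielding case (i) with $H=\Z_n$.

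The principal difficulty is controlling the explicit constant $C=3\cdot 10^4$ in case (i): a naive Freiman--Ruzsa iteration would compound density losses at each stage and produce a far worse constant, so the proof likely requires a direct Fourier-analytic identification of $H$ via the large-spectrum characters of $A$, rather than a recursive Kneser argument. A secondary delicate point is verifying that three cosets in non-AP position truly cost an extra $|H|$ in $|2A|-|A|$---this is precisely the extremal computation that pins the $9/4$ constant as the right threshold for the trichotomy.
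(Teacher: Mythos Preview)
Your proposal has a genuine gap at its core. The step ``rectify $A$ directly to a set of integers'' when $2A$ is aperiodic is precisely the content of the theorem, not a tool you can invoke: a subset of $\Z_n$ with small doubling need not be Freiman-isomorphic to a set of integers, and the $(3n-3)$ theorem applies only after rectification has been achieved by other means. There is also a computational slip in the periodic branch: with $H=\pi(2A)$ and $k=|\bar A|$, Kneser gives $|2\bar A|=2k-1$, so $|2\bar A|/|\bar A|=2-1/k$, not ``close to $1$''. What this does buy you is the critical-pair equality $|2\bar A|=2|\bar A|-1$ with $2\bar A$ aperiodic in $\Z_n/H$, but Kemperman's theorem then still allows non-progression configurations (elementary pairs of types~iii) and~iv)), and you have not said how to exclude them. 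More to the point, in the hard instances $2A$ is already aperiodic (the paper establishes this for a minimal counterexample, Lemma~\refl{2Aper}), so $H=\{0\}$, $\bar A=A$, and your Kneser decomposition is vacuous.

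The paper's route is substantially different. It reduces Theorem~\reft{main} to an inductive variant (Theorem~\reft{aux}) and argues by minimal counterexample. The Fourier input you mention as a device for tuning the constant $C$ is in fact the engine of the whole argument: a character-sum lemma (Lemma~\refl{DF}, combining Freiman's original idea with an energy/Katz--Koester refinement) shows that over $90\%$ of $A$ lies in a coset progression $P+L$ whose image in $\Z_n/L$ is rectifiable, and only then can the integer-type structure theorem (Theorem~\reft{small}) be applied to that $90\%$. Absorbing the remaining $10\%$ requires a lengthy combinatorial case analysis (Sections~\refs{twocoset}--\refs{final}) using Kemperman's theorem, Olson's lemma, and repeated appeals to minimality. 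Your outline names some of the right ingredients but inverts their logical order and treats the central difficulty---extracting rectifiability from small doubling in a cyclic group---as a routine preliminary.
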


We notice that the coefficient $\frac94$ in Theorem~\reft{main} is best
possible and indeed, the assumption $|2A|<\frac94\,|A|$ cannot be relaxed
even to $|2A|\le\frac94\,|A|$: for instance, if $n$ is large enough, and
$A=\{-1,0,1\}\cup\{a\}$ with $a\notin\{-3\longc 3\}$ and $2a\notin \{-2\longc
2\}$, then $|2A|=\frac94\,|A|$ while $A$ does not have the structure
described in Theorem~\reft{main}.

Theorem~\reft{main} improves the following result by Deshouillers and
Freiman.
\begin{theorem}[{\cite[Theorem~1]{b:df}}]\label{t:noDF}
Let $n$ be a positive integer. If a set $A\seq\Z_n$ satisfies $|2A|<2.04|A|$,
then one of the following holds:
\begin{itemize}
\item[i)] There is a subgroup $H\le\Z_n$ such that $A$ is contained in an
    $H$-coset and $|A|>10^{-9}|H|$.
\item[ii)] There is a proper subgroup $H<\Z_n$ and an arithmetic
    progression $P$ of size $|P|>1$ such that $A\seq P+H$ and
       $$ (|P|-1)|H|\le|2A|-|A|. $$
\item[iii)] There is a proper subgroup $H<\Z_n$ such that $A$ intersects
    exactly three $H$-cosets, the cosets are not in an arithmetic
    progression, and
      $$ 3|H|\le |2A|-|A|. $$
\end{itemize}
Moreover, in ii) and iii) there is an $H$-coset containing at least
$\frac23|H|$ elements of $A$.
\end{theorem}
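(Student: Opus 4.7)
The plan is to apply Kneser's theorem to locate a subgroup $H\le\Z_n$ controlling the periodicity of $2A$, analyze the image $\bar A\seq\Z_n/H$, and reduce to Freiman's $3k-4$ theorem in $\Z$ via rectification. The trichotomy (i)--(iii) tracks the size and shape of $\bar A$; the ``moreover'' clause drops out of a pigeonhole once Kneser's inequality is nearly tight.

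Set $H:=\{h\in\Z_n:2A+h=2A\}$ and $k:=|(A+H)/H|$, so $2A$ is a union of $H$-cosets and $|2A|=|2\bar A|\cdot|H|$. Kneser's theorem gives $|2A|\ge(2k-1)|H|$, which combined with $|2A|<2.04|A|\le 2.04k|H|$ forces $|A|/(k|H|)>(2k-1)/(2.04k)$. For $k\ge 2$ this average already exceeds $\tfrac23$, so some $H$-coset meets $A$ in at least $\tfrac23|H|$ elements, proving the final assertion in cases (ii)--(iii). If $k=1$, then $A$ lies in a single $H$-coset: if $|A|>10^{-9}|H|$ we are in case (i); if not, I would pass to a smaller subgroup $H'<H$ (say, the one generated by short elements of $A-A$) so that $A$ still lies in a single $H'$-coset, and iterate until one of the cases is forced. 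For $k=2$ the two cosets form a trivial AP and case (ii) with $|P|=2$ holds. For $k=3$, either the three cosets lie in an arithmetic progression, giving case (ii) with $|P|=3$, or they do not, in which case $|2\bar A|\ge 6$ together with $|A|\le 3|H|$ yields $|2A|-|A|\ge 6|H|-3|H|=3|H|$, i.e.\ case (iii).

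The substantive case is $k\ge 4$. Here I would lift $\bar A$ to $\Z$ by Freiman rectification: because $H$ is the full stabilizer of $2A$, the sumset $2\bar A$ is aperiodic in $\Z_n/H$, and provided the order of $\Z_n/H$ is much larger than $k$ (a regime one enters after a preliminary ``dense'' case-split: if $|A|$ is comparable to $n$, apply Kneser directly to conclude case (i) or (ii)), a standard Freiman $2$-isomorphism carries $\bar A$ to a set $A'\seq\Z$ with $|2A'|=|2\bar A|<2.04k$. Freiman's $3k-4$ theorem in $\Z$ then confines $A'$ in an arithmetic progression of length at most $|2A'|-|A'|+1$; pulling back gives $A\seq P+H$ with $(|P|-1)|H|\le|2A|-|A|$, which is case (ii).

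The hard part will be the rectification step down to doubling $2.04$: this is close to the natural threshold of $3$ in Freiman's theorem, and both the existence of the Freiman $2$-isomorphism and the termination of the $k=1$ iteration (where one repeatedly shrinks $H$) have to be controlled quantitatively in tandem. It is precisely the accumulation of density losses across these two reductions that produces the very weak constant $10^{-9}$ in conclusion (i); the threshold $2.04$ in turn is what makes all the quantitative estimates (Kneser tightness, rectification range, $3k-4$ bound) hold simultaneously with room to spare.
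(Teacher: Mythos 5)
First, a point of orientation: the paper does not prove Theorem~\reft{noDF} at all --- it is quoted verbatim from Deshouillers--Freiman \cite[Theorem~1]{b:df} as the prior result that the paper improves. So there is no ``paper's own proof'' to compare against; what can be compared is your skeleton versus the method that both \refb{df} and the present paper use for the stronger statements. At that level your skeleton is the right one (pass to $H=\pi(2A)$, apply Kneser in the quotient, handle few-coset cases by hand, rectify, invoke Freiman's $(3n-3)$-theorem), and your derivation of the ``moreover'' clause from near-tightness of Kneser's bound is correct and clean.

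The genuine gap is the rectification step for $k\ge4$, which you flag as ``the hard part'' but then dispose of with ``a standard Freiman $2$-isomorphism carries $\bar A$ to a set $A'\seq\Z$.'' No such standard isomorphism exists: a $k$-element subset of $\Z_m$ with $m\gg k$ and doubling below $2.04$ (or $9/4$) is not automatically Freiman-isomorphic to a set of integers, and establishing that it is (more precisely, that it sits inside an arithmetic progression of at most $(m+1)/2$ terms, after which rectifiability is easy) is precisely the content of the theorem. In \refb{df} and in Section~\refs{charsum} here this is done by a character-sum argument showing that a large Fourier coefficient exists, followed by Lemma~\refl{Fpoints} to place $90\%$ of $A$ in a half-arc, followed by a long combinatorial analysis (Sections~\refs{minX}--\refs{final} here) to absorb the remaining $10\%$; the constant $10^{-9}$ comes out of that machinery, not out of an iteration over shrinking subgroups. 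Two smaller defects: (a) in the case $k=3$ with the three cosets not in arithmetic progression you assert $|2\bar A|\ge6$, but Lemma~\refl{3cyclic}~iii) exhibits the exception $\bar A=\{a,a+d,b\}$ with $d$ the involution of $\Z_n/H$, where $|2\bar A|=5$ and $\bar A$ is neither a coset nor a progression; one must then enlarge $H$ to $\<H,d\>$ and land in case~ii) instead of case~iii). (b) Your $k=1$ branch is both unnecessary and ill-posed: if $A$ lies in a single coset of $H=\pi(2A)$, then $2A$ is a full $H$-coset, so $|A|\ge|H|/2.04$ and case~i) holds outright; the proposed iteration over ``subgroups generated by short elements of $A-A$'' has no termination or density control and could not be made to work as described.
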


\begin{remark}
In ii) and iii), we have $|2A|\ge |H|+|A|$, which establishes properness of
$H$ as an immediate consequence of the other assertions of the theorem. In
the same vein, the existence of an $H$-coset containing  at least
$\frac23|H|$ elements of $A$ is immediate in the case iii,) and also in the
case ii) provided that $|P|\ge 6$: say, in the latter case, letting
$\tau:=|2A|/|A|<\frac94$ and averaging over all $H$-cosets contained in
$P+H$, we get
\begin{multline*}
  \max\{|A\cap(z+H)|\colon z\in P\}
       \ge \frac{|A|}{|P|} = \frac{|2A|-|A|}{(\tau-1)|P|} \\
             \ge \frac{(|P|-1)|H|}{(\tau-1)|P|}
                 = \left( 1- \frac1{|P|} \right) \frac{1}{\tau-1}\,|H|
                          > \frac56\,\frac45\,|H| = \frac23|H|.
\end{multline*}
\end{remark}

A version of Theorem~\reft{noDF} was proved by Balasubramanian and
Pandey~\cite[Theorem~2]{b:bp} who have, essentially, improved the coefficient
from $2.04$ to $2.1$ under some extra assumptions.

Two other classical results which Theorems~\reft{main} and~\reft{noDF} are
worth comparing with are Kneser's theorem and Freiman's $(3n-3)$-theorem; see
Sections~\refs{kkr} and~\refs{aux} for the exact statements and the
references. Kneser's result classifies small-doubling sets in arbitrary
abelian groups, including sets densely contained in cosets, but requires the
doubling coefficient $|2A|/|A|$ to be smaller than $2$. The $(3n-3)$-theorem,
on the other hand, allows the doubling coefficient to be as large as
$3-o(1)$, but assumes the underlying group to be torsion-free; specifically,
it says that if $A$ is a finite subset of a torsion-free abelian group such
that $|2A|\le 3|A|-4$, then $A$ is contained in an arithmetic progression $P$
with $|P|-1\le|2A|-|A|$. Both Kneser's and Freiman's theorem are employed in
our argument.

The proof of Theorem~\reft{main} is inductive, and for the induction to go
through, we actually prove the following version of the theorem.

\begin{theorem}\label{t:aux}
Let $n$ be a positive integer. If a set $A\seq\Z_n$ is not contained in a
proper coset and satisfies $|2A|<\min\{\frac94|A|,n\}$, then one of the
following holds:
\begin{itemize}
\item[i)]  $|2A|-|A|\ge C_0^{-1}n$ where $C_0=2.4\cdot10^4$. 
\item[ii)] There are a proper subgroup $H<\Z_n$ and an arithmetic
    progression $P$ of size $|P|>1$ such that $A\seq P+H$ and
       $$ (|P|-1)|H|\le|2A|-|A|. $$
\item[iii)] There is a proper subgroup $H<\Z_n$ such that $A$ intersects
    exactly three $H$-cosets, the cosets are not in an arithmetic
    progression, and
      $$ 3|H|\le |2A|-|A|. $$
\end{itemize}
\end{theorem}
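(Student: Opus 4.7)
The plan is to argue by induction on $n$, splitting the analysis according to the stabilizer $H:=\{h\in\Z_n:h+2A=2A\}$ of the sumset. Since $|2A|<n$, this $H$ is a proper subgroup, and Kneser's theorem drives both the main dichotomy and the reduction.

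\textbf{Nontrivial stabilizer.} Suppose $|H|>1$. Then $2A$ is a union of $H$-cosets, and the image $\bar A$ of $A$ in the cyclic quotient $\Z_n/H$ (of order $n/|H|<n$) satisfies $|2\bar A|=|2A|/|H|$. One checks that $\bar A$ still fits the hypotheses inside $\Z_n/H$: $|2\bar A|<\min\{\tfrac{9}{4}|\bar A|,\,n/|H|\}$ follows from $|A|\le|H|\,|\bar A|$, and $\bar A$ is not contained in a proper coset of $\Z_n/H$, since subgroups of the quotient correspond to subgroups $K\seq\Z_n$ with $K\supseteq H$, and a lift of such a coset would place $A$ inside a proper coset of $\Z_n$. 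Applying the inductive hypothesis to $\bar A$: in case (i) we obtain $|2\bar A|-|\bar A|\ge(n/|H|)/C_0$, and combined with $|2A|-|A|\ge|H|(|2\bar A|-|\bar A|)$ (which follows from $|2A|=|H|\,|2\bar A|$ and $|A|\le|H|\,|\bar A|$) this yields $|2A|-|A|\ge n/C_0$; in cases (ii) and (iii), the subgroup and arithmetic progression produced in the quotient lift to a proper subgroup $K\supsetneq H$ of $\Z_n$ and an AP $P\seq\Z_n$ of the same length with $A\seq P+K$, and the structural inequalities transfer since both sides scale by $|H|$.

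\textbf{Trivial stabilizer.} Suppose $H=\{0\}$. Kneser's theorem gives $|2A|\ge 2|A|-1$, so $2|A|-1\le|2A|<\tfrac{9}{4}|A|$. The strategy is rectification: if $|A|<c_0 n$ for a suitable constant $c_0$, then $A$ admits a Freiman $2$-isomorphism with a subset of $\Z$, and Freiman's $(3|A|-4)$-theorem (applicable since $|2A|<\tfrac{9}{4}|A|<3|A|-4$ once $|A|$ exceeds a small constant) produces an arithmetic progression of length at most $|2A|-|A|+1$ containing the lifted set. Projecting back yields $A\seq P$ for an AP $P\seq\Z_n$ with $|P|-1\le|2A|-|A|$, which is conclusion (ii) with trivial subgroup. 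Otherwise $|A|\ge c_0 n$, and then $|2A|-|A|\ge|A|-1\ge c_0 n-1$; choosing $c_0$ large enough (comfortably exceeding $1/C_0$) forces conclusion (i).

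\textbf{Main obstacle.} The critical case is the trivial-stabilizer one: the numerical alignment between the rectification threshold $c_0$ and the quantitative Freiman input must be sharp enough to honor the explicit $C_0=2.4\cdot10^4$, and obtaining rectification for sets of doubling just under $\tfrac{9}{4}$ is the decisive quantitative ingredient. Low-cardinality sets (where the $(3|A|-4)$-theorem does not apply) need to be dispatched by direct Kneser-based bookkeeping, as do the cases where the Freiman-derived progression $P$ has length too small to produce a useful conclusion (ii) and one must enlarge $H$ instead. A secondary subtlety is that the inductive pullback in the nontrivial-stabilizer case relies on $|A|\le|H|\,|\bar A|$ rather than equality, so the structural inequalities for $A$ follow from those for $\bar A$ only after verifying the sharpened form $|2A|-|A|\ge|H|(|2\bar A|-|\bar A|)$ noted above.
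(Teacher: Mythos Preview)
Your reduction in the nontrivial-stabilizer case is sound and is essentially what the paper does (its Lemmas~\refl{MofA} and~\refl{2Aper} encode exactly this pass to the quotient). The genuine gap is in the trivial-stabilizer case: the claim that $|A|<c_0 n$ forces $A$ to be Freiman $2$-isomorphic to a set of integers is false for composite $n$, no matter how small $c_0$ is. Take $n=mk$ with $k$ large, let $H\le\Z_n$ have order $m$, pick $x$ with $\phi_H(x)$ a generator of $\Z_n/H$, and set $A=H\cup\{x\}$. Then $2A=H\cup(x+H)\cup\{2x\}$ has $|2A|=2m+1$; choosing $m,k$ with $\gcd(2m+1,mk)=1$ (e.g.\ $m=3$, $k=5$) makes $2A$ aperiodic, and $|2A|/|A|=(2m+1)/(m+1)<9/4$ for $m\ge3$. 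Yet $A$ contains a full $H$-coset, so it cannot be rectifiable: a $2$-isomorphic image in $\Z$ would contain a three-element set $B$ with $|2B|=3$, impossible. This $A$ is not a counterexample to the theorem (it satisfies~(ii) with the two-term progression $\{0,x\}$ and subgroup $H$), but your argument has no mechanism to detect that structure; you would be stuck at the rectification step.

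This is not a quantitative issue that a better choice of $c_0$ fixes: in composite-order cyclic groups, aperiodic small-doubling sets can sit densely on cosets of small subgroups that are invisible to $\pi(2A)$. The paper's route is accordingly quite different from a one-shot rectification. A character-sum estimate (Section~\refs{charsum}) shows only that at least $90\%$ of $A$ lies in a coset progression $P+L$ with $\phi_L$-image rectifiable; Proposition~\refp{combo} (resting on an external structure theorem for $\Z\times F$) handles that rectifiable part; and the remaining $\le10\%$ is absorbed by a long combinatorial case analysis (Sections~\refs{twocoset}, \refs{threecosets}, \refs{final}) according to how many $L$-cosets the good part meets. Your outline collapses all of this into a rectification step that simply does not exist in $\Z_n$ for general $n$.
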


\begin{proof}[Deduction of Theorem~\reft{main} from Theorem~\reft{aux}]
Suppose that $A\seq\Z_n$ satisfies $|2A|<\frac94\,|A|$ and, without loss of
generality, assume also that $0\in A$ and $|A|\ge 2$. Let $L\le\Z_n$ be the
subgroup generated by $A$.

If $|2A|=|L|$, then $|A|>\frac49\,|2A|=\frac49\,|L|$; thus, $A$ has the
structure of Theorem~\reft{main}~i). Assuming now that $|2A|<|L|$, we apply
Theorem~\reft{aux} to the set $A$ with $L$ (instead of $\Z_n$) as the
underlying group, and consider two possible cases.

If $A\sbs L$ satisfies the inequality of Theorem~\reft{aux}~i), then
$C_0^{-1}|L|\le|2A|-|A|<\frac54|A|$, so that $|A|>(4/5C_0)|L|=C^{-1}|L|$;
this is case~i) of Theorem~\reft{main}.

On the other hand, it is clear that Theorem~\reft{aux}~ii) implies
Theorem~\reft{main}~ii), and similarly for Theorem~\reft{aux}~iii).
\end{proof}

We thus focus on the proof of Theorem~\reft{aux}; once it is completed,
Theorem~\reft{main} will follow.

As explained above, the coefficient $9/4$ of Theorem~\reft{main} cannot be
replaced with a larger one. However, it is plausible to expect that the
following can be true.

\begin{conjecture}\label{j:relax}
For any $\eps>0$ there exist positive constants $C_1(\eps)$ and $C_2(\eps)$
such that if $n$ is a positive integer, and $A\seq\Z_n$ satisfies
$|A|<(C_1(\eps))^{-1}n$ and $|2A|<(3-\eps)\,|A|$, then there are a subset
$P\seq\Z_n$ with $|2P|/|P|\le|2A|/|A|$, and a proper subgroup $H<\Z_n$ such
that $A\seq P+H$,
       $$ (|2P|-|P|)|H|\le|2A|-|A|, $$
and either $|P|\le C_2(\eps)$, or $P$ is an arithmetic progression.
\end{conjecture}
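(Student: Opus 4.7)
The plan is to split by doubling ratio $\tau := |2A|/|A|$. For $\tau < 9/4$ the conclusion follows from Theorem~\reft{main} applied to $A\sbs\Z_n$: in case~i) take $P = \{a_0\}$ for some $a_0\in A$, giving $|2P|/|P|=1\le\tau$, $(|2P|-|P|)|H|=0\le|2A|-|A|$, and $|P|=1\le C_2(\eps)$; in case~ii) take $P$ to be the given arithmetic progression, so that $|2P|/|P|\le 2-1/|P|$, and observe $\tau\ge 2-1/|P|$ from $(|P|-1)|H|\le|2A|-|A|$ combined with $|A|\le|P|\,|H|$; in case~iii) take $P$ to be a transversal of the three $H$-cosets, so that $|P|=3\le C_2(\eps)$ and $|2P|\le 6$, giving $(|2P|-|P|)|H|\le 3|H|\le|2A|-|A|$, while $|2P|/|P|\le 2\le\tau$ follows from $|A|\le 3|H|$ together with $3|H|\le|2A|-|A|$. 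Choosing $C_1(\eps)\ge C$ forces $H$ to be proper in case~i), since otherwise $|A|>C^{-1}n\ge C_1(\eps)^{-1}n$, contradicting the density hypothesis.

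For the genuine range $9/4\le\tau<3-\eps$ I would proceed by induction on $n$. Let $H_0\le\Z_n$ be the period of $2A$. If $H_0$ is nontrivial, then the quotient image $\o A\seq\Z_n/H_0$ has doubling $|2\o A|/|\o A|\le\tau$; applying the inductive hypothesis (or Theorem~\reft{main} if the quotient doubling drops below $9/4$) yields a cover $\o A\seq\o P+\o K$ of the required form, which lifts under the quotient map to $A\seq P+H$ with $P$ a lift of $\o P$ and $H$ the preimage of $\o K$, satisfying $(|2P|-|P|)|H|\le|2A|-|A|$ via $|2A|=|2\o A|\cdot|H_0|$ and $|A|\le|\o A|\cdot|H_0|$. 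If $H_0=\{0\}$, then $2A$ is aperiodic, and Pl\"unnecke--Ruzsa gives $|mA-\ell A|\le\tau^{m+\ell}|A|$. Provided $C_1(\eps)$ is chosen sufficiently large (a constant depending only on $\eps$, by Ruzsa's rectification theorem), $A$ is Freiman $2$-isomorphic to a set $A'\seq\Z$ with $|2A'|=|2A|$, and Freiman's $(3k-4)$-theorem applied to $A'$---valid once $|A'|\ge 4/\eps$, since then $|2A'|<(3-\eps)|A'|<3|A'|-3$---yields an arithmetic progression $P'\sbs\Z$ containing $A'$ with $|P'|-1\le|2A'|-|A'|$; pulling $P'$ back through the rectification gives the required $P\sbs\Z_n$ with $H=\{0\}$. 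The boundary $|A|<4/\eps$ is absorbed by taking $P=A$, $H=\{0\}$, so that $|P|\le 4/\eps\le C_2(\eps)$.

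The main obstacle will be the rectification step in the quantitative form required. Ruzsa's rectification provides Freiman $2$-isomorphism to $\Z$ under a density hypothesis of shape $|A|\le n/\tau^{c}$ for a fixed constant $c$, which degrades as $\tau\to 3$; turning this into a clean dependence of $C_1(\eps)$ on $\eps$ requires care and may need a refined rectification argument via Bohr sets. A secondary issue is that Freiman's $(3k-4)$-theorem produces an arithmetic progression $P$ without a priori control over $|2P|/|P|$; one has to verify this bound separately from $|P|-1\le|2A|-|A|$ together with the lower bound $\tau\ge 9/4>2$ operative in the nontrivial range, which yields $|2P|/|P|=2-1/|P|<\tau$. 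Finally, threading the stabilizer induction through the Freiman-theoretic step requires checking that the lifted $P$ from the quotient still meets the doubling condition $|2P|/|P|\le\tau$, which follows from the corresponding bound in $\Z_n/H_0$ and the fact that the natural lift of an arithmetic progression is an arithmetic progression of the same length.
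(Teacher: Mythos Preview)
This statement is Conjecture~\refj{relax}, which the paper explicitly leaves open; there is no proof in the paper to compare against. Your handling of the range $\tau<9/4$ via Theorem~\reft{main} is fine, but the argument for $9/4\le\tau<3-\eps$ has a structural gap at the rectification step. You claim that aperiodicity of $2A$ together with small density forces $A$ to be Freiman $2$-isomorphic to an integer set, citing Ruzsa rectification. That theorem, however, is a statement about $\Z_p$ with $p$ prime; for composite $n$ it is false. Take $n=6m$ with $m\ge 17$ prime and $A=\{0,1,2,m,2m,3m\}\subset\Z_n$: then $|A|=6$, $|2A|=16$, $\tau=8/3\in[9/4,3)$, and one checks directly that $2A$ is aperiodic; yet $A$ is not rectifiable, since $0,3m\in A$ are distinct elements with $2\cdot 0=2\cdot 3m$. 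The density $|A|/n=1/m$ is arbitrarily small, so no choice of $C_1(\eps)$ helps. The obstacle you flag as merely quantitative is in fact qualitative: aperiodicity of $2A$ says nothing about torsion in $A-A$, and torsion is the actual obstruction to embedding into $\Z$.

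There is a second, independent issue in the periodic branch. Passing to $\o{A}=\phi_{H_0}(A)$ replaces the density $|A|/n$ by $|A+H_0|/n$, which can exceed the original by a factor approaching $3$, so the inductive hypothesis $|\o{A}|<C_1(\eps)^{-1}\,|\Z_n/H_0|$ need not hold. An induction that loses a bounded factor at every step cannot deliver a uniform $C_1(\eps)$. These two obstacles---failure of rectification in the presence of small subgroups, and density loss under quotients---are precisely why the conjecture remains open beyond the threshold $9/4$ established in Theorem~\reft{main}.
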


We remark that the inequality $|2P|/|P|\le|2A|/|A|$ follows in fact from the
other assertions:
\begin{multline*}
  |A|\,\left(\frac{|2P|}{|P|}-1\right)
          \le |P||H| \,\left(\frac{|2P|}{|P|}-1\right) \\
                = (|2P|-|P|)|H| \le |2A|-|A|
                      = |A|\,\left(\frac{|2A|}{|A|}-1\right).
\end{multline*}

Theorem~\reft{main} and Conjecture~\refj{relax} show that any set with the
small doubling coefficient is, essentially, obtained by ``lifting'' a
small-doubling set which is either nicely structured (an arithmetic
progression), or otherwise belongs to a finite collection of sporadic
examples.

Our argument follows the general line of reasoning introduced by Freiman
in~\refb{f1} and then pursued by other authors; namely, we use character sums
to conclude that small doubling leads to a biased distribution, and then use
the bias as a starting point for a combinatorial part of the proof. The
improvements come from a refinement in the character sums component, in the
spirit of~\refb{ls}; from replacing the main auxiliary result used in
Deshouillers-Freiman \cite[Theorem~2]{b:df} with its stronger
version~\cite[Theorem~2]{b:l1}, see Section~\refs{rectif}; and, finally, from
using an intricate combinatorial analysis.

The rest of the paper is structured as follows. In the next section we
introduce notation that will be used throughout and considered standard. In
Section~\refs{rectif} we prove Theorem~\reft{aux} in the special case where
the image of the small-doubling set under a suitable homomorphism is
\emph{rectifiable}; although this  case is of principal importance, the proof
is, essentially, just a reduction to~\cite[Theorem~2]{b:l1}. In
Section~\refs{kkr} we present Kneser's theorem and a relaxed version of
Kemperman's theorem. In Section~\refs{VSDS} we establish a number of
properties of the sets with a ``very small'' doubling coefficients, including
the asymmetric case. Some other general results on set addition in abelian
groups, mostly of combinatorial nature, are gathered in Section~\refs{aux}.
Section~\refs{minX} establishes a number of results about the minimal
counterexample set (which, as we eventually show, does not exist). Two more
results of this sort, Lemmas~\refl{twocoset}, and~\refl{threecosets}, show
that the minimum counterexample set, if exists, meets at least four cosets of
any subgroup, with the obvious exceptions. The two lemmas are singled out
into dedicated Sections~\refs{twocoset} and~\refs{threecosets}. Their proofs
are quite technical and some readers may prefer to skip the details and
proceed to Section~\refs{charsum} where the character sum component of the
argument is presented. The proof is completed in the final
Section~\refs{final}.

\mysection{Notation}

Let $G$ be an abelian group.
\subsection{Groups}

By $A+B$ we denote the Minkowski sum of the sets $A,B\seq G$; that is,
$A+B=\{a+b\colon a\in A,\ b\in B\}$.

For a subgroup $H\le G$, by $\phi_H$ we denote the canonical homomorphism
$G\to G/H$; thus, for instance, with $\Z$ denoting the group of integers, we
have $\Z_n=\phi_{n\Z}(\Z)$. For $g_1,g_2\in G$, we may occasionally write
$g_1\equiv g_2\pmod H$ as an alternative to $g_1-g_2\in H$, $g_1+H=g_2+H$, or
$\phi_H(g_1)=\phi_H(g_2)$.

The \emph{period} (or \emph{stabilizer}) of a subset $S\seq G$ is the
subgroup $\pi(S):=\{g\in G\colon S+g=S\}\le G$, and $S$ is \emph{periodic} or
\emph{aperiodic} according to whether $\pi(S)\neq\{0\}$ or $\pi(S)=\{0\}$.

The \emph{index} of a subgroup $H\le G$, denoted $[G:H]$, is the size of the
quotient group $G/H$; thus, if $G$ is finite, then $[G:H]=|G|/|H|$.

We say that a coset $g+H$ is \emph{determined} by a subset $A\seq G$ if the
intersection $A\cap(g+H)$ is nonempty. In this case we also say that $A$
\emph{meets}, or \emph{intersects}, $g+H$.

An \emph{involution} of $G$ is an element $g\in G$ of order $2$. Importantly,
a cyclic group has at most one involution.

A finite subset $A$ of an abelian group will be called a
\emph{very-small-doubling set} (VSDS for short) if $|A|<\frac32\,|A|$;
equivalently, if $A$ is contained in a finite coset with density exceeding
$2/3$, see Section~\refs{VSDS}.

\subsection{Progressions}

For an integer $N\ge 2$, an $N$-term arithmetic progression in $G$ with the
difference $d\in G$ and the initial term $g\in G$ is a subset of $G$ of the
form $P=\{g,g+d\longc g+(N-1)d\}$; thus, for instance, cosets of finite
nonzero subgroups are considered arithmetic progressions, while singletons
are not. A progression is \emph{primitive} if its difference generates $G$.

For real $u\le v$, by $[u,v]$ we denote both the set of all integers $z$
satisfying $u\le z\le v$, and the image of this set under the canonical
homomorphism $\phi_{n\Z}$ from the group of integers to the cyclic group
under consideration.

\subsection{Local isomorphism and rectification}

We say that a subset $S\seq G$ is \emph{rectifiable} if it is \emph{locally
isomorphic} (or \emph{Freiman-isomorphic}) to a set of integers; that is, if
there is a mapping $\lam\colon S\to\Z$ such that for any $s_1\longc s_4\in
S$, we have $s_1+s_2=s_3+s_4$ if and only if
$\lam(s_1)+\lam(s_2)=\lam(s_3)+\lam(s_4$). Taking $s_1=s_3$, we see that
$\lam$ is injective; hence, $|\lam(S)|=|S|$. It is equally easy to see that
$|2\lam(S)|=|2S|$.

If $d\in G$ is an element of order $N\ge 2$, then any arithmetic progression
with the difference $d$, and with at most $(N+1)/2$ terms, is rectifiable.
Indeed, this is the only kind of rectifiable sets that actually appear below.

\subsection{Regularity}

For an integer $k\ge 2$, we say that a set $A\seq\Z_n$ is \emph{$k$-regular}
if it has the structure of Theorem~\reft{aux}~ii) with a $k$-element
progression $P$, and that $A$ is \emph{singular} if it has the structure of
Theorem~\reft{aux}~iii). Thus, Theorem~\reft{aux} essentially says that any
small-doubling set $A\seq\Z_n$ which is not densely contained in a coset is
either regular or singular.

\mysection{Theorem~\reft{aux} for rectifiable sets}\label{s:rectif}

One of the key ingredients of our argument is the following refinement of
\cite[Theorem~2]{b:df}.

\begin{theorem}[{\cite[Theorem~2]{b:l0}}]\label{t:small}
Suppose that $F$ is a finite group, and that $A$ is a finite subset of the
group $G=\Z\times F$. Let $s$ be the number of elements of the image of $A$
under the projection $G\to\Z$ along $F$. If $|2A|<3\big(1-\frac1s\big)|A|$,
then there exist an arithmetic progression $P\seq G$ of size $|P|\ge 3$ and a
subgroup $H\le\{0\}\times F$ such that $|P+H|=|P||H|$, $A\seq P+H$, and
$(|P|-1)|H|\le|2A|-|A|$.
\end{theorem}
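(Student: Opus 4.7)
The plan is to combine Freiman's $(3k-4)$-theorem, applied to the image of $A$ in $\Z$ under the projection along $F$, with a Kneser-type analysis that aligns the fibers of $A$ over that image to a common subgroup of $F$. Write $A'\seq\Z$ for the image of $A$ under the projection $G\to\Z$, so that $|A'|=s$, and for each $i\in A'$ let $A_i:=\{f\in F\colon (i,f)\in A\}$, giving $|A|=\sum_{i\in A'}|A_i|$. The decomposition
\[
  2A \;=\; \bigsqcup_{c\in 2A'}\{c\}\times\Bigl(\bigcup_{\substack{i+j=c\\ i,j\in A'}}(A_i+A_j)\Bigr)
\]
is the starting point for everything that follows, since it couples $|2A|$ to both the doubling of $A'$ in $\Z$ and the Minkowski sums of the fibers in $F$.

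First I would extract from the hypothesis a bound of the shape $|2A'|\le 3s-4$, using that each fiber of $2A$ contains at least one element while the extremal fibers are substantially larger than one. Freiman's $(3k-4)$-theorem in $\Z$ then places $A'$ inside an arithmetic progression $Q\seq\Z$ with $|Q|\le|2A'|-s+1$. Next, I would analyse the fiber sums $A_i+A_j$ via Kneser's theorem in $F$, denoting their stabilizers by $H_{ij}\le F$. Bounding $|2A|$ below by the contributions of the extremal fibers $2A_{i_{\min}}$, $A_{i_{\min}}+A_{i_{\max}}$ and $2A_{i_{\max}}$ (which sit in distinct fibers of $2A$), together with the ``bulk'' contributions of the intermediate fibers, and comparing with the upper bound $|2A|<3(1-1/s)|A|$, should force the stabilizers $H_{ij}$ to coincide with a single subgroup $H\le F$ and each $A_i$ to lie in a single $H$-coset $h_i+H$.

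With the common subgroup $H$ in hand, I would pass to the quotient $G/(\{0\}\times H)\cong\Z\times(F/H)$. The image of $A$ there is the graph $\{(i,h_i+H)\colon i\in A'\}$ of size $s$, whose doubling is controlled by $(|2A|-|A|)/|H|$; a further application of the $\Z$-case (or a direct extremal argument coming out of the previous step) then shows this graph is a genuine arithmetic progression $P\seq G$ of length $|P|\ge 3$. By construction $A\seq P+H$ and $|P+H|=|P||H|$, and counting the $(\{0\}\times H)$-cosets that $2A$ occupies inside $2P+H$ yields $(|P|-1)|H|\le|2A|-|A|$.

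The main obstacle is the alignment step. Since the coefficient $3(1-1/s)$ in the hypothesis is tight, even a single mismatch $H_{ij}\ne H_{i'j'}$ would prevent the extremal inequalities above from closing; ruling out such mismatches calls for a delicate asymmetric Kneser or Pl\"unnecke-type argument that carefully tracks how the fibers $A_i$ interact across different levels, and this is where I would expect to spend most of the effort.
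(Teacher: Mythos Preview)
The paper does not prove this theorem. It is quoted as a black box from an external reference (the citation reads \cite[Theorem~2]{b:l0}, and the remark immediately following the statement points to~\refb{l1}). The only argument the paper supplies is the brief observation that the equality $|P+H|=|P||H|$ is a consequence of the remaining assertions: since $A\seq P+H$ projects onto $s\ge2$ distinct integers, the common difference of $P$ cannot lie in $\{0\}\times F$, hence has infinite order, so distinct terms of $P$ fall into distinct $H$-cosets. Beyond that remark, there is nothing here to compare your outline against.

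As a sketch of how the cited theorem might be established, your plan is reasonable in its broad shape---project to $\Z$, apply Freiman's $(3n-4)$ theorem to the image, and control the fibers via Kneser---but it is only a plan, not a proof. You correctly identify the alignment step (forcing all the stabilisers $H_{ij}$ to agree with a single $H$ and each $A_i$ to lie in one $H$-coset) as the crux, and then stop. Your assertion that ``even a single mismatch $H_{ij}\ne H_{i'j'}$ would prevent the extremal inequalities above from closing'' is exactly what has to be \emph{proved}, and it does not follow from anything you have written; the coefficient $3(1-1/s)$ being tight is suggestive but not an argument. Likewise, once you have a common $H$, the claim that the quotient image $\{(i,h_i+H)\}$ is itself an arithmetic progression requires work---it is a set of size $s$ in $\Z\times(F/H)$, not in $\Z$, so ``a further application of the $\Z$-case'' is not available without additional structure. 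If you intend to supply an independent proof, those two points are where the real content must go.
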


We remark that the equality $|P+H|=|P||H|$ (which is somewhat implicit
in~\refb{l1}) is an easy consequence of the other assertions, as  it follows
by considering the difference of $P$. The difference cannot be contained in
the subgroup $\{0\}\times F$, since in this case $P$, and therefore also
$P+H$ and $A\seq P+H$, would be contained in a coset of $\{0\}\times F$,
leading to $s=1$ and thus contradicting the assumption
$|2A|<3\big(1-\frac1s\big)|A|$. Thus, the difference is of infinite order,
and therefore the difference of any two distinct elements of $P$ is of
infinite order, too, and does not belong to the finite subgroup $H$.

The following result establishes Theorem~\reft{aux} in the special case where
the image of $A$ under a suitable homomorphism is sufficiently large and
rectifiable.
\begin{proposition}\label{p:combo}
Suppose that $n$ is a positive integer, $L\le\Z_n$ is a subgroup, and
$A\seq\Z_n$ is a subset with $\phi_L(A)$ rectifiable. If $|2A|<3(1-1/s)|A|$,
where $s=|\phi_L(A)|$, then there exist an arithmetic progression $P\seq\Z_n$
and a proper subgroup $H<\Z_n$ such that $A\seq P+H$, $|P+H|=|P||H|$, and
$(|P|-1)|H|\le|2A|-|A|$.
\end{proposition}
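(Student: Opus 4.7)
The plan is to Freiman-lift $A$ into the ambient group $G':=\Z\times L$, apply Theorem~\reft{small} there, and transport the structural conclusion back to $\Z_n$ via an affine map. First, rectifiability provides a Freiman isomorphism $\lambda\colon\phi_L(A)\to\Z$. In the only case that arises in the sequel, $\phi_L(A)=\{b_0+i\bar d:0\le i<s\}$ is an arithmetic progression of length $s\le(N+1)/2$ with $N=\ord(\bar d)$, and $\lambda$ is the normalization $\lambda(b_0+i\bar d)=i$. Fixing lifts $\sigma(b_0)\in\Z_n$ of $b_0$ and $\bar d_0\in\Z_n$ of $\bar d$, and setting $\sigma(b_0+i\bar d):=\sigma(b_0)+i\bar d_0$, produces a Freiman-$2$-homomorphic section $\sigma\colon\phi_L(A)\to\Z_n$: that is, $\phi_L\circ\sigma=\mathrm{id}$, and $b_1+b_2=b_3+b_4$ in $\phi_L(A)$ implies $\sigma(b_1)+\sigma(b_2)=\sigma(b_3)+\sigma(b_4)$ in $\Z_n$. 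Define
\[ \Lambda(a):=\bigl(\lambda(\phi_L(a)),\,a-\sigma(\phi_L(a))\bigr)\in G',\qquad \tilde A:=\Lambda(A). \]
The rectification property of $\lambda$ combined with the $2$-homomorphy of $\sigma$ makes $\Lambda$ a Freiman $2$-isomorphism onto $\tilde A$, so $|\tilde A|=|A|$, $|2\tilde A|=|2A|$, and the projection of $\tilde A$ to $\Z$ has $s$ elements.

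Since $|2\tilde A|/|\tilde A|<3(1-1/s)$, Theorem~\reft{small} applied to $\tilde A\seq G'$ with $F=L$ yields an arithmetic progression $\tilde P=\{(x_0+id_1,\,y_0+id_2):0\le i<k\}\seq G'$ of length $k\ge 3$ and a subgroup $\tilde H\le\{0\}\times L$ with $\tilde A\seq\tilde P+\tilde H$, $|\tilde P+\tilde H|=|\tilde P||\tilde H|$, and $(k-1)|\tilde H|\le|2A|-|A|$. Since $s\ge 2$ (else the hypothesis is vacuous), $d_1\ne 0$; the projection of $\tilde A$ to $\Z$ being the consecutive block $\{0,1,\ldots,s-1\}$ forces $|d_1|=1$, and after sign inversion we assume $d_1=1$. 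Identify $\tilde H$ with a subgroup $H\le L<\Z_n$, necessarily proper.

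Finally, set $\bar p:=\bar d_0+d_2\in\Z_n$ and $p_0:=\sigma(b_0)+y_0-x_0d_2\in\Z_n$. Unpacking the containment $\Lambda(A)\seq\tilde P+\tilde H$ shows that every $a\in A$ with $\phi_L(a)=b_0+j\bar d$ satisfies $a=p_0+j\bar p+h$ for some $h\in H$; hence $A\seq P+H$ with $P:=\{p_0+j\bar p:0\le j<k\}$, and the inequality $(|P|-1)|H|\le|2A|-|A|$ is inherited from the lift. The main obstacle is verifying $|P+H|=|P||H|$, equivalently that $\bar p$ has order at least $|P|$ in $\Z_n/H$: since $\phi_L(\bar p)=\bar d$ has order $N$ in $\Z_n/L$, any unwanted collision $i\bar p\in H$ with $1\le i<|P|$ would force $N\mid i$. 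The rectifiability bound $s\le(N+1)/2$ together with Theorem~\reft{small}'s relation between $k$ and the doubling of $A$ is used to rule this out, if necessary by shortening $P$ to a sub-AP of length $\min(k,N)$; the bound $(|P|-1)|H|\le|2A|-|A|$ is preserved under shortening.
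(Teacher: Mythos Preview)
Your overall strategy---lift $A$ via a Freiman isomorphism into $\Z\times F$, apply Theorem~\reft{small}, and push the conclusion back to $\Z_n$---matches the paper's. But your choice of fibre $F=L$ forces you to build a Freiman-$2$-homomorphic section $\sigma\colon\phi_L(A)\to\Z_n$, and you only construct $\sigma$ under the extra assumption that $\phi_L(A)$ is \emph{itself} an arithmetic progression. That is not what the proposition assumes (it assumes only rectifiability), and it is not what happens in the paper's applications either: in Section~\refs{final} the image $\phi_L(A')$ is merely contained in a short progression, not equal to one. Once $\lambda(\phi_L(A))$ fails to be a block of consecutive integers, your argument that the $\Z$-component of the step of $\tilde P$ satisfies $|d_1|=1$ collapses, and with it your explicit transport formula $a=p_0+j\bar p+h$. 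A repair allowing arbitrary $d_1$ is possible (you would set $\bar p:=d_1\bar d_0+d_2$ and index $P$ by $i$ rather than $j$), but then the order-of-$\bar p$ analysis at the end becomes exactly the paper's shortening argument and no longer uses the bound $s\le(N+1)/2$ you invoke.

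The paper avoids the section entirely by taking the larger fibre $F=\Z_n$: the map $\psi(a)=(\lambda(\phi_L(a)),a)\in\Z\times\Z_n$ is automatically a Freiman isomorphism since the second coordinate alone separates points, and the return trip to $\Z_n$ is just the projection $\omega$ onto the second factor, giving $P=\omega(Q)$ and $H=\omega(K)$ with no computation. This handles arbitrary rectifiable $\phi_L(A)$ uniformly. The only residual work---shortening $P$ when its terms collide modulo $H$, and checking $|P|>1$ via $(|Q|-1)|K|\ge 2|H|\ge 2|A|>|2A|-|A|$---is identical in spirit to what you sketch. Your smaller fibre $L$ buys nothing and costs you both the section and the $|d_1|=1$ step; switching to $F=\Z_n$ closes the gap with less effort.
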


We close this section with the deduction of Proposition~\refp{combo} from
Theorem~\reft{small}.
\begin{proof}[Proof of Proposition~\refp{combo}]
Since $\phi_L(A)$ is rectifiable, there is a local isomorphism, say $\lam$,
from $\phi_L(A)$ to $\Z$, and then the mapping $\psi\colon A\to\Z\times\Z_n$
defined by
  $$ \psi(a) := (\lam\circ\phi_L(a),a),\ a\in A $$
is a local isomorphism between $A$ and its image in $\Z\times\Z_n$.
Consequently, the set $\psi(A)\seq\Z\times\Z_n$ satisfies $|\psi(A)|=|A|$ and
$|2\psi(A)|=|2A|$. As a result,
  $$ \frac{|2\psi(A)|}{|\psi(A)|}
                                = \frac{|2A|}{|A|} < 3\Big(1-\frac1s\Big). $$
On the other hand, the size of the projection of the set
$\psi(A)\seq\Z\times\Z_n$ onto the first component of the direct product is
$|\lam\circ\phi_L(A)|=|\phi_L(A)|=s$. Thus, we can apply Theorem~\reft{small}
to the set $\psi(A)$ to find an arithmetic progression $Q\seq\Z\times\Z_n$ of
size $|Q|\ge 3$ and a subgroup $K\le\{0\}\times\Z_n$ such that
\begin{gather}
  \psi(A)\seq Q+K \label{e:0805a}
  \intertext{and}
  (|Q|-1)|K|\le|2\psi(A)|-|\psi(A)|=|2A|-|A|; \label{e:0805b}
\end{gather}
moreover, the elements of $Q$ reside in pairwise distinct $K$-cosets, and $K$
is proper in $\{0\}\times\Z_n$ since otherwise we would have $|K|=n$ and then
  $$ 2n \le (|Q|-1)|K| \le |2A|-|A| < 2|A| \le 2n. $$

Denoting by $\ome$ the projection of $\Z\times\Z_n$ onto the second
coordinate, we let $H:=\ome(K)\le\Z_n$ and $P:=\ome(Q)\seq\Z_n$.
From~\refe{0805a} and~\refe{0805b}, and in view of $|P|\le|Q|$ and $|H|=|K|$,
we readily conclude that $A\seq P+H$ and $(|P|-1)|H|\le|2A|-|A|$; however, an
extra effort is needed to ensure that the elements of $P$ lie in pairwise
distinct $H$-cosets, and that $|P|>1$.

To address the former point, we write $Q=\{g,g+d\longc g+(N-1)d\}$ where
$N:=|Q|\ge 3$. For $0\le i<j\le N-1$, the elements $\ome(g+id),\ome(g+jd)\in
P$ are in the same $H$-coset if and only if $(i-j)\ome(d)\in H$; that is, if
and only if $i\equiv j\pmod{\ord(\ome(d))}$, where $\ord(\ome(d))$ is the
order of $\ome(d)$ in $\Z_n/H$. Moreover, in this case
$\ome(g+id)+H=\ome(g+jd)+H$. Thus, if $\ord(\ome(d))\ge N$, then all elements
of $P$ reside in distinct $H$-cosets, while if $\ord(\ome(d))<N$, then the
sum $P+H$ will not be affected if we replace $P$ with its sub-progression
$\ome(\{g+id\colon 0\le i<\ord(\ome(d))\})$.

It remains to show that $|P|>1$. To this end we notice that if $|P|=1$, then
$A$ is contained in an $H$-coset; as a result,
  $$ (|Q|-1)|K| \ge 2|K| = 2|H| \ge 2|A| > |2A|-|A| $$
contradicting~\refe{0805b}.
\end{proof}

We remark that the quantity $|\phi_L(A)|$ is the number of $L$-cosets
determined by $A$. The situation where this quantity is too small for
Theorem~\reft{small} to be applicable is much more difficult to deal with.

\mysection{Kneser's and Kemperman's theorems and related
results}\label{s:kkr}

Recall that the period of a subset $A$ of an abelian group $G$ is the
subgroup $\pi(A):=\{g\in G\colon A+g=A\}\le G$, and that $A$ is periodic if
$\pi(A)$ is nonzero.

The following fundamental result due to Kneser is heavily used in our
argument.
\begin{theorem}[Kneser \cite{b:kn1,b:kn2}; see also \cite{b:mann}]
 \label{t:kneser}
Let $A$ and $B$ be finite, non-empty subsets of an abelian group $G$ such
that
  $$ |A+B| \le |A|+|B|-1. $$
Then, writing $H:=\pi(A+B)$, we have
  $$ |A+B| = |A+H|+|B+H|-|H|. $$
\end{theorem}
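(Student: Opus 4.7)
The plan is to derive the claimed equality from the classical Kneser inequality (the lower bound form) together with a divisibility argument that exploits the hypothesis $|A+B|\le|A|+|B|-1$.

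First, I would establish the lower bound
$$ |A+B| \ge |A+H|+|B+H|-|H|, $$
which is the content of Kneser's theorem in its usual form; the proof proceeds by induction on $|B|$ using Dyson's $e$-transform. Concretely, for $b,b'\in B$ with $b\neq b'$, one replaces $(A,B)$ by $(A',B'):=(A\cup(A+b-b'),\,B\cap(B-b+b'))$; this preserves $|A'|+|B'|=|A|+|B|$ and satisfies $A'+B'\seq A+B$, while making $B'$ strictly smaller unless $A+(b-b')\seq A$, which forces $b-b'\in\pi(A)$. Iterating over all pairs $b,b'$ and organizing the resulting periodicity information (tracked by the growing stabilizer) yields the inequality; this step, while classical, is the technical heart of the matter and the main obstacle if one wished to reprove it from scratch. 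I would simply invoke it as the standard Kneser theorem and cite \refb{kn1}, \refb{kn2}, \refb{mann}.

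Second, I would prove the matching upper bound. Since $A\seq A+H$ and $B\seq B+H$, the hypothesis gives
$$ |A+B| \le |A|+|B|-1 \le |A+H|+|B+H|-1. $$
Because $H=\pi(A+B)$, the set $A+B$ is a union of $H$-cosets; the sets $A+H$ and $B+H$ are trivially so. Hence each of $|A+B|$, $|A+H|$, $|B+H|$ is divisible by $|H|$, and therefore so is the nonnegative integer $|A+H|+|B+H|-|A+B|$. The displayed inequality shows this integer is at least $1$, so by divisibility it is at least $|H|$, giving
$$ |A+B| \le |A+H|+|B+H|-|H|. $$

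Combining this with the lower bound from the first step yields the stated equality. The only genuinely deep input is the classical Kneser inequality; the passage from that inequality to the equality form in the statement is the short divisibility observation above.
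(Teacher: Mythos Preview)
Your argument is correct. The divisibility step in the second part is clean: since $H=\pi(A+B)$ forces $|A+B|$, $|A+H|$, $|B+H|$ all to be multiples of $|H|$, the inequality $|A+H|+|B+H|-|A+B|\ge 1$ promotes to $\ge|H|$, and combined with the classical Kneser lower bound this gives the equality.

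There is nothing to compare against, however: the paper does not prove this statement at all. Theorem~\reft{kneser} is quoted as a classical result with references to Kneser~\refb{kn1,kn2} and Mann~\refb{mann}, and is then used as a black box throughout. Your write-up supplies what the paper deliberately omits; the reduction from the inequality form to the equality form via divisibility is standard and exactly how one usually presents it.
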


Since, in the above notation, we have $|A+H|\ge |A|$ and $|B+H|\ge |B|$,
Theorem~\reft{kneser} shows that $|A+B|\ge |A|+|B|-|\pi(A+B)|$ holds for any
finite, nonempty subsets $A$ and $B$ of an abelian group.
\begin{corollary}
Let $A$ and $B$ be finite, non-empty subsets of an abelian group $G$. If
  $$ |A+B| < |A|+|B|-1, $$
then $A+B$ is periodic.
\end{corollary}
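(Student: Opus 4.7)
The plan is to deduce this as an immediate contrapositive of Kneser's theorem (Theorem~\reft{kneser}), using the inequality $|A+B|\ge|A|+|B|-|\pi(A+B)|$ noted in the paragraph just preceding the corollary. This inequality is the only real input needed, so the proof should be very short.

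Concretely, I would argue by contradiction. Suppose $A+B$ is aperiodic, i.e.\ $\pi(A+B)=\{0\}$ and so $|\pi(A+B)|=1$. By the remark immediately following Kneser's theorem we then obtain
\[
  |A+B|\;\ge\;|A|+|B|-|\pi(A+B)|\;=\;|A|+|B|-1,
\]
which directly contradicts the hypothesis $|A+B|<|A|+|B|-1$. Therefore $\pi(A+B)\neq\{0\}$, i.e.\ $A+B$ is periodic.

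If one prefers to cite Kneser's theorem directly rather than the derived inequality, the argument is essentially the same: the hypothesis gives $|A+B|\le|A|+|B|-1$, so Theorem~\reft{kneser} applies and yields $|A+B|=|A+H|+|B+H|-|H|$ with $H:=\pi(A+B)$; assuming $H=\{0\}$ collapses the right-hand side to $|A|+|B|-1$, again contradicting the strict inequality.

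There is no substantive obstacle here; the only point to be careful about is the strictness of the inequality, since with only $|A+B|\le|A|+|B|-1$ one could not conclude periodicity (trivially: $B=\{b\}$ a singleton gives $|A+B|=|A|=|A|+|B|-1$ with $A+B$ aperiodic in general).
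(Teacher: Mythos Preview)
Your proof is correct and matches the paper's own approach: the corollary is stated immediately after the observation that Kneser's theorem yields $|A+B|\ge|A|+|B|-|\pi(A+B)|$, and the paper treats it as an instant consequence of that inequality, exactly as you do.
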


Theorem~\reft{kneser} along with the corollary just stated will be referred
to as \emph{Kneser's theorem}.

Kemperman's structure theorem \cite{b:k} deals with the equality case of
Kneser's theorem. Following Kemperman~\refb{k}, we say that a pair $(A,B)$ of
finite subsets of an abelian group $G$ is \emph{elementary} if at least one
of the following holds:
\begin{itemize}
\item[i)] $\min\{|A|,|B|\}=1$;
\item[ii)] $A$ and $B$ are arithmetic progressions sharing a common
    difference $d\in G$, the order of which in $G$ is at least $|A|+|B|-1$;
\item[iii)] $A=g_1+(H_1\cup\{0\})$ and $B=g_2-(H_2\cup\{0\})$, where
    $g_1,g_2\in G$ and $H_1,H_2$ are non-empty subsets of a subgroup $H\le
    G$ such that $H=H_1\cup H_2\cup\{0\}$ is a partition of $H$. Moreover,
    $c:=g_1+g_2$ is the only element of $A+B$ with a unique representation
    as $a+b$ with $a\in A$ and $b\in B$;
\item[iv)] $A=g_1+H_1$ and $B=g_2-H_2$, where $g_1,g_2\in G$ and $H_1,H_2$
    are non-empty, aperiodic subsets of a subgroup $H\le G$ such that
    $H=H_1\cup H_2$ is a partition of $H$. Moreover, every element of $A+B$
    has at least two representations as $a+b$ with $a\in A$ and $b\in B$.
\end{itemize}

The following theorem proved in~\refb{l0} is a simplified and relaxed version
of the main result of~\refb{k}.
\begin{theorem}[{\cite[Theorem 1]{b:l0}}]\label{t:kemp}
Let $A$ and $B$ be finite, non-empty subsets of an abelian group $G$,
satisfying $|A+B|\le|A|+|B|-1$. Suppose that either $A+B\neq G$, or there is
a group element with a unique representation as $a+b$ with $a\in A$ and $b\in
B$. Then there exists a finite, proper subgroup $H<G$ such that
\begin{itemize}
\item[(i)]  $|C+H|-|C|\le|H|-1$ with $C$ substituted by any of the sets
    $A,B$, and $A+B$;
\item[(ii)] $(\phi_H(A),\phi_H(B))$ is an elementary pair in the quotient
    group $G/H=\phi_H(G)$.
\end{itemize}
\end{theorem}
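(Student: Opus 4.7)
The plan is to choose the witness subgroup $H$ to be the sumset stabilizer $\pi(A+B)$ whenever this is proper in $G$, and to fall back on $H=\{0\}$ otherwise. Kneser's theorem will be the main tool for establishing condition (i), while condition (ii) will be reduced to the classification of critical pairs in the \emph{aperiodic} setting.

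First I would handle the main case, where $H:=\pi(A+B)$ is proper. Since $A+B$ is $H$-periodic by definition, one immediately has $|(A+B)+H|-|A+B|=0\le|H|-1$. Applying Kneser's theorem to $(A,B)$ yields
\[
|A+B|=|A+H|+|B+H|-|H|,
\]
and combining with $|A+B|\le|A|+|B|-1$ gives $(|A+H|-|A|)+(|B+H|-|B|)\le|H|-1$. Each summand is nonnegative, so each is at most $|H|-1$, establishing (i). For (ii), I would pass to $\bar G:=G/H$; dividing the Kneser equality by $|H|$ shows that the quotient pair $(\bar A,\bar B):=(\phi_H(A),\phi_H(B))$ satisfies $|\bar A+\bar B|=|\bar A|+|\bar B|-1$, with $\bar A+\bar B$ aperiodic by maximality of $H$. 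The alternative hypothesis descends to the quotient: either $\bar A+\bar B\neq\bar G$, or the image of a uniquely represented $c\in A+B$ remains uniquely represented in $\bar A+\bar B$---the latter being forced by the $H$-periodicity of $A+B$ together with the tight Kneser equality.

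Next I would dispose of the degenerate case $\pi(A+B)=G$. Here $A+B=G$, so the hypothesis supplies a uniquely represented $c=a_0+b_0$. Taking $H=\{0\}$ makes (i) trivial, and (ii) reduces to showing $(A,B)$ is already elementary in $G$. This would follow from the pair of identities $|A|+|B|\ge|G|+1$ (Kneser) and $|A\cup(c-B)|=|A|+|B|-1$ (from uniqueness), which together pin down one of the four elementary shapes.

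The hardest step, and the main obstacle, is the classification of aperiodic critical pairs needed in both cases: any pair $(\bar A,\bar B)$ in an abelian group with $|\bar A+\bar B|=|\bar A|+|\bar B|-1$, $\bar A+\bar B$ aperiodic, and subject to the stated alternative, must be one of the four elementary types. This is the heart of Kemperman's original argument and is established by induction on $|\bar A|+|\bar B|$: the base case (a singleton side) sits in type~(i), while the inductive step analyses a uniquely represented $\bar c$ via the translate $\bar A\cup(\bar c-\bar B)$, splitting into the arithmetic-progression scenario (type~(ii)) and the complementary-subgroup scenarios (types~(iii) and~(iv)). Once this classification is in place, the two cases above combine to give the result.
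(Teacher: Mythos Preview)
The paper does not prove this theorem; it is quoted verbatim from the external reference~\refb{l0} (see the sentence preceding the statement: ``The following theorem proved in~\refb{l0} is a simplified and relaxed version of the main result of~\refb{k}''). There is therefore no in-paper proof to compare your proposal against.

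That said, your outline is a reasonable sketch of how such a result is derived from Kemperman's original structure theorem. The reduction via $H=\pi(A+B)$ and Kneser's equality is standard and correct, and your handling of the degenerate case $A+B=G$ with $H=\{0\}$ is fine (the computation $A\cap(c-B)=\{a_0\}$, $|A|+|B|-1=|G|$ does force the type-(iii) shape). Two points deserve more care. First, the claim that a uniquely represented element $c$ descends to a uniquely represented $\phi_H(c)$ is true but your justification (``forced by the $H$-periodicity of $A+B$ together with the tight Kneser equality'') is too compressed: one must argue that if $\phi_H(a)+\phi_H(b)=\phi_H(c)$ with $a\in A$, $b\in B$, then the $H$-cosets $a+H$ and $b+H$ meet $A$ and $B$ in sets whose sum fills $c+H$, and the deficiency bounds $(|A+H|-|A|)+(|B+H|-|B|)\le|H|-1$ force overlap producing a representation of $c$ itself. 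Second, and more substantially, the ``hardest step'' you identify---the classification of aperiodic pairs with $|\bar A+\bar B|=|\bar A|+|\bar B|-1$---\emph{is} Kemperman's theorem, and you only gesture at its proof. Since the statement you are proving is explicitly billed as a repackaging of Kemperman, this is not a gap so much as an acknowledgment that the real content lies in~\refb{k}; but as a self-contained proof your proposal is incomplete at exactly the point where the work happens.
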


\mysection{The very-small-doubling property}\label{s:VSDS}

We say that a finite set $A$ in an abelian group is a very-small-doubling set
(abbreviated below as VSDS) if $|2A|<\frac32\,|A|$. Thus, for instance, any
coset, and in particular any singleton, is a VSDS, while a two-element set is
a VSDS if and only if it is a coset.

The following basic fact is an easy corollary from Kneser's theorem. Along
with its (much subtler) noncommutative version, it can be found in~\refb{f4}.
\begin{lemma}\label{l:1.5}
A finite set $A$ in an abelian group is a VSDS if and only if there is a
subgroup $H$ such that $A$ is contained in an $H$-coset and
$|A|>\frac23\,|H|$. Moreover, in this case  $H:=A-A$ and $2A$ is an
$H$-coset.
\end{lemma}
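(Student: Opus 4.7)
The plan is to prove both implications of the equivalence by leveraging Kneser's theorem (Theorem~\reft{kneser}), with the reverse implication being the substantive one. The forward direction is immediate: if $A\seq g+H$ for some subgroup $H$ and coset representative $g$, with $|A|>\frac23|H|$, then $2A\seq 2g+H$, so $|2A|\le|H|<\frac32|A|$, confirming that $A$ is a VSDS.

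For the converse, suppose $|2A|<\frac32|A|$. The case $|A|=1$ is trivial (take $H=\{0\}$), so assume $|A|\ge 2$. Then $|2A|<\frac32|A|\le 2|A|-1$, placing us in the range where Kneser's theorem applies to $A+A$: letting $H:=\pi(2A)$, it yields $|2A|=2|A+H|-|H|$. Since $A+H$ is a union of $H$-cosets, we can write $|A+H|=k|H|$ for some integer $k\ge 1$. Combining $|2A|=(2k-1)|H|$ with $|A|\le|A+H|=k|H|$ gives $|2A|/|A|\ge (2k-1)/k=2-1/k$, and the hypothesis $|2A|/|A|<\frac32$ forces $k=1$. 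Hence $A$ lies inside a single $H$-coset $g+H$, and from $|H|=|2A|<\frac32|A|$ we obtain $|A|>\frac23|H|$, establishing the existence of the desired $H$.

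For the ``moreover'' assertion, that $2A$ is an $H$-coset is automatic: $2A\seq 2g+H$ and $|2A|=|H|$. The inclusion $A-A\seq H$ is immediate from $A\seq g+H$. For the reverse inclusion, fix $h\in H$ and note that both $A$ and $A+h$ lie in the $|H|$-element coset $g+H$, so by inclusion-exclusion $|A\cap(A+h)|\ge 2|A|-|H|>\frac13|H|\ge 1$; any element of the intersection witnesses $h\in A-A$. Hence $H=A-A$, as required.

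No serious obstacle is anticipated: the entire argument hinges on the arithmetic identity $2-1/k<\frac32\iff k=1$, which is precisely why $\frac32$ is the correct threshold in the definition of a VSDS. The pigeonhole step at the end is routine once the coset containment and the density bound $|A|>\frac23|H|$ are in hand.
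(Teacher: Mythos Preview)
Your proof is correct and follows precisely the route the paper indicates: the paper does not spell out its own argument, merely stating that the lemma is ``an easy corollary from Kneser's theorem'' and citing Freiman~\refb{f4}, which is exactly what you carry out. One cosmetic point: the closing inequality $\frac13|H|\ge 1$ fails when $|H|\le 2$, but this is harmless since $2|A|-|H|$ is already a positive integer and hence at least~$1$.
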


The following asymmetric version is due to Olson (who also was primarily
concerned with the noncommutative settings).
\begin{lemma}[{\cite[Theorem~1]{b:o}}]\label{l:Olson}
If $A$ and $B$ are finite subsets of an abelian group, then either
 $|A+B|\ge |A|+\frac12\,|B|$, or $B$ is contained in a coset of the period
$H:=\pi(A+B)$.
\end{lemma}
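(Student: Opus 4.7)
The plan is to run a direct application of Kneser's theorem in the contrapositive. Set $H := \pi(A+B)$ and assume that $B$ meets at least two $H$-cosets; the goal is to conclude $|A+B|\ge|A|+\frac12\,|B|$.

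I first dispose of the ``easy'' regime $|A+B|\ge|A|+|B|-1$. Since $B$ meets at least two $H$-cosets one has $|B|\ge 2$, and therefore $|B|-1\ge\frac12\,|B|$, which yields the desired bound immediately.

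The substantive case is $|A+B|\le|A|+|B|-2$, where Theorem~\reft{kneser} applies with this specific $H$ and gives the exact formula
$$ |A+B| = |A+H|+|B+H|-|H|. $$
Let $t\ge 2$ be the number of $H$-cosets meeting $B$; then $|B+H|=t|H|$, while $|A+H|\ge|A|$ trivially, whence
$$ |A+B|\ge|A|+(t-1)|H|. $$
On the other hand, the $t$ cosets meeting $B$ cover $B$, so $|B|\le t|H|$, i.e., $|H|\ge|B|/t$. Substituting into the previous display,
$$ |A+B|\ge|A|+\frac{t-1}{t}\,|B|\ge|A|+\frac12\,|B|, $$
where the last inequality uses $t\ge 2$.

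I do not anticipate any real obstacle: the content of the lemma is essentially packaged inside Kneser's theorem, and the only subtlety is to dispose of the degenerate range where Kneser's bound fails to be sharp, which is handled by the elementary inequality $|B|-1\ge\frac12\,|B|$ valid whenever $|B|\ge 2$.
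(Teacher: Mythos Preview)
Your argument is correct. The paper does not supply its own proof of this lemma; it simply cites Olson's original paper, where the result is established in the more general (possibly non-abelian) setting and therefore cannot rely on Kneser's theorem. In the abelian case treated here, your route through Kneser is the natural one: once $B$ meets $t\ge 2$ cosets of $H=\pi(A+B)$, Kneser's exact formula $|A+B|=|A+H|+|B+H|-|H|$ together with $|B+H|=t|H|\ge t\cdot|B|/t$ immediately gives $|A+B|\ge|A|+\frac{t-1}{t}|B|\ge|A|+\frac12|B|$, and the boundary range $|A+B|\ge|A|+|B|-1$ is handled by $|B|\ge 2$. What you gain is brevity and self-containment within the tools already assembled in Section~\refs{kkr}; what Olson's original argument buys is applicability beyond abelian groups, which is irrelevant for the present paper.
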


As a corollary of Olson's result, we obtain several conditions which ensure
that $|A+B|\ge|A|+\frac12\,|B|$.

\begin{corollary}\label{c:0308a}
Suppose that $A$ and $B$ are finite subsets of an abelian group such that
$|A+B|<|A|+\frac12\,|B|$. Let $H:=\pi(A+B)$. If $|A|\le|B|$, then
$|B|>\frac23\,|H|$, as a result of which $H=B-B$, $2B$ is an $H$-coset, and
$B$ is a VSDS.
\end{corollary}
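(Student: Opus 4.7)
The plan is to extract, step by step, the four claimed consequences from Olson's lemma together with Lemma~\refl{1.5}. The starting point is Lemma~\refl{Olson}: since the hypothesis $|A+B|<|A|+\frac12|B|$ rules out the first alternative there, $B$ must be contained in a single $H$-coset, say $B\seq g+H$.

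Next I would establish the core inequality $|B|>\frac23|H|$. Because $H$ is the stabilizer of $A+B$, the sumset $A+B$ is a union of $H$-cosets, and for any $a\in A$ the inclusion $a+B\seq A+B$ combined with $B+H=g+H$ gives $a+g+H\seq A+B$, so $|A+B|\ge|H|$. On the other hand, the hypotheses $|A+B|<|A|+\frac12|B|$ and $|A|\le|B|$ together yield $|A+B|<\frac32|B|$, and chaining gives $|H|<\frac32|B|$, i.e.\ $|B|>\frac23|H|$.

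At this stage the VSDS property is immediate: $2B\seq 2g+H$ forces $|2B|\le|H|<\frac32|B|$, so $B$ satisfies the definition in Section~\refs{VSDS}. Now I would invoke Lemma~\refl{1.5} to conclude that $H':=B-B$ is a subgroup with $B$ contained in an $H'$-coset, $|B|>\frac23|H'|$, and $2B$ equal to an $H'$-coset. It remains only to show $H'=H$, for which the obvious inclusion $H'=B-B\seq(g+H)-(g+H)=H$ reduces the task to ruling out $|H'|<|H|$; by Lagrange $|H'|$ divides $|H|$, so $|H'|<|H|$ would give $|H'|\le\frac12|H|<\frac23|H|\le|B|$, contradicting $|B|\le|H'|$ (which holds because $B$ lives in a single $H'$-coset).

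There is no serious obstacle here; the only point requiring a moment of care is the assertion $|A+B|\ge|H|$, which uses the periodicity of $A+B$ under $H$ to promote the translate $a+B$ inside a coset to the whole coset. Everything else is bookkeeping with the two cited lemmas.
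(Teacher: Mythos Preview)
Your proof is correct and follows essentially the same approach as the paper: apply Olson's lemma to place $B$ in an $H$-coset, use the chain $|H|\le|A+B|<|A|+\frac12|B|\le\frac32|B|$ to obtain $|B|>\frac23|H|$, and then invoke Lemma~\refl{1.5}. The only difference is that you spell out the identification $H=B-B$ via a Lagrange argument, whereas the paper absorbs this into the appeal to Lemma~\refl{1.5} (whose ``moreover'' clause is to be read as asserting that any subgroup $H$ with $B$ in an $H$-coset and $|B|>\frac23|H|$ must equal $B-B$).
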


\begin{proof}
By Lemma~\refl{Olson}, $B$ is contained in an $H$-coset. On the other hand,
  $$ |H| \le |A+B| < |A|+\frac12\,|B| \le \frac32\,|B| $$
and the rest follows from Lemma~\refl{1.5}.
\end{proof}

\begin{lemma}\label{l:consol}
Suppose that $A$ and $B$ are finite, nonempty subsets of an abelian group,
and let $H:=\pi(A+B)$. If $|A+B|<2\min\{|B|,\frac34\,|A|\}$, then
$|A|>\frac23|H|$ and $|B|>\frac12\,|H|$; moreover, each of the sets $A$ and
$B$ is contained in an $H$-coset and, indeed, $A+B$ is an $H$-coset.
\end{lemma}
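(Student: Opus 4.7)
The plan is to show that each of $A$ and $B$ lies in a single $H$-coset. Once this is done, $A+B$ lies in an $H$-coset, and since $H=\pi(A+B)$ forces $A+B$ to be a union of $H$-cosets, $A+B$ must be exactly that coset. Substituting $|A+B|=|H|$ into the two hypotheses immediately yields $|A|>\frac23\,|H|$ and $|B|>\frac12\,|H|$, as required.

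To show that $B$ is contained in a single $H$-coset I will argue by contradiction. Suppose $B$ meets $\beta\ge 2$ cosets of $H$; then by Lemma~\refl{Olson} we have $|A+B|\ge|A|+\frac12\,|B|$, which combined with $|A+B|<\frac32\,|A|$ gives $|B|<|A|$. Next I rule out $A$ being contained in a single $H$-coset: otherwise $|A+B|\ge|B+H|\ge 2|H|\ge 2|A|$, contradicting $|A+B|<\frac32\,|A|$. Consequently $A$ also meets $\alpha\ge 2$ cosets of $H$, and Lemma~\refl{Olson} applied to the pair $(B,A)$ gives $|A+B|\ge|B|+\frac12\,|A|$; combined with $|A+B|<2|B|$ this yields $|A|<2|B|$.

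Now $|B|>\frac12\,|A|$ together with $|A+B|<\frac32\,|A|$ gives $|A+B|<|A|+|B|$, so Kneser's theorem (Theorem~\reft{kneser}) applies and yields $|A+B|=(\alpha+\beta-1)|H|$. The hypotheses $|A+B|<\frac32\,|A|\le\frac32\,\alpha|H|$ and $|A+B|<2|B|\le 2\beta|H|$ then translate to $\alpha\ge 2\beta-1$ and $\alpha\le\beta$, respectively; together these force $\beta\le 1$, contradicting $\beta\ge 2$. Hence $B$ lies in a single $H$-coset, and a direct variant of this argument (using $|A+B|\ge|A+H|=\alpha|H|$ together with $|A+B|<2|B|\le 2|H|$) then forces $A$ into a single $H$-coset as well, completing the proof.

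I expect the main obstacle to be the intermediate range $\frac12\,|A|<|B|<|A|$: in this regime neither hypothesis individually fits the setup of Corollary~\refc{0308a}, and the Kneser-based coset-counting step above is the key move needed to close the gap.
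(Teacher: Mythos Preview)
Your proof is correct, and at its core it is the same Kneser-plus-coset-counting argument as the paper's: both reduce to $(\alpha+\beta-1)|H|=|A+B|$, then use $|A+B|<\frac32|A|$ and $|A+B|<2|B|$ to force $\alpha=\beta=1$.

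The difference is only in how you reach Kneser's hypothesis $|A+B|<|A|+|B|$. You go through Olson's lemma (Lemma~\refl{Olson}) in two directions to squeeze out $\frac12|A|<|B|<|A|$, and then deduce $|A+B|<\frac32|A|<|A|+|B|$. The paper bypasses this entirely with the one-line observation
\[
  |A+B|<2\min\Big\{|B|,\tfrac34|A|\Big\}\le|B|+\tfrac34|A|<|A|+|B|,
\]
using only $2\min\{x,y\}\le x+y$. With Kneser available from the outset, the density estimates $|A|>\frac23|H|$ and $|B|>\frac12|H|$ drop out immediately (before the coset count), and the whole argument is direct rather than by contradiction. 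So your Olson detour, while valid, is not needed; the ``intermediate range $\frac12|A|<|B|<|A|$'' you flag as the main obstacle never actually arises once you note the inequality above.
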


Although Lemma~\refl{consol} is essentially contained, for instance,
in~\cite[Propositions~2 and~3]{b:bp} and~\cite[Proposition~2.1]{b:df}, we
present a complete proof.

\begin{proof}
Since $2\min\{|B|,\frac34\,|A|\}\le |B|+\frac32\,|A|<|A|+|B|$, by Kneser's
theorem,
\begin{equation}\label{e:2005a}
  |A+H|+|B+H|-|H| = |A+B| < 2|B|
\end{equation}
and also
\begin{equation}\label{e:2005b}
  |A+H|+|B+H|-|H| = |A+B| < \frac32\,|A|.
\end{equation}
This readily gives $|B|>\frac12|H|$ and $|A|>\frac23|H|$.

Let $\alp:=|A+H|/|H|$ and $\bet:=|B+H|/|H|$. From~\refe{2005a} we get
$\alp+\bet-1<2\bet$; hence $\alp<\bet+1$ and therefore $\alp\le\bet$.
Similarly,~\refe{2005b} gives $\alp+\bet-1<\frac32\alp$, leading to
$\bet\le(\alp+1)/2$. Consequently, $\alp\le\bet\le(\alp+1)/2$, whence
$\alp=\bet=1$. This means that each of $A$ and $B$ is contained in an
$H$-coset, and then $A+B$ is an $H$-coset by the definition of $H$.
\end{proof}

\begin{corollary}\label{c:AVSDS}
Suppose that $A$ and $B$ are finite, nonempty subsets of an abelian group. If
$A$ is not a VSDS, then $|A+B|\ge 2\min\{|B|,\frac34\,|A|\}$.
\end{corollary}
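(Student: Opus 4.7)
The plan is to prove this as a direct contrapositive of Lemma~\refl{consol}. Assume toward contradiction that $|A+B|<2\min\{|B|,\tfrac34\,|A|\}$, and set $H:=\pi(A+B)$.

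By Lemma~\refl{consol}, this inequality forces $|A|>\tfrac23\,|H|$ and, crucially, $A$ to be contained in a single $H$-coset. But Lemma~\refl{1.5} characterizes VSDSs as precisely those finite sets that are contained in an $H$-coset with density exceeding $2/3$. So the conclusions of Lemma~\refl{consol} say exactly that $A$ is a VSDS, contradicting the hypothesis of the corollary.

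I expect this to be essentially a one-line deduction; there is no real obstacle since all the work is already encapsulated in Lemma~\refl{consol} and the characterization of VSDSs in Lemma~\refl{1.5}. The only care needed is to make sure the roles of $A$ and $B$ are not swapped: Lemma~\refl{consol} concludes the density bound for $A$ (the $\tfrac34$-bound applies to $A$), which is precisely the set on which the VSDS hypothesis is imposed in the corollary. So the contrapositive lines up correctly without any additional case analysis.
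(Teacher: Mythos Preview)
Your proof is correct and matches the paper's approach exactly: the paper states this as an immediate corollary of Lemma~\refl{consol} without even writing out a proof, since the contrapositive you describe (together with the characterization of VSDS in Lemma~\refl{1.5}) is the intended one-line deduction.
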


\mysection{More auxiliary results}\label{s:aux}

In this section we present a number of auxiliary results used in the proof of
Theorem~\reft{aux}. Some of the results are well known, or even classical,
some are original.

\begin{lemma}\label{l:2107a}
Suppose that $K$ is a subgroup, and that $A$ and $B$ are finite subsets of an
abelian group such that $A$ is contained in a single $K$-coset and
$|A|\ge\frac12\,|K|$.
\begin{itemize}
\item[i)]  If $|B|>|K|-|A|$, then $|A+B|\ge|K|$.
\item[ii)] If $|B|>2(|K|-|A|)$, then either $B$ is also contained in a
    single $K$-coset, or $|A+B|\ge |A|+|K|$.
\end{itemize}
\end{lemma}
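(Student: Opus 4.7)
The plan is to reduce both parts to an elementary pigeonhole argument inside a single $K$-coset. By translation we may assume $A\seq K$. Decompose $B$ according to the $K$-cosets it meets: let $g_1+K\longc g_t+K$ be those cosets and set $B_i:=B\cap(g_i+K)$, so that $B=B_1\cup\dotsb\cup B_t$ is a disjoint union. Since $A\seq K$, each $A+B_i$ is contained in the coset $g_i+K$, and these target cosets are pairwise disjoint; hence
\[ |A+B|=\sum_{i=1}^t|A+B_i|. \]
Each summand satisfies $|A+B_i|\ge|A|$ trivially. The crucial additional observation is that whenever $|A|+|B_i|>|K|$, one has $|A+B_i|=|K|$: for any $c\in g_i+K$ the sets $A$ and $c-B_i$ both lie in a single $K$-coset (which has $|K|$ elements) with cardinalities summing to more than $|K|$, so they must intersect, yielding $c\in A+B_i$.

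For part~(i), if $t\ge 2$ then $|A+B|\ge 2|A|\ge|K|$ because $|A|\ge|K|/2$. If $t=1$, then the only $B_i$ is $B$ itself, and $|A|+|B|>|K|$ by hypothesis, so the observation above gives $|A+B|=|K|$.

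For part~(ii), suppose $B$ is not contained in a single $K$-coset, so that $t\ge 2$. If $t\ge 3$, then $|A+B|\ge 3|A|=|A|+2|A|\ge|A|+|K|$, again using $|A|\ge|K|/2$. If $t=2$, then $|B_1|+|B_2|=|B|>2(|K|-|A|)$ forces at least one of $|B_1|,|B_2|$, say $|B_1|$, to exceed $|K|-|A|$; the observation then gives $|A+B_1|=|K|$, while $|A+B_2|\ge|A|$, and summing yields $|A+B|\ge|K|+|A|$.

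There is no genuine technical obstacle here: the proof rests entirely on the elementary ``pigeonhole in a coset'' principle that two subsets of a group of order $|K|$ with combined cardinality exceeding $|K|$ must intersect, combined with the clean additive decomposition of $|A+B|$ across the $K$-cosets determined by $B$, which is available precisely because $A$ lies in a single $K$-coset.
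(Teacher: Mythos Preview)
Your proof is correct and follows essentially the same approach as the paper: decompose $B$ into its $K$-coset components, use $|A+B|=\sum_i|A+B_i|$, and combine the trivial bound $|A+B_i|\ge|A|$ with the pigeonhole fact that $|A+B_i|=|K|$ whenever $|A|+|B_i|>|K|$. The only cosmetic difference is that the paper orders the $B_i$ by decreasing size so that in the case $t=2$ the larger block is automatically $B_1$, whereas you argue directly that one of the two blocks must exceed $|K|-|A|$.
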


\begin{proof}
Write $B=B_1\longu B_k$ where $|B_1|\longge|B_k|>0$, the union is disjoint,
and each $B_i$ is contained in a single $K$-coset.

\smallskip
 i) If $k=1$, then $|A+B|=|K|$ by the pigeonhole principle; if $k\ge 2$, then
$|A+B|\ge k|A|\ge2|A|\ge|K|$.

\smallskip
 ii) If $k=2$, then $|B_1|\ge\frac12\,|B|>|K|-|A|$ whence
$|A+B|=|A+B_1|+|A+B_2|\ge |K|+|A|$. If $k\ge 3$, then
$|A+B|\ge3|A|\ge|K|+|A|$.
\end{proof}

\begin{lemma}\label{l:Mantel}
Suppose that $A$ is a finite subset of an abelian group. If $|2A|\le 3|A|-4$,
then there are at most $|A|^2/4$ group elements possessing a unique
representation as $a-b$ with $a,b\in A$.
\end{lemma}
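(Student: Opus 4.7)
The plan is to encode the unique differences as edges of a graph on $A$, prove that this graph is triangle-free under the hypothesis $|2A|\le 3|A|-4$, and then invoke Mantel's theorem --- which accounts for the name of the lemma. Let $U$ denote the set of group elements with a unique representation as $a-b$ with $a,b\in A$, and for each $d\in U$ write $(a_d,b_d)$ for the unique such ordered pair. The symmetry $(a,b)\leftrightarrow(b,a)$ gives $d\in U$ iff $-d\in U$, and no $d\in U$ can be of order two (otherwise the swap would produce two distinct representations of $d$). Hence each unordered pair $\{a_d,b_d\}$ corresponds to exactly two elements $\pm d$ of $U$. Form the graph $\Gam$ on vertex set $A$ whose edges are the pairs $\{a,b\}$ with $a-b\in U$.

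The central structural claim is that $\Gam$ is triangle-free. Suppose for contradiction $\{a,b,c\}\seq A$ forms a triangle, so each of $a-b,a-c,b-c$ is a unique difference. For each edge $\{x,y\}$ the uniqueness of $d=x-y$ forces $|A\cap(A+d)|=1$, which in turn yields $(A+x)\cap(A+y)=\{x+y\}$. Thus the three translates $A+a,A+b,A+c$ meet pairwise in the singletons $\{a+b\},\{a+c\},\{b+c\}$. A crucial further observation is that their triple intersection is empty: the only candidate forced by $(A+a)\cap(A+b)$ is $a+b$, and $a+b\in A+c$ would require $a+b-c\in A$, but this would provide the second representation $(a+b-c,b)$ of the unique difference $a-c$ --- a contradiction. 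Inclusion-exclusion then yields
\[|(A+a)\cup(A+b)\cup(A+c)|=3|A|-3,\]
and since this union lies in $2A$, we obtain $|2A|\ge 3|A|-3$, contradicting the hypothesis $|2A|\le 3|A|-4$.

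With $\Gam$ triangle-free, Mantel's theorem bounds $|E(\Gam)|$ from above, and in combination with the correspondence between unique differences and edges of $\Gam$, this delivers the claimed bound on $|U|$. The technical heart of the proof is the triangle-freeness step --- in particular the verification that the triple intersection is empty, which is the unique point at which all three uniqueness hypotheses are used simultaneously. The subsequent application of Mantel is routine, though some care may be needed to convert the bound on $|E(\Gam)|$ into the correct bound on $|U|$ (depending on how one identifies edges with unique differences).
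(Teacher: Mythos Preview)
Your approach is the same as the paper's: encode unique differences as edges of a graph on $A$, show the graph is triangle-free, and invoke Mantel. Two remarks. First, your verification that the triple intersection $(A+a)\cap(A+b)\cap(A+c)$ is empty, while correct, is unnecessary: the paper simply uses the Bonferroni inequality
\[
  |(A+a)\cup(A+b)\cup(A+c)| \ge 3|A| - \sum_{\text{pairs}} |(A+x)\cap(A+y)| = 3|A|-3,
\]
which already suffices. Second --- and this is the point you yourself flag as needing ``some care'' --- the conversion from edges to unique differences loses a factor of~$2$: you correctly observe that each edge $\{a,b\}$ corresponds to \emph{two} elements $\pm(a-b)$ of $U$, so Mantel's bound $|E(\Gam)|\le|A|^2/4$ yields only $|U|=2|E(\Gam)|\le|A|^2/2$, not the stated $|A|^2/4$. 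The paper's own proof has the same slippage (it asserts a one-to-one correspondence between edges and uniquely representable elements, which is the two-to-one correspondence you identified), so you have faithfully reproduced its argument; but as written neither version actually establishes the bound $|A|^2/4$.
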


\begin{proof}
Consider the graph $\Gam$ with $A$ as a vertex set, with the vertices
 $a,b\in A$ adjacent if and only if $a-b$ has a unique representation as a
difference of two elements of $A$. If $a,b,c\in A$ induce a triangle in
$\Gam$, then by the Bonferroni inequalities we have
  $$ |2A| \ge |(A+a)\cup(A+b)\cup(A+c)| \ge 3|A| - 3, $$
contradicting the assumptions. Thus, $\Gam$ is triangle-free, and by Mantel's
theorem (which can be found in most of the standard graph theory textbooks),
the number of edges of $\Gam$ is at most $|A|^2/2$. However, the edges of
$\Gam$ are in a one-to-one correspondence with the uniquely representable
elements.
\end{proof}

Freiman's classical result known as ``the $(3n-3)$-theorem'' can be stated as
follows.
\begin{theorem}[Freiman~\refb{f1}]\label{t:3n-3}
Suppose that $A$ is a finite, nonempty set of integers, and $l\ge 1$ is an
integer. If $A$ is not contained in an $l$-term arithmetic progression, then
$|2A|\ge\min\{l,2|A|-3\}+|A|$.
\end{theorem}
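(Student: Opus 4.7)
The plan is to deduce this from the classical form of Freiman's $(3n-3)$-theorem: \emph{if $A\sbs\Z$ is finite with $|2A|\le 3|A|-4$, then $A$ is contained in an arithmetic progression of length at most $|2A|-|A|+1$.}

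The deduction from this classical form is immediate by contraposition. Suppose $A$ is not contained in any $l$-term AP, and yet $|2A|<|A|+\min\{l,2|A|-3\}$. Since these are integers, $|2A|\le|A|+\min\{l,2|A|-3\}-1$; in particular $|2A|\le 3|A|-4$, and the classical form furnishes an arithmetic progression $P\seq\Z$ with $A\seq P$ and $|P|\le|2A|-|A|+1\le\min\{l,2|A|-3\}\le l$, contradicting the hypothesis on $A$.

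For the classical form itself, I would argue by induction on $|A|$, with the base cases $|A|\le 3$ handled by direct inspection. After a translation and a dilation one may assume $\min A=0$, $\max A=d$ and $\gcd(A)=1$. Write $A=\{0=a_0<a_1<\dotsb<a_{n-1}=d\}$ and set $A^+:=A\stm\{d\}$, $A^-:=A\stm\{0\}$. The sums $2d$ and $a_{n-2}+d$ both lie in $2A\stm 2A^+$, so $|2A^+|\le|2A|-2$; symmetrically $|2A^-|\le|2A|-2$. Whenever $|2A^+|\le 3|A^+|-4$, the induction hypothesis places $A^+$ in an AP of length at most $|2A|-|A|$; then, exploiting $\gcd(A)=1$, one shows that $d$ fits into an arithmetic progression with the same common difference, of length at most $|2A|-|A|+1$, which accommodates all of $A$.

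The main obstacle is the borderline situation in which $|2A^+|\ge 3|A^+|-3$ and $|2A^-|\ge 3|A^-|-3$ simultaneously, so the induction hypothesis applies to neither side. In this regime one falls back on a direct structural argument: the elementary bound $|2A|\ge 2|A|-1$ (with equality precisely when $A$ is itself an arithmetic progression), combined with a careful accounting of the ``extra'' sums concentrated near the two ends of $2A$, forces $A$ into a short arithmetic progression by inspection.
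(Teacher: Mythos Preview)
The paper does not prove Theorem~\reft{3n-3}; it is quoted as Freiman's classical $(3n-3)$-theorem, with pointers to standard references (Grynkiewicz, Nathanson, Tao--Vu) for a full proof. So there is no ``paper's proof'' to compare against, and I can only comment on your argument on its own merits.

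Your reduction to the classical form (``if $|2A|\le 3|A|-4$ then $A$ lies in an arithmetic progression of length at most $|2A|-|A|+1$'') by contraposition is correct and clean.

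The sketch you offer for the classical form, however, has two genuine gaps. First, in the inductive step you assert that once the induction hypothesis places $A^+$ in a short progression, ``exploiting $\gcd(A)=1$, one shows that $d$ fits into an arithmetic progression with the same common difference.'' This is not generally true: the minimal progression containing $A^+$ has common difference $q=\gcd(A^+)$, and when $q>1$ the element $d$ satisfies $\gcd(q,d)=1$ (since $\gcd(A)=1$), hence $d\not\equiv 0\pmod q$ and $d$ lies in \emph{no} progression of difference $q$. Switching to $A^-$ fixes the gcd issue (removing $0$ gives $\gcd(A^-)=\gcd(A)=1$), but then the progression you obtain is $[a_1,d]$, and extending it to $[0,d]$ costs $a_1$ extra terms; the resulting bound $d+1\le|2A|-|A|+a_1$ matches the target $d+1\le|2A|-|A|+1$ only when $a_1=1$. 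Neither branch alone closes the argument, and you have not indicated how to combine them.

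Second, the ``borderline situation'' where $|2A^{+}|=3|A^{+}|-3$ and $|2A^{-}|=3|A^{-}|-3$ simultaneously is precisely where the substance of Freiman's theorem lives, and you dispose of it by saying one ``falls back on a direct structural argument \ldots\ by inspection.'' That is not a proof. The elementary bound $|2A|\ge 2|A|-1$ together with end-counting is genuinely insufficient here; the standard treatments (e.g.\ Nathanson~\refb{n}, Theorem~1.13) require a separate and non-trivial argument at exactly this point.
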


For a modern exposition of Theorem~\reft{3n-3} and related results see, for
instance,~\cite[Chapter~7]{b:g}, \cite[Theorem 1.13]{b:n},
or~\cite[Theorem~5.11]{b:tv}.

We need yet another well-know result of Freiman.
\begin{lemma}[Freiman~\refb{f3}]\label{l:Fpoints}
Suppose that $Z$ is a finite subset of the unit circle on the complex plane.
If
  $$ \Big| \sum_{z\in Z} z \Big| = \eta |Z|,\quad \eta\in[0,1], $$
then there is an open arc of the circle of the angle measure $\pi$ containing
at least $\frac12(1+\eta)|Z|$ elements of $Z$.
\end{lemma}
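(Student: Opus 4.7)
The plan is to use an averaging argument over all open semicircles, combined with the antipodal symmetry of the circle. After rotating $Z$ so that $\sum_{z \in Z} z = \eta|Z|$ is a nonnegative real (which is permissible since both the hypothesis and the conclusion are rotation-invariant), I would parametrize the open arcs of angle measure $\pi$ as $S_\theta := \{e^{i\phi} : \phi \in (\theta, \theta+\pi)\}$ and set $N(\theta) := |Z \cap S_\theta|$. The goal reduces to showing that $\max_\theta N(\theta) \ge \frac12(1+\eta)|Z|$.

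The key step is the identity
$$ \int_0^{2\pi} N(\theta)\, \sin\theta\, d\theta = -2\eta|Z|, $$
which follows by swapping the sum and the integral: writing each $z \in Z$ as $e^{i\alpha_z}$, the contribution of $z$ is $\int_{\alpha_z - \pi}^{\alpha_z} \sin\theta\, d\theta = -2\cos\alpha_z$, and summing yields $-2\sum_z \cos\alpha_z = -2\eta|Z|$ by our normalization. To exploit antipodal cancellation, I would then introduce $g(\theta) := N(\theta) - N(\theta+\pi)$; this function is $\pi$-antiperiodic and the substitution $\theta \mapsto \theta + \pi$ gives $\int_0^{2\pi} g(\theta)\sin\theta\, d\theta = -4\eta|Z|$. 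Since $\int_0^{2\pi} |\sin\theta|\, d\theta = 4$, the trivial bound produces $\max_\theta g(\theta) \ge \eta|Z|$ (using $\pi$-antiperiodicity to drop the absolute value).

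To convert this into a lower bound on $N(\theta^*)$, I would pick $\theta^*$ in the interior of an interval on which $g$ attains its maximum, further restricting to a generic point so that neither $e^{i\theta^*}$ nor $e^{i(\theta^* + \pi)}$ lies in $Z$; this is possible because $g$ is a step function whose jumps occur only at the finitely many angles of the form $\alpha_z$ or $\alpha_z - \pi$, leaving the maximum-interval open and the finitely many bad angles easy to avoid. For such a generic $\theta^*$ we have $N(\theta^*) + N(\theta^* + \pi) = |Z|$, so that
$$ N(\theta^*) = \frac{|Z| + g(\theta^*)}{2} \ge \frac{(1+\eta)|Z|}{2}, $$
as required. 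The only real subtlety is this boundary-avoidance at the end, which I expect to be the main (though minor) technical point; the rest is a clean double-counting followed by a single-line inequality.
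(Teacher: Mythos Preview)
The paper does not prove this lemma; it simply attributes it to Freiman~\refb{f3} and uses it as a black box. Your averaging argument is correct and is essentially the standard proof of this classical fact.

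One small point worth tightening: you assert that the maximum of $g$ is attained on an open interval. This is true, but the reason deserves a line of justification. Since $Z$ is a set (no multiplicities), at any jump point $\theta_0$ at most one element of $Z$ lies at $e^{i\theta_0}$ and at most one at $e^{i(\theta_0+\pi)}$; a short case check then shows that $g(\theta_0)$ always lies between $g(\theta_0^-)$ and $g(\theta_0^+)$ (or equals both). Hence the pointwise maximum of $g$ coincides with its essential supremum, which is attained on a union of open intervals where the step function is constant. Alternatively, and perhaps more cleanly, you can bypass this issue entirely: from $\int_0^\pi g(\theta)\sin\theta\,d\theta=-2\eta|Z|$ with $\sin\theta\ge 0$ one gets directly that $\{\theta\in(0,\pi):g(\theta)\le-\eta|Z|\}$ has positive measure, and by $\pi$-antiperiodicity so does $\{\theta:g(\theta)\ge\eta|Z|\}$; any point of this set avoiding the finitely many bad angles works.
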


The following basic lemma shows that rectifiable sets cannot have a strong
correlation with finite cosets.
\begin{lemma}\label{l:rect2A}
If $A$ is a rectifiable subset of an abelian group $G$, then for any finite
subgroup $K\le G$ and any element $g\in G$ we have
$|A\cap(g+K)|\le\frac12(|K|+1)$.
\end{lemma}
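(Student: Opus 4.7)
The proof is very short, and the plan is just to combine three observations about the intersection $B:=A\cap(g+K)$.

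First I would observe that $B$ inherits rectifiability from $A$: restricting the Freiman-isomorphism $\lam\colon A\to\Z$ to $B$ yields a Freiman-isomorphism $\lam|_B\colon B\to\Z$, so in particular $|2B|=|2\lam(B)|$ and $|\lam(B)|=|B|$.

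Next, since $\lam(B)$ is a finite set of integers of cardinality $|B|$, the elementary lower bound for sumsets in $\Z$ gives $|2\lam(B)|\ge 2|\lam(B)|-1=2|B|-1$, and hence $|2B|\ge 2|B|-1$.

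On the other hand, $B\seq g+K$ implies $2B\seq 2g+K$, so $|2B|\le|K|$. Combining the two bounds yields $2|B|-1\le|K|$, which is exactly the desired inequality $|B|\le\frac12(|K|+1)$.

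There is no real obstacle; the only point worth verifying is that rectifiability passes to subsets, which is immediate from the definition since the condition $s_1+s_2=s_3+s_4\Longleftrightarrow\lam(s_1)+\lam(s_2)=\lam(s_3)+\lam(s_4)$ is preserved under restriction to a subset.
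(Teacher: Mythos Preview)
Your proof is correct. It is a bit more direct than the paper's own argument, though the underlying idea is closely related.

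The paper argues by contradiction: setting $A_0:=(A-g)\cap K$ and assuming $|A_0|>\frac12(|K|+1)$, one gets by pigeonhole that $2A_0=K$ and that \emph{every} element of $K$ has at least two representations as a sum of two elements of $A_0$. But any finite integer set of size at least $2$ has at least two uniquely representable elements in its sumset (namely twice the minimum and twice the maximum), and this property is preserved under Freiman isomorphism; hence $A_0$, and therefore $A$, cannot be rectifiable.

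You instead use only the cardinality inequality $|2S|\ge 2|S|-1$ for finite $S\subset\Z$, transferred to $B$ via the Freiman isomorphism, and combine it with the trivial upper bound $|2B|\le|K|$. This avoids both the contradiction and the representation-counting, at the cost of using a (standard) sumset bound rather than the cruder pigeonhole. Either way the argument is short; yours is marginally cleaner for the purpose of this lemma, while the paper's version singles out the ``unique representation'' obstruction, which is sometimes the more reusable formulation.
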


\begin{proof}
Let $A_0:=(A-g)\cap K$. If $|A_0|>\frac12(|K|+1)$ then, by the pigeonhole
principle, $2A_0=K$ and moreover, any element of $K$ has at least two
representations as a sum of two elements of $A_0$. At the same time, for any
finite integer set $B$ with $|B|\ge 2$, there are at least two elements of
$2B$ possessing a unique representation as a sum of two elements of $B$.
Thus, $A_0$ is not rectifiable; hence, neither is $A$.
\end{proof}

\begin{proof}
Suppose that $A$ is a VSDS, and let $K:=A-A$; thus, $K$ is a subgroup, $A$ is
contained in a $K$-coset, and $|A|>\frac23\,|K|$. If $A$ is rectifiable, then
by the lemma $|A|\le \frac12(|K|+1)$. Therefore $\frac23\,|K|<\frac12(|K|+1)$
implying $|K|\le 2$ and, consequently, $|A|=1$.
\end{proof}

\begin{lemma}\label{l:kemp}
Suppose that $A$ and $B$ are finite subsets of an abelian group $G$ such that
$|A+B|\le|A|+|B|-1$, $|A|+|B|\le|G|-1$, and $\min\{|A|,|B|\}\ge 2$. If $B$ is
rectifiable, not an arithmetic progression, and not contained in a proper
coset, then there is a nonzero, finite, proper subgroup $H<G$ such that $B$
meets two $H$-cosets and has exactly $\frac{|H|+1}2$ elements in each of
them.
\end{lemma}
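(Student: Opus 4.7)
The plan is to invoke Kemperman's structure theorem (Theorem~\reft{kemp}) for the pair $(A,B)$. This is legitimate because $|A+B|\le|A|+|B|-1\le|G|-2<|G|$, so $A+B\ne G$. Thus we obtain a finite proper subgroup $H<G$ satisfying conditions (i) and (ii) of Theorem~\reft{kemp}, and I claim this $H$ is the subgroup sought in the conclusion.

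First I will rule out $H=\{0\}$. In that situation $(A,B)$ itself is elementary in $G$. The case $\min\{|A|,|B|\}=1$ is excluded by hypothesis, and the case of a common-difference arithmetic-progression pair is excluded because $B$ is not an arithmetic progression. In the two remaining cases (iii) and (iv) of the definition, $B$ is contained in a coset of some subgroup $\wt H\le G$; if $\wt H$ is proper this contradicts ``$B$ is not contained in a proper coset'', while if $\wt H=G$ a direct cardinality count gives $|A|+|B|\ge|G|$, contradicting $|A|+|B|\le|G|-1$. Hence $|H|\ge 2$.

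Now set $t:=|\phi_H(B)|$ and let $\beta_1,\dots,\beta_t$ denote the sizes of the $t$ nonempty intersections $B\cap(g_i+H)$. Since $B$ is not contained in a proper coset and $H$ is proper in $G$, we have $t\ge 2$. Condition (i) of Theorem~\reft{kemp} gives
\[
   \sum_{i=1}^t(|H|-\beta_i)=|B+H|-|B|\le|H|-1,
\]
while Lemma~\refl{rect2A} applied to the rectifiable set $B$ yields $\beta_i\le(|H|+1)/2$, and hence $|H|-\beta_i\ge(|H|-1)/2$. Combining, $t(|H|-1)/2\le|H|-1$, which (since $|H|\ge 2$) forces $t\le 2$; together with $t\ge 2$ this yields $t=2$. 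Then $\beta_1+\beta_2\ge|H|+1$ and $\beta_1+\beta_2\le|H|+1$ jointly give $\beta_1=\beta_2=(|H|+1)/2$, the asserted structure (and, en passant, $|H|$ is odd and $\ge 3$).

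The main technical point is the elimination of $H=\{0\}$ through elementary-pair cases (iii) and (iv): one must distinguish the ``inner'' subgroup $\wt H$ appearing in the definition of an elementary pair from the Kemperman subgroup $H$, and invoke the cardinality hypothesis $|A|+|B|\le|G|-1$ to exclude the subcase $\wt H=G$. Once $|H|\ge 2$ is secured, the remaining computation reduces to a short interplay between condition (i) of Kemperman and the rectifiability bound of Lemma~\refl{rect2A}.
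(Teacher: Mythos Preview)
Your proof is correct and follows essentially the same route as the paper's: both apply Theorem~\reft{kemp}, combine condition~(i) with the rectifiability bound of Lemma~\refl{rect2A} to force $|\phi_H(B)|\le 2$ or $H=\{0\}$, and eliminate $H=\{0\}$ by running through the elementary-pair types. The only differences are organizational---you dispose of $H=\{0\}$ first and are more explicit about the case analysis for types (iii) and (iv), where the paper simply says ``this option is ruled out by the assumptions of the lemma.''
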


\begin{proof}
In view of $|A+B|\le|A|+|B|-1<|A|+|B|<|G|$, we can apply Theorem~\reft{kemp}
to find a finite, proper subgroup $H<G$ such that $|B+H|\le|B|+|H|-1$ and
$(\phi_H(A),\phi_H(B))$ is an elementary pair in the quotient group $G/H$.
Denoting by $k$ the number of $H$-cosets determined by $B$, we have
$|B+H|=k|H|$ and then, by Lemma~\refl{rect2A},
  $$ k|H| \le \frac{|H|+1}2\,k + |H| - 1; $$
that is,
  $$ (k-2)(|H|-1)\le 0. $$
Thus, either $k\le 2$, or $H=\{0\}$. In the latter case $(A,B)$ is an
elementary pair in $G$; however, this option is ruled out by the assumptions
of the lemma. We cannot have $k=1$ either as $B$ is not contained in a proper
coset. Thus, $k=2$, and then $B$ meets two $H$-cosets and has exactly
$\frac{|H|+1}2$ elements in each of them.
\end{proof}

The following simple lemma classifies three-element subsets of abelian groups
with at most one involution.
\begin{lemma}\label{l:3cyclic}
If $A$ is a three-element subset of an abelian group possessing at most one
involution, then one of the following holds:
\begin{itemize}
\item[i)]  $A$ is a coset of a three-element subgroup; accordingly,
    $|2A|=3$.
\item[ii)] $A$ is a three-term arithmetic progression with the difference
    of order at least $4$, and either $|2A|=4$ (if the difference has order
    exactly $4$), or $|2A|=5$ (if the difference has order at least $5$).
\item[iii)] $A=\{a,a+d,b\}$ where $d$ is an involution, $2b\ne 2a+d$, and
    $|2A|=5$.
\item[iv)] $A$ is neither a coset nor an arithmetic progression, and
    $|2A|=6$.
\end{itemize}
\end{lemma}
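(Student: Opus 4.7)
The plan is to translate $A$ so that $0\in A$, write $A=\{0,x,y\}$ with $x,y$ distinct and nonzero, and observe that $2A$ is the image of the six expressions $0,x,y,2x,2y,x+y$, so that $|2A|=6$ minus the number of coincidences among these expressions. A case-by-case inspection (discarding identifications that would immediately force $x=0$, $y=0$, or $x=y$) shows that the only possible coincidences are (a) $2x=0$, (b) $2y=0$, (c) $x+y=0$, (d) $2x=2y$, (e) $y=2x$, and (f) $x=2y$. I would then record two structural characterisations: $A$ is a three-term arithmetic progression precisely when at least one of (c), (e), (f) holds, since each of these expresses that one element of $A$ equals the sum of the other two; and $A$ is a coset of a three-element subgroup precisely when all three of (c), (e), (f) hold simultaneously.

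Next I would exploit the at-most-one-involution hypothesis. Since (a), (b), (d) are exactly the statements that $x$, $y$, $x-y$ respectively is an involution, at most one of (a), (b), (d) can hold. A direct check of the remaining pairs rules out most combinations of an involution coincidence with an AP coincidence: for example (a)+(c) would force $y=-x=x$, while (a)+(e) would force $y=0$. The joint configurations that survive this sieve are precisely (a)+(f), (b)+(e), (c)+(d), each of which forces $A$ to be an arithmetic progression with difference of order exactly $4$, and (c)+(e)+(f), which forces $x$ to have order $3$ and hence $A=\{0,x,2x\}$ to be a three-element subgroup coset.

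I would then tally by the number of coincidences. Zero coincidences yields $|2A|=6$ with $A$ neither an AP nor a coset, giving case (iv). Three coincidences must be (c)+(e)+(f), yielding $|2A|=3$ and case (i). Two coincidences must be one of (a)+(f), (b)+(e), (c)+(d), giving $|2A|=4$ with $A$ an AP of difference order exactly $4$, which is case (ii) with $|2A|=4$. Exactly one coincidence gives $|2A|=5$: if it is one of (c), (e), (f), then $A$ is an AP and the absence of every other coincidence forces the common difference to have order at least $5$, giving case (ii) with $|2A|=5$; if it is one of (a), (b), (d), then after a short relabelling $A$ takes the form $\{a,a+d,b\}$ of case (iii) with $d$ the resulting involution, and the non-degeneracy condition $2b\ne 2a+d$ turns out to be precisely the assertion that no (c), (e), or (f) coincidence occurs (for instance, for (d) alone one sets $a:=x$, $b:=0$, $d:=x-y$, and then $2b-2a-d=-(2x+d)$ vanishes iff $2x=d$, iff $x+y=0$, which is (c)).

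The main obstacle is purely combinatorial bookkeeping, namely the enumeration of the pairs of coincidences and the systematic elimination of those contradicting the at-most-one-involution hypothesis. Nothing here is conceptually hard; the only care required is in matching each surviving configuration to exactly one of (i)--(iv) and in tracking the precise value of $|2A|$ through the case analysis.
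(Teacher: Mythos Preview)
Your argument is correct. The paper omits the proof of this lemma entirely, describing it only as ``somewhat technical, but straightforward''; your coincidence analysis among the six expressions $0,x,y,2x,2y,x+y$ is precisely the kind of straightforward but careful case check the authors have in mind, and you have carried it out accurately (including the verification that the surviving coincidence configurations always form a matching, so that $|2A|=6$ minus the number of coincidences is literally true).
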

We omit the somewhat technical, but straightforward proof.

\begin{lemma}\label{l:alpha}
Suppose that $\cA=\{\alp_1,\alp_2,\alp_3\}$ is a subset of an abelian group
such that all sums $\alp_i+\alp_j$ with $1\le i\le j\le 3$ are pairwise
distinct (as a result of which $\alp_1,\alp_2,\alp_3$ are pairwise distinct).
If there are indices $i,j,k,l\in\{1,2,3\}$ and a group element
$\bet\notin\cA$ such that $\bet=\alp_i+\alp_j-\alp_1=\alp_k+\alp_l-\alp_2$,
then either $\cA\cup\{\bet\}$ is a four-term arithmetic progression, or
$\{\alp_1,\alp_2,\bet\}$ is a coset of a $3$-element subgroup.
\end{lemma}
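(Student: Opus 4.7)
The plan is to enumerate possibilities for the pairs $\{i,j\}$ and $\{k,l\}$ and analyze each case. First, since $\bet\notin\cA$ and since the six sums $\alp_p+\alp_q$ ($1\le p\le q\le 3$) are pairwise distinct, the relation $\alp_i+\alp_j=\alp_1+\bet$ forces $\{i,j\}\ne\{1,m\}$ for any $m\in\{1,2,3\}$: indeed, if $\{i,j\}=\{1,m\}$, then $\alp_1+\alp_m=\alp_1+\bet$, hence $\bet=\alp_m\in\cA$. Consequently, $\{i,j\}\seq\{2,3\}$, leaving the three options $\{2,2\},\{2,3\},\{3,3\}$. Applying the same argument to $\alp_k+\alp_l=\alp_2+\bet$ gives $\{k,l\}\seq\{1,3\}$, with three options $\{1,1\},\{1,3\},\{3,3\}$. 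One of the resulting nine combinations, $(\{3,3\},\{3,3\})$, forces $\alp_1=\alp_2$ and is immediately ruled out.

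For the remaining eight combinations I would equate the two expressions to obtain the single relation
\[
  \alp_i+\alp_j+\alp_2 = \alp_k+\alp_l+\alp_1,
\]
set $d:=\alp_2-\alp_1$ and $e:=\alp_3-\alp_1$, and translate each identity into a linear relation between $d$ and $e$, together with an explicit expression $\bet=\alp_1+(\cdots)$. Equal-index combinations like $(\{2,2\},\{1,1\})$ yield $3d=0$; since $d\ne 0$ by distinctness, $d$ has order exactly $3$, and then $\{\alp_1,\alp_2,\bet\}=\{\alp_1,\alp_1+d,\alp_1+2d\}$ is a coset of $\<d\>$. Several other combinations — namely $(\{2,2\},\{1,3\})$, $(\{2,3\},\{1,1\})$, $(\{2,3\},\{3,3\})$, $(\{3,3\},\{1,3\})$ — produce $\alp_3\in\{\alp_1+3d,\alp_1-d\}$ or $\alp_3=\alp_1\pm 2d$ together with the matching expression for $\bet$, and in each case one verifies that $\cA\cup\{\bet\}$ is a four-term progression. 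The combination $(\{2,3\},\{1,3\})$ collapses to $2\alp_1=2\alp_2$, i.e.\ to a coincidence of sums in $2\cA$ that contradicts the distinctness hypothesis, so it is vacuous.

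The main obstacle is the remaining pair of cases, $(\{2,2\},\{3,3\})$ and $(\{3,3\},\{1,1\})$, which yield the more awkward relations $3d=2e$ and $2e+d=0$ respectively; neither a torsion identity $3d=0$ nor an extension relation $\alp_3\in\{\alp_1-d,\alp_1+3d\}$ falls out for free. To handle these, I would list the six sums as translates of $\{0,d,2d,3d,e,d+e\}$ (using $2e=3d$, respectively $2e=-d$) and exploit the distinctness of these six elements together with $\bet\notin\cA$ to force enough torsion on $d$ to recover $3d=0$ or to collapse $e$ into a multiple of $d$ compatible with the 4-AP conclusion. These two cases are mirror images (the substitution swapping the roles of $\alp_1,\alp_2$ and replacing $\bet$ by $\alp_1+\alp_2-\bet$ exchanges them), so one analysis will suffice. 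I expect this to be the delicate part of the proof; everything else is bookkeeping.
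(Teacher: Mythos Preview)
Your case-enumeration approach is essentially the paper's. The paper streamlines it by first observing that the multisets $\{i,j\}$ and $\{k,l\}$ must be \emph{disjoint}: if they share an element, say $i=k$, then $\alp_j+\alp_2=\alp_l+\alp_1$ with $\{j,2\}\ne\{l,1\}$, contradicting the distinctness of the six sums. This single observation kills $(\{2,3\},\{1,3\})$, $(\{2,3\},\{3,3\})$, $(\{3,3\},\{1,3\})$ and $(\{3,3\},\{3,3\})$ at once; the second and third of these you list among your ``4-AP'' cases, but they are in fact vacuous (they force $2\alp_2=\alp_1+\alp_3$ and $\alp_2+\alp_3=2\alp_1$, respectively).

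Your instinct that $(\{2,2\},\{3,3\})$ and $(\{3,3\},\{1,1\})$ are the delicate cases is correct, but the strategy of ``forcing enough torsion from distinctness'' cannot succeed: the lemma \emph{as stated} is false in exactly these cases. In $\Z$ take $\alp_1=1$, $\alp_2=3$, $\alp_3=0$, $\bet=-1$; the six sums $\{0,1,2,3,4,6\}$ are distinct, $\bet\notin\cA$, and $\bet=2\alp_3-\alp_1=2\alp_1-\alp_2$, yet $\cA\cup\{\bet\}=\{-1,0,1,3\}$ is not an arithmetic progression and $\Z$ has no three-element subgroup. (A second counterexample, for the other case: $\alp_1=0$, $\alp_2=2$, $\alp_3=3$, $\bet=4$.) The paper's own proof disposes of these two cases by an appeal to ``renumbering'' which matches the relation among $\alp_1,\alp_2,\alp_3$ to the $i\ne j$ case but does \emph{not} match the value of $\bet$, so the conclusion about $\cA\cup\{\bet\}$ does not transfer. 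What is actually true --- and what suffices for the application in Section~\refs{final}, where one only uses that $\cA$ is not contained in a four-term progression --- is the weaker statement that $\cA$ lies in a four-term arithmetic progression: in the case $(\{3,3\},\{1,1\})$ the relation $\alp_2+2\alp_3=3\alp_1$ gives $\cA\subset\{\alp_3,\alp_3+d',\alp_3+2d',\alp_3+3d'\}$ with $d'=\alp_1-\alp_3$, and the case $(\{2,2\},\{3,3\})$ is handled symmetrically with $d'=\alp_2-\alp_3$. So rather than trying to rescue the two hard cases, you should weaken the first alternative in the conclusion to ``$\cA$ is contained in a four-term arithmetic progression''.
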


\begin{proof}
From $\alp_i+\alp_j-\alp_1\notin\cA$ we get $i,j\in\{2,3\}$ and from
$\alp_k+\alp_l-\alp_2\notin\cA$ we get $k,l\in\{1,3\}$. If $\{i,j\}$ share a
common element with $\{k,l\}$, then assuming for definiteness that this
element is $i=k$ we get $\alp_j-\alp_1=\alp_l-\alp_2$ and consequently
$\alp_j+\alp_2=\alp_l+\alp_1$, which is impossible in view of $j\ne 1$ and
$l\ne 2$. Thus, $\{i,j\}$ is disjoint from $\{k,l\}$, and without loss of
generality, we can assume that $k=l\notin\{i,j\}$.

If $i\ne j$, then $\{i,j\}=\{2,3\}$, $k=1$, and
$\bet=\alp_2+\alp_3-\alp_1=2\alp_1-\alp_2$, implying
$\alp_3+2\alp_2=3\alp_1$. Letting $d:=\alp_1-\alp_2$, we thus have
$\alp_1=\alp_2+d$, $\bet=\alp_2+2d$, and $\alp_3=\alp_2+3d$, showing that $d$
has order at least $3$, and
$\cA\cup\{\bet\}=\{\alp_2,\alp_2+d,\alp_2+2d,\alp_2+3d\}$.

Finally, if $i=j$, then either $i=3,\ k=1$, or $k=3,\ i=2$, or $i=2,\ k=1$.
In the first case we have $2\alp_3-\alp_1=2\alp_1-\alp_2$ leading to
$\alp_2+2\alp_3=3\alp_1$, in the second case we similarly have
$\alp_1+2\alp_3=3\alp_2$; up to a renumbering, these cases were considered
above. In the third case where $i=2$ and $k=1$, we get $3\alp_1=3\alp_2$ and
$\bet=2\alp_2-\alp_1$; that is, $\del:=\alp_2-\alp_1$ has order $3$, and we
have $\alp_2=\alp_1+\del$ and $\bet=\alp_1+2\del$.
\end{proof}

\mysection{Partial results and the minimal counterexample} \label{s:minX}

In this section, assuming that Theorem~\reft{aux} is wrong, we study the
properties of the minimal counterexample set.

\begin{lemma}\label{l:MofA}
Suppose that Theorem~\reft{aux} is wrong. If $A\seq\Z_n$ is a counterexample
with $n$ smallest possible, then $|2A+L|-|2A|>|A+L|-|A|$ holds for any
nonzero subgroup $L<\Z_n$ satisfying $2A+L\ne\Z_n$.
\end{lemma}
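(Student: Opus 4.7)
The plan is to argue by contradiction: assume there is a nonzero proper subgroup $L<\Z_n$ with $2A+L\ne\Z_n$ for which $|2A+L|-|2A|\le|A+L|-|A|$. The idea is to pass to the quotient $\bar A:=\phi_L(A)\seq\Z_{n'}$ where $n':=n/|L|$, apply Theorem~\reft{aux} to $\bar A$ using the minimality of $n$ (since $|L|\ge 2$ forces $n'<n$), and then lift the resulting structure back to $A$ to contradict the assumption that $A$ is a counterexample.

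First I would verify that $\bar A$ meets the hypotheses of Theorem~\reft{aux} in $\Z_{n'}$. The containment in a proper coset pulls back: any proper coset of $\Z_{n'}$ is of the form $\bar g+\bar H$ with $\bar H=H/L$, and $\bar A\seq\bar g+\bar H$ would give $A\seq g+H$ for any lift $g$ of $\bar g$, with $H<\Z_n$ proper; this contradicts the hypothesis on $A$. The inequality $|2\bar A|<n'$ is just $|2A+L|<n$, i.e., $2A+L\ne\Z_n$. The doubling bound $|2\bar A|<\tfrac94|\bar A|$ is where the contradiction hypothesis is used: combining it with $|2A|<\tfrac94|A|$ yields
\[
  |2A+L|\le|A+L|+\bigl(|2A|-|A|\bigr)<|A+L|+\tfrac54|A|\le\tfrac94|A+L|,
\]
and division by $|L|$ gives the claim.

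By the minimality of $n$, Theorem~\reft{aux} applies to $\bar A$, producing one of its three conclusions. In case~i) we obtain $|2\bar A|-|\bar A|\ge C_0^{-1}n'$, i.e.\ $|2A+L|-|A+L|\ge C_0^{-1}n$; together with the contradiction hypothesis (rewritten as $|2A+L|-|A+L|\le|2A|-|A|$) this forces $|2A|-|A|\ge C_0^{-1}n$, so $A$ satisfies Theorem~\reft{aux}~i), a contradiction. In case~ii) the progression $\bar P\seq\Z_{n'}$ lifts to an arithmetic progression $P\seq\Z_n$ of the same length and with a difference projecting to that of $\bar P$, and $\bar H$ lifts to the proper subgroup $H:=\phi_L^{-1}(\bar H)$ of $\Z_n$ with $|H|=|\bar H||L|$; then $A\seq P+H$ and
\[
  (|P|-1)|H|=(|\bar P|-1)|\bar H||L|\le\bigl(|2\bar A|-|\bar A|\bigr)|L|=|2A+L|-|A+L|\le|2A|-|A|,
\]
so $A$ satisfies Theorem~\reft{aux}~ii). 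Case~iii) is entirely analogous: the three $\bar H$-cosets determined by $\bar A$ lift to three $H$-cosets whose exact intersection with $A$ is nonempty (and no more cosets can be met, or $\bar A$ would meet a fourth), non-collinearity is preserved under lifting, and the bound $3|H|\le|2A|-|A|$ follows in the same way.

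The only delicate step is the quotient-side doubling estimate $|2\bar A|<\tfrac94|\bar A|$; once it is established, the rest is a routine verification that subgroups and arithmetic progressions pull back cleanly under $\phi_L$ and that the three alternatives of Theorem~\reft{aux} are preserved by this lifting.
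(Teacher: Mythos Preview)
Your proof is correct and follows essentially the same route as the paper's: pass to the quotient $\phi_L(A)$, verify the hypotheses of Theorem~\reft{aux} there (the key step being the chain $|2A+L|\le|A+L|+(|2A|-|A|)<\tfrac94|A+L|$), invoke minimality of $n$, and lift each of the three conclusions back via $H:=\phi_L^{-1}(\bar H)$ using $|2A+L|-|A+L|\le|2A|-|A|$. The paper's argument is identical in structure and in the inequalities used.
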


\begin{proof}
Suppose that $A\seq\Z_n$ is not contained in a proper coset and satisfies
$2\le|2A|<\min\{\frac94|A|,n\}$ (as a result of which $n\ge 3$), but none of
the conclusions of the theorem holds true.

Suppose also, for a contradiction, that $L\le\Z_n$ is a nonzero subgroup with
$|2A+L|-|2A|\le|A+L|-|A|$ and $2A+L\ne\Z_n$. Notice that the last condition
implies that $L$ is proper.

Write $\cA:=\phi_L(A)$. If we had $|\cA|=1$, then $A$ were contained in a
single $L$-coset; thus, $|\cA|\ge 2$. On the other hand, $2A+L\ne\Z_n$ shows
that $2\cA\ne\Z_n/L$. We also have
  $$ |2A+L| \le |A+L| + |2A| - |A|
                                < |A+L| + \frac54\,|A| \le \frac94\,|A+L|, $$
whence
  $$ |2\cA| = \frac{|2A+L|}{|L|} < \frac94\frac{|A+L|}{|L|} = \frac94|\cA|. $$
The minimality of $n$ shows now that the set $\cA\seq\Z_n/L$ is not a
counterexample to Theorem~\reft{aux}. This means that
 there is a subgroup $\cH\le\Z_n/L$ such that one
of the following holds:
\begin{itemize}
\item[i)]   $|2\cA|-|\cA|>C_0^{-1}|\Z_n/L|$.
\item[ii)]  There is an arithmetic progression $\cP\seq\Z_n/L$ with
    $\cA\seq\cP+\cH$ and
       $$ (|\cP|-1)|\cH|\le|2\cA|-|\cA|. $$
\item[iii)] $\cA$ meets exactly three $\cH$-cosets which are not in an
    arithmetic progression, and
      $$ 3|\cH|\le |2\cA|-|\cA|. $$
\end{itemize}
Let $H:=\phi_L^{-1}(\cH)\le\Z_n$.

In the case i), we have
  $$ |2A|-|A| \ge |2A+L|-|A+L| = (|2\cA|-|\cA|)|L| > C_0^{-1}n. $$

In the case ii), we define $\wt{c},\wt{d}\in\Z_n/L$ to be the initial term
and the difference of $\cP$. Choosing $c,d\in\Z_n$ with $\phi_L(c)=\wt{c}$
and $\phi_L(d)=\wt{d}$, and letting $P:=\{c,c+d\longc c+(|\cP|-1)d\}$, we get
a progression $P\seq\Z_n$ with $|P|=|\cP|$ and $\phi_L^{-1}(\cP)=P+L$. From
$\cA\seq\cP+\cH$ we derive then that $A\seq P+H$, and from
$(|\cP|-1)|\cH|\le|2\cA|-|\cA|$ we obtain
  $$ (|P|-1)|H| = (|\cP|-1)|\cH||L|
                     \le (|2\cA|-|\cA|)|L| = |2A+L|-|A+L| \le |2A|-|A|. $$

Finally, in the case iii) it is immediately seen that $A$ is contained in a
union of three $H$-cosets which are not in an arithmetic progression. Also,
  $$ 3|H| = 3|\cH||L| \le (|2\cA|-|\cA|)|L| = |2A+L| - |A+L| \le |2A|-|A|. $$

In any case, $A$ has the structure described in the theorem; hence, is not a
counterexample.
\end{proof}

\begin{lemma}\label{l:2Aper}
Suppose that Theorem~\reft{aux} is wrong. If $A\seq\Z_n$ is a counterexample
with $n$ smallest possible, then $2A$ is aperiodic.
\end{lemma}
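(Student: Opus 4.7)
The plan is to derive Lemma~\refl{2Aper} as a quick corollary of the preceding Lemma~\refl{MofA}. Suppose for contradiction that $L:=\pi(2A)$ is nontrivial. By the very definition of the period, $2A+L=2A$, so in particular $|2A+L|-|2A|=0$. Moreover, the assumption $|2A|<n$ (which is part of the hypothesis that $A$ is a counterexample to Theorem~\reft{aux}) combined with $2A+L=2A$ gives $2A+L\ne\Z_n$. Thus the subgroup $L$ satisfies both hypotheses of Lemma~\refl{MofA}: it is nonzero, and its sum with $2A$ does not exhaust $\Z_n$.

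Applying Lemma~\refl{MofA} to this $L$, we would obtain $|2A+L|-|2A|>|A+L|-|A|$. But the left-hand side equals $0$, while the right-hand side is nonnegative (always $|A+L|\ge|A|$). This is the sought contradiction, so $\pi(2A)=\{0\}$.

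There is essentially no obstacle here: the argument is a one-line reduction, and the only things to check are that the hypotheses ``$L$ nonzero'' and ``$2A+L\ne\Z_n$'' of Lemma~\refl{MofA} are in force. The first holds by the contrapositive assumption, and the second is immediate from $|2A|<n$ together with $2A+L=2A$. The rest of the counterexample structure (the bound $|2A|<\frac94|A|$, the non-containment of $A$ in a proper coset, and the failure of conclusions i)--iii)) is used only implicitly, in that it is what justifies invoking Lemma~\refl{MofA} for the minimal $n$.
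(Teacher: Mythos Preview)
Your proof is correct and follows essentially the same approach as the paper: both arguments set $L:=\pi(2A)$, note that $2A+L=2A\ne\Z_n$, and observe that the strict inequality of Lemma~\refl{MofA} fails since $|2A+L|-|2A|=0\le|A+L|-|A|$, forcing $L=\{0\}$. The paper's version is merely more terse.
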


\begin{proof}
Let $L:=\pi(2A)$. Observing that $2A+L=2A\ne\Z_n$, we apply
Lemma~\refl{MofA}. The inequality of the lemma is clearly violated, showing
that $L$ is the zero subgroup.
\end{proof}

\begin{lemma}\label{l:AuL}
Suppose that Theorem~\reft{aux} is wrong. If $A\seq\Z_n$ is a counterexample
with $n$ smallest possible, then $|A+L|\ge|A|+|L|$ holds for any nonzero,
proper subgroup $L<\Z_n$.
\end{lemma}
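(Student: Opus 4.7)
The plan is to suppose, for a contradiction, that $|A+L|<|A|+|L|$ for some nonzero, proper subgroup $L<\Z_n$. Write $\cA:=\phi_L(A)$, $k:=|\cA|$, and $A_a:=A\cap(a+L)$ for each $a\in\cA$. Since $A$ is not contained in a proper coset one has $k\ge 2$, and the hypothesis rewrites as
\[
  \sum_{a\in\cA}\bigl(|L|-|A_a|\bigr) \;=\; |A+L|-|A| \;<\; |L|.
\]
Calling $a\in\cA$ \emph{heavy} when $|A_a|>|L|/2$, it follows that at most one $a\in\cA$ is non-heavy, since each non-heavy coset alone contributes at least $|L|/2$ to the gap sum.

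If every $a\in\cA$ is heavy, then pigeonhole applied to the translates $A_a-a,\,A_b-b\sbs L$ (each of size exceeding $|L|/2$) yields $A_a+A_b=(a+b)+L$ for all $a,b\in\cA$; taking unions gives $2A\supseteq 2A+L$, so $2A=2A+L$ is $L$-periodic, contradicting Lemma~\refl{2Aper}.

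Otherwise let $z_0\in\cA$ be the unique non-heavy element, set $\cA':=\cA\stm\{z_0\}$, and call $t\in 2\cA$ \emph{bad} if $t\notin\cA'+\cA'$. Every bad $t$ then has the form $t=z_0+a$ for some $a\in\cA$, and using the trivial bounds $|A_{z_0}+A_a|\ge|A_a|$ and $|2A_{z_0}|\ge|A_{z_0}|$, the deficit $|L|-|2A\cap(t+L)|$ is at most $|L|-|A_a|$. Since the map $t\mapsto t-z_0$ identifies the bad cosets injectively with a subset of $\cA$, summing gives
\[
  |2A+L|-|2A| \;\le\; \sum_{a\in\cA}\bigl(|L|-|A_a|\bigr) \;=\; |A+L|-|A|.
\]
When $2A+L\ne\Z_n$, this directly contradicts Lemma~\refl{MofA}.

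The principal obstacle is the remaining sub-case $2A+L=\Z_n$, in which Lemma~\refl{MofA} does not apply. Here the displayed inequality still yields $|2A|>n-|L|$; if $\cA'+\cA'=\Z_n/L$ then every $L$-coset is contained in $2A$, so $2A=\Z_n$, contradicting $|2A|<n$. In the surviving configurations, one combines the proper inclusion $\cA'+\cA'\sbs\Z_n/L$ (analysed via Kneser's theorem in $\Z_n/L$) with the density bound $|A|>\frac{4}{9}(n-|L|)$ (a consequence of $|2A|<\frac{9}{4}|A|$ and $|2A|>n-|L|$) and the very large value $C_0=2.4\cdot 10^4$, to show that $|2A|-|A|>n/C_0$ in every surviving case, so conclusion~(i) of Theorem~\reft{aux} applies to $A$, contradicting its assumed status as a counterexample.
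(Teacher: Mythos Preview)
Your argument is structurally sound and, up through the displayed inequality
\[
  |2A+L|-|2A|\le |A+L|-|A|,
\]
essentially coincides with the paper's proof (the paper does not split off the ``all heavy'' sub-case separately, but your treatment via Lemma~\refl{2Aper} is a clean alternative). The invocation of Lemma~\refl{MofA} when $2A+L\ne\Z_n$ is also exactly the paper's move.

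The genuine gap is your final paragraph, the sub-case $2A+L=\Z_n$. You assert that combining the proper inclusion $\cA'+\cA'\subsetneq\Z_n/L$ ``analysed via Kneser's theorem'' with the bound $|A|>\tfrac49(n-|L|)$ yields $|2A|-|A|>C_0^{-1}n$, but you do not carry this out, and it is not clear how Kneser applied to $\cA'+\cA'$ would produce the needed bound on $|2A|-|A|$. In fact this detour is unnecessary: your own displayed inequality, with $|2A+L|=n$, already gives
\[
  |2A|-|A|\ \ge\ n-|A+L|,
\]
so all that remains is to bound $|A+L|$ away from $n$. This is elementary. First, $|2A|<n$ forces $|A|\le n/2$ (pigeonhole: if $|A|>n/2$ then $2A=\Z_n$). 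Second, $|L|\le n/3$: the only proper subgroup with $|L|>n/3$ has $|L|=n/2$, but then $A$ (not lying in a proper coset) meets both $L$-cosets, so $|A+L|=n$, whence $n<|A|+|L|=|A|+n/2$ gives $|A|>n/2$, a contradiction. Therefore
\[
  |A+L|<|A|+|L|\le \tfrac{n}{2}+\tfrac{n}{3}=\tfrac{5n}{6},
\]
and so $|2A|-|A|\ge n-|A+L|>n/6>C_0^{-1}n$, contradicting the counterexample hypothesis. This is precisely the paper's conclusion of the argument; your Kneser-based sketch should be replaced by these three lines.
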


\begin{proof}
Since $A\seq\Z_n$ satisfies the assumptions of Theorem~\reft{aux}, it is not
contained in a proper coset, and $2\le|2A|<\min\{\frac94\,|A|,n\}$. Suppose
for a contradiction that, in addition, we also have
\begin{equation}\label{e:A+L<}
  |A+L|<|A|+|L|
\end{equation}
with $L<\Z_n$ nonzero and proper. Since $|2A|<n$ implies $|A|\le\frac12\,n$
by the pigeonhole principle, and since the properness of $L$ implies
$|L|\le\frac12\,n$, as a consequence of~\refe{A+L<} we have $|A+L|<n$. Thus,
there is an $L$-coset disjoint with $A$, and since $A$ is not contained in a
proper coset, we conclude that $|L|\le\frac13\,n$. Re-using~\refe{A+L<}, we
now get
\begin{equation}\label{e:ALnew}
  |A+L| < \frac56\,n.
\end{equation}

Consider the coset decomposition
  $$ A = (a_0+L_0)\cup(a_1+L_1)\longu(a_k+L_k), $$
where $L_0,L_1\longc L_k\seq L$ are nonempty, $a_0,a_1\longc a_k\in A$, and
$a_i\not\equiv a_j\pmod L$. Renumbering, we further assume that
$|L_0|\ge|L_1|\longge|L_k|>0$. From
  $$ (|L|-|L_0|) + (|L|-|L_1|) \longp(|L|-|L_k|) = |A+L|-|A| < |L| $$
we derive that $|L_i|+|L_j|>|L|$, and therefore
$(a_i+L_i)+(a_j+L_j)=a_i+a_j+L$ for all $i,j\in[0,k]$, with the only possible
exception of $i=j=0$.

As a result,
\begin{equation}\label{e:X3a}
  |2A+L|-|2A| = |L|-|2L_0| \le |L|-|L_0| \le |A+L|-|A|,
\end{equation}
and applying Lemma~\refl{MofA}, we conclude that $2A+L=\Z_n$. Substituting
this back to \refe{X3a} and using~\refe{ALnew}, we obtain
  $$ |2A|-|A| \ge n-|A+L| > \frac16\,n. $$
Therefore $A$ satisfies the condition of Theorem~\reft{aux}~i), a
contradiction.
\end{proof}

\begin{lemma}\label{l:kemp1}
Suppose that Theorem~\reft{aux} is wrong. If $A\seq\Z_n$ is a counterexample
with $n$ smallest possible, then for any subset $B\seq\Z_n$ with
 $|A|\ge|B|\ge 2$ we have $|A+B|\ge|A|+|B|$.
\end{lemma}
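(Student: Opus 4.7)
The plan is to argue by contradiction: assume $|A+B|\le|A|+|B|-1$, and derive that $A$ must satisfy one of the structural conclusions i)--iii) of Theorem~\reft{aux}, which contradicts the counterexample status of $A$. The reduction proceeds via Kneser's theorem, then Kemperman's theorem (Theorem~\reft{kemp}), then a short case analysis of elementary pairs.

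I would first examine $H:=\pi(A+B)$. If $H$ is proper and nonzero, then Lemma~\refl{AuL} gives $|A+H|\ge|A|+|H|$, and Kneser's formula $|A+B|=|A+H|+|B+H|-|H|$ yields $|A+B|\ge|A|+|B|$---a contradiction. If $H=\Z_n$, then $|A+B|=n$; but the pigeonhole principle and $|2A|<n$ force $|A|\le n/2$, and hence $n\le|A|+|B|-1\le n-1$---again a contradiction. So $H=\{0\}$, and the same pigeonhole argument also gives $A+B\ne\Z_n$. Then Theorem~\reft{kemp} supplies a proper subgroup $K<\Z_n$ with $|A+K|-|A|\le|K|-1$ and $(\phi_K(A),\phi_K(B))$ elementary in $\Z_n/K$; Lemma~\refl{AuL} rules out $K\ne\{0\}$, so $(A,B)$ itself is an elementary pair in $\Z_n$.

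Three of the four elementary cases are quick. Case i) is forbidden by $|B|\ge 2$. In case ii), both $A$ and $B$ are arithmetic progressions with a common difference $d\in\Z_n$ of order at least $|A|+|B|-1$; since $A$ is not contained in a proper coset, $d$ must generate $\Z_n$, so $|2A|=2|A|-1$. Taking $P:=A$ and the trivial subgroup then shows $A$ falls in case~ii) of Theorem~\reft{aux}, a contradiction. In case iii), the underlying subgroup in the definition of elementary must be all of $\Z_n$ (otherwise $A$ would lie in a proper coset), which forces $|A|+|B|=n+1$, incompatible with $|A|,|B|\le n/2$.

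The only case requiring care is iv), where analogous coset reasoning gives $|A|+|B|=n$ and hence $|A|=|B|=n/2$. The trick is to bring in Lemma~\refl{1.5}: since $A$ is not in a proper coset and $|A|=n/2<\frac23 n$, that lemma shows $A$ cannot be a VSDS, so $|2A|\ge\frac32|A|=\frac34 n$, which gives $|2A|-|A|\ge n/4>C_0^{-1}n$. Hence $A$ satisfies case~i) of Theorem~\reft{aux}, the final contradiction. The main obstacle is exactly this case iv): the elementary-pair classification is genuinely nontrivial there, and one has to translate the abstract structure into the concrete cardinality $|A|=n/2$ and then feed it into Lemma~\refl{1.5} to extract the quantitative conclusion.
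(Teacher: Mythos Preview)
Your proof is correct and follows essentially the same route as the paper: reduce to $(A,B)$ being an elementary pair via Theorem~\reft{kemp} and Lemma~\refl{AuL}, then dispatch the four elementary types. The only notable difference is in the endgame for type~iv): where you invoke Lemma~\refl{1.5} to get $|2A|\ge\tfrac32|A|$ once $|A|=n/2$, the paper instead uses Lemma~\refl{2Aper} together with Kneser's theorem to obtain the sharper $|2A|\ge 2|A|-1$; either bound is enough to land in case~i) of Theorem~\reft{aux}.
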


\begin{proof}
Suppose that $|A|\ge|B|\ge 2$ and $|A+B|<|A|+|B|$. Observing that these
assumptions along with $|A|\le\frac12\,n$ (following from $2A\ne\Z_n$) give
$|A+B|<n$, we apply Theorem~\reft{kemp}. By the theorem, there is a finite,
proper subgroup $L<\Z_n$ such that $|A+L|\le|A|+|L|-1$ and
$(\phi_L(A),\phi_L(B))$ is an elementary pair in the quotient group $\Z_n/L$.
By Lemma~\refl{AuL}, we have $L=\{0\}$; thus, $(A,B)$ is an elementary pair
in the original group $\Z_n$. Inspecting the list of elementary pairs from
Section~\refs{kkr}, we see that $(A,B)$ is neither type i) nor type ii) (if
$A$ were an arithmetic progression, it would be regular.) Thus, $(A,B)$ is
elementary of type iii) or iv). In each of these cases, there is a subgroup
$H\le\Z_n$ such that each of $A$ and $B$ is contained in an $H$-coset, and
$|A|+|B|\ge|H|$. Since $A$ is not contained in a proper coset, we actually
have $H=\Z_n$, and then $2|A|\ge|A|+|B|\ge n$ whence $|A|\ge\frac12\,n$.
Combined with the observation at the beginning of the proof, this gives
$|A|=\frac12\,n$.

On the other hand, since $2A$ is aperiodic (Lemma~\refl{2Aper}), by Kneser's
theorem we have $|2A|\ge2|A|-1$. Therefore $|2A|-|A|\ge|A|-1=\frac12\,n-1\ge
C_0^{-1}n$, the last estimate following from $n=2|A|\ge 4$. This shows that
$A$ satisfies the inequality of Theorem~\reft{aux}~i).
\end{proof}

\begin{lemma}\label{l:A'+A''}
Suppose that Theorem~\reft{aux} is wrong. If $A\seq\Z_n$ is a counterexample
with $n$ smallest possible, then for any pair of nonempty subsets
$A',A''\seq\Z_n$ with $A'\cup A''=A$, we have
$|A'+A''|\ge\min\{|A'|+|A''|-1,n\}$.
\end{lemma}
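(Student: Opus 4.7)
My plan is to argue by contradiction: assume $|A'+A''| < \min\{|A'|+|A''|-1,n\}$, and derive a lower bound on $|A'+A''|$ that exceeds $|A'|+|A''|-1$ by combining Kneser's theorem applied to the pair $(A',A'')$ with Lemma~\refl{AuL} applied to $A$.

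The first step is to set up the stabilizer. The inequality $|A'+A''|<n$ gives $A'+A''\ne\Z_n$, while $|A'+A''|<|A'|+|A''|-1$ together with the corollary to Kneser's theorem forces the period $L:=\pi(A'+A'')$ to be nonzero. Since $A'+A''\ne\Z_n$, the subgroup $L$ is also proper, so Lemma~\refl{AuL} is applicable and yields
\[
   |A+L| \ge |A|+|L|.
\]

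The second step is a two-line manipulation. Kneser's theorem itself gives the equality
\[
  |A'+A''| = |A'+L|+|A''+L|-|L|,
\]
while inclusion-exclusion, applied to $A+L=(A'\cup A'')+L=(A'+L)\cup(A''+L)$, yields
\[
  |A+L| = |A'+L|+|A''+L|-\bigl|(A'+L)\cap(A''+L)\bigr|.
\]
Subtracting these two identities produces
\[
  |A'+A''| = |A+L| + \bigl|(A'+L)\cap(A''+L)\bigr| - |L|.
\]
Now apply Lemma~\refl{AuL} to the first term and use the trivial inclusion $A'\cap A''\seq(A'+L)\cap(A''+L)$ on the second. Together with the identity $|A'|+|A''|=|A|+|A'\cap A''|$, this gives
\[
  |A'+A''| \ge |A|+|L|+|A'\cap A''|-|L| = |A'|+|A''|,
\]
which contradicts the standing assumption and completes the plan.

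I do not anticipate any serious obstacle here: the only care needed is to verify up front that $L$ is simultaneously nonzero (to invoke Kneser) and proper (to invoke Lemma~\refl{AuL}), and both facts fall out of the two inequalities hidden in the $\min$. The conceptual content of the argument is that one should not feed $2A$ itself into Kneser's theorem, but rather the potentially smaller sumset $A'+A''$, and then relate its $L$-saturations back to $A=A'\cup A''$ by inclusion-exclusion.
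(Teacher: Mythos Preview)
Your proof is correct and follows essentially the same approach as the paper: both set $L:=\pi(A'+A'')$, verify it is nonzero and proper, invoke Kneser's equality $|A'+A''|=|A'+L|+|A''+L|-|L|$, and play this against Lemma~\refl{AuL}. The only cosmetic difference is in how the bridge between $|A+L|$ and $|A'+L|+|A''+L|$ is built: the paper does a coset-by-coset count to show $|A+L|-|A|\le(|A'+L|-|A'|)+(|A''+L|-|A''|)$, while you obtain the equivalent inequality in one line via inclusion--exclusion on $(A'+L)\cup(A''+L)$ together with $|(A'+L)\cap(A''+L)|\ge|A'\cap A''|$; your packaging is slightly cleaner.
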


\begin{proof}
Assuming $|A'+A''|<|A'|+|A''|-1$ and $|A'+A''|<n$, let $L:=\pi(A'+A'')$.
Notice that $L$ is nonzero by Kneser's theorem, and that $L$ is proper as
otherwise we would have $|A'+A''|=n$.

Let $g_1\longc g_k$ be representatives of the $L$-cosets determined by $A$.
We have
\begin{align*}
  |A+L|-|A|
    &= \sum_{i=1}^k (|L|-|(g_i+L)\cap A|) \\
    &\le \sum_{\substack{1\le i\le k \\ (g_i+L)\cap A'\ne\est}}
                                                       (|L|-|(g_i+L)\cap A|)
            + \sum_{\substack{1\le i\le k \\ (g_i+L)\cap A''\ne\est}}
                                                    (|L|-|(g_i+L)\cap A|) \\
    &\le \sum_{\substack{1\le i\le k \\ (g_i+L)\cap A'\ne\est}}
                                                      (|L|-|(g_i+L)\cap A'|)
           + \sum_{\substack{1\le i\le k \\ (g_i+L)\cap A''\ne\est}}
                                                  (|L|-|(g_i+L)\cap A''|) \\
    &= (|A'+L|-|A'|) + (|A''+L|-|A''|).
\end{align*}
By Kneser's theorem and the assumption $|A'+A''|<|A'|+|A''|-1$, the
right-hand side is
  $$ |A'+A''| + |L| - |A'| - |A''| < |L|. $$
Thus, $|A+L|-|A|<|L|$, contradicting Lemma~\refl{AuL}.
\end{proof}

\begin{lemma}\label{l:numbers}
Suppose that Theorem~\reft{aux} is wrong. If $A\seq\Z_n$ is a counterexample
with $n$ smallest possible, then $4\le |A|\le C_0^{-1}n$ and $8\le|2A|\le
2C_0^{-1}n$.
\end{lemma}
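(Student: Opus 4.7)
The plan is to treat the four inequalities separately, relying on the structural properties of the minimum counterexample established in Lemmas~\refl{2Aper} and~\refl{kemp1}, together with the three-element classification of Lemma~\refl{3cyclic}.

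First I would prove $|A|\ge 4$ by ruling out $|A|\in\{1,2,3\}$. The case $|A|=1$ is immediate because every singleton lies in the proper coset $a+\{0\}$ whenever $n\ge 2$, contradicting the assumption that $A$ is not contained in a proper coset. For $|A|=2$, writing $A=\{a,a+d\}$, the non-containment hypothesis forces $\langle d\rangle=\Z_n$; then $|2A|=3$ and $A$ satisfies conclusion~(ii) of Theorem~\reft{aux} with $P=A$, $H=\{0\}$, so it is not a counterexample. For $|A|=3$ I would apply Lemma~\refl{3cyclic}: the three-element-coset subcase and the 3-term AP subcase with difference of order exactly $4$ both place $A$ inside a proper coset (or else force $n\in\{3,4\}$, where $|2A|=n$ violates the hypothesis); the remaining subcases --- a 3-term AP with difference of order $\ge 5$, the involution configuration $\{a,a+d,b\}$, and the generic set with $|2A|=6$ --- match conclusion~(ii) or~(iii) of Theorem~\reft{aux} directly with $H=\{0\}$, $H=\langle d\rangle$, and $H=\{0\}$ respectively, the required inequality $(|P|-1)|H|\le|2A|-|A|$ or $3|H|\le|2A|-|A|$ being trivially verifiable in each case.

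With $|A|\ge 4$ established, I would apply Lemma~\refl{kemp1} with $B=A$ (whose hypothesis $|A|\ge|B|\ge 2$ holds trivially) to obtain $|2A|\ge 2|A|\ge 8$. The two upper bounds then follow from the failure of conclusion~(i) of Theorem~\reft{aux}, which reads $|2A|-|A|<C_0^{-1}n$: combined with $|2A|\ge 2|A|$ this yields $|A|\le|2A|-|A|<C_0^{-1}n$, and hence $|2A|<|A|+C_0^{-1}n<2C_0^{-1}n$. The main obstacle is the $|A|=3$ branch of step one: each of the four subcases of Lemma~\refl{3cyclic} has to be paired with the correct subgroup $H$ to display the regular or singular structure, though the individual verifications are mechanical once the choice is made. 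The other three bounds are essentially one-line consequences of Lemma~\refl{kemp1} together with the assumption that case~(i) of Theorem~\reft{aux} fails.
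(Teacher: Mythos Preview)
Your proof is correct. The paper takes a shorter route: it invokes Lemma~\refl{kemp1} with $B=A$ at the very start (valid as soon as $|A|\ge 2$), obtaining $|2A|\ge 2|A|$ and hence both upper bounds in one stroke. That same inequality then kills $|A|=2$ immediately (since trivially $|2A|\le 3<4$) and, for $|A|=3$, forces $|2A|\ge 6$; consequently $A$ is not an arithmetic progression and is singular with $H=\{0\}$, with no need for the subcase breakdown of Lemma~\refl{3cyclic}. Your version instead defers Lemma~\refl{kemp1} until $|A|\ge 4$ is already established and handles $|A|\in\{2,3\}$ by exhibiting the regular or singular structure (or a proper-coset containment) directly in each subcase of Lemma~\refl{3cyclic}. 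Both arguments are valid; the paper's ordering simply lets the single bound $|2A|\ge 2|A|$ absorb what you treat as several separate verifications.
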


\begin{proof}
Applying Lemma~\refl{kemp1} with $B=A$ we get $|2A|\ge2|A|$, resulting in
  $$ 2\le |A| \le |2A|-|A| \le C_0^{-1}n $$
and, consequently, in 
  $$ |2A| \le |A| + C_0^{-1}n \le 2C_0^{-1}n. $$
It remains to show that $|A|\ge 4$ and, therefore, $|2A|\ge 8$.

We thus have to treat the cases where $|A|=2$ and $|A|=3$. If $|A|=2$, then
(trivially) $|2A|\le 3$, contradicting Lemma~\refl{kemp1} (applied with
$B=A$). If $|A|=3$, then $|2A|\ge 6$ by Lemma~\refl{kemp1} and therefore $A$
is not an arithmetic progression. Moreover, taking $H=\{0\}$ we have
$3|H|\le|2A|-|A|$; thus, $A$ is singular, a contradiction.
\end{proof}

\mysection{The case where $A$ meets at most two cosets}\label{s:twocoset}

The goal of this section is to prove the following result.
\begin{lemma}\label{l:twocoset}
Suppose that Theorem~\reft{aux} is wrong, and that $A\seq\Z_n$ is a
counterexample with $n$ smallest possible. Then $A$ meets at least three
cosets of any subgroup $F<\Z_n$ of index $|\Z_n/F|\ge 3$.
\end{lemma}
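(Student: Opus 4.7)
The plan is to argue by contradiction. Suppose $A$ meets at most two cosets of some subgroup $F<\Z_n$ with $[\Z_n:F]\ge 3$; since $A$ is not contained in a proper coset (by our standing assumption on counterexamples), it must meet exactly two, say $g_1+F$ and $g_2+F$, and I write $A=A_1\cup A_2$ with $A_i:=A\cap(g_i+F)$ and $|A_1|\ge|A_2|\ge 1$. Passing to the quotient, $\phi_F(A)=\{\o{g_1},\o{g_2}\}$ is a two-element set, and $\phi_F(2A)=\{2\o{g_1},\,\o{g_1}+\o{g_2},\,2\o{g_2}\}$; the only possible coincidence among these three elements is $2\o{g_1}=2\o{g_2}$, which occurs exactly when $\o{g_1}-\o{g_2}$ is the unique involution of the cyclic group $\Z_n/F$.

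In the involution case, $|\phi_F(2A)|=2$, so $|2A+F|=2|F|<n$ because $[\Z_n:F]\ge 3$. Lemma~\refl{MofA} applied with $L=F$ reads $|2A+F|-|2A|>|A+F|-|A|$, i.e., $2|F|-|2A|>2|F|-|A|$, giving $|2A|<|A|$---absurd.

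I may therefore assume $2\o{g_1},\,\o{g_1}+\o{g_2},\,2\o{g_2}$ are pairwise distinct; then $|2A+F|=3|F|$ and $2A=2A_1\cup(A_1+A_2)\cup 2A_2$ decomposes disjointly along the three $F$-cosets. When $[\Z_n:F]\ge 4$, Lemma~\refl{MofA} is applicable and yields $|2A|-|A|<|F|$; combined with $|2A|\ge 2|A|$ from Lemma~\refl{kemp1}, this forces $|A|<|F|$, and Lemma~\refl{A'+A''} then gives $|A_1+A_2|\ge|A|-1$ and hence $|2A_1|+|2A_2|<|F|+1$. When $[\Z_n:F]=3$, Lemma~\refl{MofA} no longer applies (as $2A+F=\Z_n$), but $|A|\le 2|F|$, $|2A|<\tfrac{9}{4}|A|$, and Lemma~\refl{A'+A''} still combine to force $|2A_1|+|2A_2|\le\tfrac{5}{4}|A|$.

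The main obstacle is closing these remaining sub-cases. My plan is to translate $A_i':=A_i-g_i\seq F$ and treat them as subsets of the strictly smaller cyclic group $F$ with tightly constrained doubling. Kneser's theorem on $2A_i'$ together with Lemma~\refl{1.5} would decide when each $A_i'$ is a VSDS in $F$; in that event the period $H_i:=A_i'-A_i'$ is a proper subgroup of $F$, and the hard combinatorial work lies in harmonizing the pair $(H_1,H_2)$ into a single proper subgroup $H<\Z_n$ and an arithmetic progression $P\seq\Z_n$ (refining the coarse progression $\{g_1,g_2\}$) such that $A\seq P+H$ and $(|P|-1)|H|\le|2A|-|A|$, placing $A$ in Theorem~\reft{aux}~(ii) and contradicting its status as a counterexample. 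Where the VSDS dichotomy fails, rectifiability of the two-element set $\phi_F(A)$ (provided $\o{g_1}-\o{g_2}$ is not an involution, which it is not in the case at hand) combined with Lemma~\refl{rect2A} and Lemma~\refl{kemp} should enable an appeal to Proposition~\refp{combo} applied to $A$ with a suitable intermediate subgroup, reaching the same conclusion. The delicate sub-case is the asymmetric one where $|A_2|$ is very small (down to $|A_2|=1$), in which the combinatorial structure degenerates and the translation/harmonization must be done directly on $A_1$ while keeping careful track of the contribution of $A_2$ to the doubling budget.
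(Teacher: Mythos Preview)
Your opening moves are fine, but two remarks.  First, the involution sub-case is vacuous: if $\o{g_1}-\o{g_2}$ had order $2$ in $\Z_n/F$ with $[\Z_n:F]\ge 3$, then $A$ would lie in a coset of $\phi_F^{-1}(\langle\o{g_1}-\o{g_2}\rangle)$, a proper subgroup, contradicting the standing hypothesis.  The paper simply records that $\phi_F(g_2-g_1)$ generates $\Z_n/F$.  Second, your bounds $|2A_1|+|2A_2|<|F|+1$ (index $\ge 4$) and $|2A_1|+|2A_2|\le\frac54|A|$ (index $3$) are correct but do not by themselves force $A_1$ to be a VSDS; for instance, if $|F|$ is large compared with $|A|$, the first bound says nothing about $|2A_1|/|A_1|$.

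The real gap is in your last paragraph, where the proposed tools do not apply.  Proposition~\refp{combo} needs $|2A|<3(1-1/s)|A|$ with $s=|\phi_L(A)|$; for $s=2$ this reads $|2A|<\frac32|A|$, far stronger than the available $|2A|<\frac94|A|$, so no choice of $L$ with $|\phi_L(A)|=2$ will work.  Lemma~\refl{kemp} requires $B$ rectifiable and \emph{not} an arithmetic progression, but any two-element set is a progression.  The ``harmonize $(H_1,H_2)$'' idea is also off track: the paper never looks at $A_2-A_2$.  What the paper actually does is (i) prove directly that $A_1$ is a VSDS, treating $|A_2|\ge 2$ via Lemma~\refl{kemp1} (giving $|2A_1|\le|2A|-|A|-|A_2|<\frac14|A|+|A_1|\le\frac32|A_1|$) and $|A_2|=1$ by a separate computation invoking Kneser on $2A_1$; (ii) set $L:=A_1-A_1\le F$ and decompose $A_2=B_1\cup\cdots\cup B_k$ into $L$-coset pieces, showing $k\le 2$; (iii) dispose of $k=1$ in one line ($A$ is $2$-regular), and for $k=2$ run a case analysis on whether $A_1$ and $B_1$ are full cosets, using Lemma~\refl{A'+A''} and aperiodicity of $2A$ (Lemma~\refl{2Aper}) to squeeze out either $|2A|\ge\frac94|A|$ or a regular structure.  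Your sketch does not reach step (i), and the substitute machinery you propose for the rest is inapplicable.
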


\begin{proof}
Suppose for a contradiction that $A$ meets at most two $F$-cosets. Since $A$
is not contained in a proper coset, this means that, in fact, $A$ meets
\emph{exactly} two $F$-cosets; say, $A=A_1\cup A_2$ with $A_i\seq g_i+F\
(i\in\{1,2\})$ and $g_1\not\equiv g_2\pmod F$. Notice that $\phi_F(g_2-g_1)$
generates $\Z_n/F$ as otherwise $A$ would be contained in a proper coset;
consequently, $2A$ meets exactly three $F$-cosets and
  $$ |2A| = |2A_1|+|A_1+A_2|+|2A_2|
                                   = |A+A_2| + |2A_1| = |A+A_1| + |2A_2|; $$
moreover, $2A_1$, $A_1+A_2$, and $2A_2$ reside in pairwise distinct
$F$-cosets.

Without loss of generality, we assume $|A_1|\ge|A_2|$.

\begin{claim}\label{m:2003a}
$A_1$ is a VSDS.
\end{claim}

\begin{proof}
Suppose first that $|A_2|\ge 2$. In this case $|A+A_2|\ge|A|+|A_2|$ by
Lemma~\refl{kemp1}, and we conclude that
  $$ |2A_1| = |2A| - |A+A_2|
       \le |2A| - |A| - |A_2| = |2A| - 2|A| + |A_1|. $$
Consequently,
  $$ |2A_1| < \frac14\,|A| + |A_1| \le \frac32\,|A_1|. $$

Now suppose that $|A_2|=1$ and, for a contradiction, that
$|2A_1|\ge\frac32\,|A_1|$. We have in this case $|A_1|\ge 3$ by
Lemma~\refl{numbers}, and also
\begin{equation}\label{e:0108b}
  \frac94\,|A| > |2A| = |A+A_2| + |2A_1| = |A| + |2A_1|
\end{equation}
implying
\begin{equation}\label{e:0108a}
  \frac32\,|A_1| \le |2A_1| < \frac54\,|A| = \frac54\,|A_1| + \frac54.
\end{equation}
As a result, $|A_1|\le 4$. In fact, we cannot have $|A_1|=3$ as
$|2A_1|\ge\frac32\,|A_1|$ would then imply $|2A_1|\ge 5$, whence $\frac
54\,|A|>|2A_1|\ge5$ leading to $|A|\ge 5>|A_1|+|A_2|$.

Thus, $|A_1|=4$ and then $|2A_1|=6=2|A_1|-2$ by~\refe{0108a}. Let
$H:=\pi(2A_1)$, and $k:=|A_1+H|/|H|$. By Kneser's theorem, $H$ is nonzero and
$6=|2A_1|=(2k-1)|H|$. It follows that either $k=1$ and $|H|=6$, or $k=2$ and
$|H|=2$. In the former case $A$ is contained in a union of two $H$-cosets
and, by~\refe{0108b},
  $$ |2A|-|A| = |2A_1| = 6 = |H|; $$
therefore, $A$ is $2$-regular. In the latter case $A_1$ is a union of two
$H$-cosets; therefore $A$ is contained in a union of three $H$-cosets and,
by~\refe{0108b},
  $$ |2A| - |A| = |2A_1| = 6 = 3|H|, $$
showing that $A$ is either $3$-regular, or singular.
\end{proof}

\smallskip
We therefore have $|2A_1|<\frac32|A_1|$; consequently, by Lemma~\refl{1.5},
the set $A_1$ is contained in a coset of a subgroup $L<\Z_n$ with
$|A_1|>\frac23|L|$ and $L=A_1-A_1$. Since $A_1$ is contained in an $F$-coset,
we have $L\le F$; consequently, $A_1$ and $A_2$ reside in distinct $L$-cosets
and moreover, the $L$-cosets of $2A_1$, $A_1+A_2$, and $2A_2$ are pairwise
distinct.

Write $A_2=B_1\longu B_k$ where the sets $B_i$ are nonempty, each of them is
contained in an $L$-coset, and the $k$ cosets are pairwise distinct. Since
$|A_1+A_2|=|A_1+B_1|\longp|A_1+B_k|\ge k|A_1|$, we have
  $$ \frac94\,|A| > |2A| = |2A_1|+|A_1+A_2|+|2A_2|
       \ge (k+1)|A_1| + |A_2| \ge \Big(\frac12k+1\Big)|A|, $$
whence $k\le 2$.

If $k=1$ then $A=A_1\cup B_1$. By Lemma~\refl{A'+A''},
  $$ |2A| = |2A_1|+|A_1+B_1|+|2B_1| \ge |L|+(|A|-1) + |B_1|, $$
implying $|2A|-|A|\ge |L|$; therefore $A$ is $2$-regular.

Thus, $k=2$. Without loss of generality, we assume that $|B_1|\ge|B_2|$.

As remarked above, the sets $2A_1$, $A_1+A_2=(A_1+B_1)\cup(A_1+B_2)$, and
$2A_2=2B_1\cup(B_1+B_2)\cup 2B_2$ reside in pairwise distinct $L$-cosets. It
is also immediately seen that the coset of $A_1+B_1$ is distinct from that of
$A_1+B_2$, and that the coset of $B_1+B_2$ is distinct from both the coset of
$2B_1$ and that of $2B_2$. Consequently, in the decomposition
\begin{equation}\label{e:1503a}
  2A = 2A_1 \cup(A_1+B_1)\cup(A_1+B_2)\cup2B_1\cup(B_1+B_2)\cup2B_2
\end{equation}
all six sets in the right-hand side reside in pairwise distinct $L$-cosets,
with the possible exception of the sets $2B_1$ and $2B_2$.

If at least one of $A_1$ and $B_1$ is not a coset of a subgroup of $\Z_n$,
then $|2A_1|+|2B_1|\ge|A_1|+|B_1|+1$; therefore, in view of the disjointness
and by Lemma~\refl{A'+A''},
\begin{align}
  |2A| &\ge |2A_1| + |2B_1|+ |A_1+B_1|+|B_2+(A_1\cup B_1)| \notag \\
       &\ge (|A_1| + |B_1| + 1) + |A_1| + (|A|-1) \notag \\
       &\ge \frac32\,|A_1| + \frac12\,(|A_1|+|B_1|+|B_2|)+|A| \label{e:FS} \\
       &=   \frac32\,|A_1|+\frac32\,|A| \notag \\
       &\ge \frac94|A|, \notag
\end{align}
a contradiction.

Thus, both $A_1$ and $B_1$ are cosets. Moreover, recalling that $A_1$ is
contained in an $L$-coset and $|A_1|\ge\frac23|L|$, we conclude that $A_1$ is
an $L$-coset. Let $K\le L$ be the subgroup such that $B_1$ is a $K$-coset.

If $K\ne\{0\}$, then we notice that the first five sets in the right-hand
side of~\refe{1503a} are $K$-periodic, and since $2A$ is aperiodic by
Lemma~\refl{2Aper}, the set $2B_2$ is not contained in the union of these
five sets. Therefore, as a slight modification of~\refe{FS},
\begin{align*}
  |2A| &\ge |2A_1| + |2B_1|+ |A_1+B_1|+|B_2+(A_1\cup B_1)| + 1 \\
       &\ge (|A_1| + |B_1|) + |A_1| + (|A|-1) + 1  \\
       &\ge \frac32\,|A_1| + \frac12\,(|A_1|+|B_1|+|B_2|) + |A| \\
       &\ge \frac94|A|,
\end{align*}
a contradiction.

We conclude that $A_1$ is an $L$-coset and $|B_1|=1$, as a result of which
also $|B_2|=1$.

If $2B_1\ne 2B_2$ then $|2(B_1\cup B_2)|=3$ and in view of
Lemma~\refl{numbers} we get
\begin{align*}
  |2A| &=   |2A_1| + |A_1+(B_1\cup B_2)| + |2(B_1\cup B_2)| \\
       &= 3|L| + 3 \\
       &=   3|A| - 3 \\
       &\ge \frac94|A|,
\end{align*}
a contradiction.

Therefore $2B_1=2B_2$ and $|2A|=3|L|+2=|A|+2|L|$.

Since $B_1$ and $B_2$ are in distinct $L$-cosets, from $2B_1=2B_2$ we
conclude that $|L|>2$.

If $|L|=3$ then $A$ is a union of an $L$-coset and a coset of the two-element
subgroup. As a result, $A$ is contained in a union of two cosets of the
six-element subgroup $H$ lying above $L$, while $|2A|-|A|=2|L|=|H|$; thus,
$A$ is $2$-regular.

Finally, if $|L|\ge 4$, then $|2A|=3|L|+2\ge\frac94(|L|+2)=\frac94\,|A|$, a
contradiction.
\end{proof}

\mysection{The case where $A$ meets exactly three cosets}
  \label{s:threecosets}

In this section we prove the following result.
\begin{lemma}\label{l:threecosets}
Suppose that Theorem~\reft{aux} is wrong, and that $A\seq\Z_n$ is a
counterexample with $n$ smallest possible. If $L<\Z_n$ is a subgroup such
that $\phi_L(A)$ is rectifiable, then $|\phi_L(A)|\ge 4$; that is, $A$ meets
at least four $L$-cosets.
\end{lemma}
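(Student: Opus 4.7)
The plan is to rule out each of $|\phi_L(A)|\in\{1,2,3\}$. The case $|\phi_L(A)|=1$ puts $A$ in a proper coset, contradicting the counterexample's defining property. For $|\phi_L(A)|=2$, writing $\phi_L(A)=\{\alpha_1,\alpha_2\}$, rectifiability of a two-element set forces $\alpha_2-\alpha_1$ to be a non-involution in $\Z_n/L$ (else $2\alpha_1=2\alpha_2$, a coincidence no map to $\Z$ can preserve), hence $[\Z_n:L]\ge 3$; Lemma~\ref{l:twocoset} then delivers $|\phi_L(A)|\ge 3$, a contradiction.

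The substantive case is $s:=|\phi_L(A)|=3$. Write $\phi_L(A)=\{\alpha_1,\alpha_2,\alpha_3\}$, $A_i:=A\cap(a_i+L)$ with $\phi_L(a_i)=\alpha_i$, and $b_i:=|A_i|$; Lemma~\ref{l:AuL} yields $|A|\le 2|L|$. First, apply Proposition~\ref{p:combo}: since $3(1-1/s)=2$, the inequality $|2A|<2|A|$ produces a coset-progression decomposition realizing case~ii) of Theorem~\ref{t:aux}, contradicting the counterexample status. So I may assume $|2A|\ge 2|A|$. By Lemma~\ref{l:3cyclic} the rectifiable $3$-element set $\phi_L(A)$ is either (a) a $3$-term arithmetic progression in $\Z_n/L$ with difference of order at least $5$ (so $|2\phi_L(A)|=5$), or (b) a non-AP with the six pairwise sums $\alpha_i+\alpha_j$ all distinct (so $|2\phi_L(A)|=6$). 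In case (b) I aim to prove $|2A|-|A|\ge 3|L|$, making $A$ singular with $H=L$; in case (a) I aim to prove $|2A|-|A|\ge 2|L|$, making $A$ a $3$-regular set with $H=L$. Either conclusion contradicts the counterexample assumption.

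Both estimates come from decomposing $|2A|$ as a sum over the $5$ or $6$ distinct $L$-cosets of $2A$, combined with Lemma~\ref{l:kemp1} (giving $|A+A_i|\ge|A|+b_i$ when $b_i\ge 2$), Kneser's bound $|A_i+A_j|\ge b_i+b_j-|\pi(A_i+A_j)|$ sharpened to $|A_i+A_j|=|L|$ by pigeonhole when $b_i+b_j>|L|$, and Lemma~\ref{l:A'+A''} applied to the partition $A=A_i\cup(A\stm A_i)$ to handle singleton $A_i$'s. In case (b), the six $L$-cosets are distinct, so summing $|A+A_i|$ over $i$ yields $|2A|+\sum_{i<j}|A_i+A_j|\ge 4|A|-\eps$ with a small $\eps\in\{0,1,2\}$, which closes the target when combined with $|A|\le 2|L|$ and the upper bound $\sum_{i<j}|A_i+A_j|\le 3|L|$; in case (a) the collapse of $A_1+A_3$ and $2A_2$ into a single $L$-coset is handled with the analogous bookkeeping against the weaker target $2|L|$.

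The main obstacle will be the tight borderline of case (b) where $|A|$ is close to $2|L|$ and $|2A|$ is close to $\frac94|A|$: the crude counting misses the target $|2A|-|A|\ge 3|L|$ by a small additive constant. Recovering this slack requires invoking the aperiodicity of $2A$ (Lemma~\ref{l:2Aper}) to rule out common periods between distinct cross sumsets $A_i+A_j$, together with Lemma~\ref{l:alpha} to eliminate residual sum-coincidence configurations that would otherwise force $\phi_L(A)$ to be a coset of a $3$-element subgroup or part of a $4$-term AP in $\Z_n/L$---both incompatible with the rectifiability hypothesis on $\phi_L(A)$.
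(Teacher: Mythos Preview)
Your handling of $|\phi_L(A)|\le 2$ is fine, and the split of the three-coset case into (a) the arithmetic-progression configuration with $|2\phi_L(A)|=5$ and (b) the non-AP configuration with $|2\phi_L(A)|=6$ is correct. The execution of both cases, however, has genuine gaps.

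In case~(b) your identity $|2A|+\sum_{i<j}|A_i+A_j|\ge 4|A|-\eps$ is right, but bounding $\sum_{i<j}|A_i+A_j|\le 3|L|$ is too crude: it yields $|2A|-|A|\ge 3|A|-\eps-3|L|$, and since Lemma~\refl{AuL} only gives $|A|\le 2|L|$, the target $3|L|$ is reached solely in the extreme configuration $|A|=2|L|$, $\eps=0$. The shortfall is of order $|L|-\max_i|A_i|$, not ``a small additive constant,'' and neither aperiodicity of $2A$ nor Lemma~\refl{alpha} repairs it---the latter requires a fourth element $\bet\notin\phi_L(A)$, which is absent here. The correct move is to bound the cross sum the other way: since $\sum_{i<j}|A_i+A_j|=|2A|-\sum_i|2A_i|\le|2A|-|A|$, your identity becomes $2|2A|\ge 5|A|-\eps$, hence $|2A|\ge\frac52|A|-1$, which contradicts $|2A|<\frac94|A|$ as soon as $|A|\ge 4$. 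This is exactly the paper's disposal of the disjoint-coset sub-case, and it needs no reference to $|L|$ at all.

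Case~(a) is where the substance lies, and ``analogous bookkeeping against the weaker target $2|L|$'' is not a proof. The inequality $|2A|-|A|\ge 2|L|$ with $H=L$ can simply fail: if each $|A_i|$ is small compared to $|L|$, then $|2A|-|A|<\frac54|A|$ can be much less than $2|L|$. The paper's mechanism is a descent to a smaller subgroup: one first shows that some $A_i$ is a VSDS, sets $H:=A_i-A_i\le L$, then argues (through a case split on whether the middle block $A_2$ is itself a VSDS, and several further sub-cases) that every $A_j$ lies in a single $H$-coset and that either $|2A|\ge\frac94|A|$ outright or $|2A|-|A|\ge 2|H|$, so that $A$ is $3$-regular with respect to this smaller $H$. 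Your outline never leaves the scale of $L$ and therefore cannot close the argument.
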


As mentioned in the Introduction, the proof is rather technical and some
readers may prefer to skip it and proceed to the next section.
\begin{proof}

Aiming at a contradiction, we assume that $|\phi_L(A)|\le3$ and then, indeed,
$|\phi_L(A)|=3$ by Lemma~\refl{twocoset}. Let $A=A_1\cup A_2\cup A_3$ be the
$L$-coset decomposition of $A$; thus $2A$ is the union of the sets
  $$ A_1+A_2,\, A_2+A_3,\, A_3+A_1,\, 2A_1,\, 2A_2,\,2A_3. $$
Since $\phi_L(A)$ is rectifiable, by Lemma~\refl{3cyclic}, these sets
determine six pairwise distinct $L$-cosets except that, after a suitable
renumbering, the cosets determined by $2A_2$ and $A_1+A_3$ may coincide.

Suppose first that all the six sets listed are pairwise disjoint. By
Lemma~\refl{kemp1}, for each $i\in[1,3]$ we have
  $$ |A|+|A_i| \le |A+A_i|  = |A_1+A_i| + |A_2+A_i| + |A_3+A_i| $$
except if $|A_i|=1$ in which case the left-hand side must be replaced with
$|A|+|A_i|-1$. Since $|A|\ge 4$ in view of Lemma~\refl{numbers}, there is at
least one index $i$ with $|A_i|>1$. Therefore, taking the sum over all
$i\in[1,3]$ we obtain
  $$ 4|A| - 2 \le 2|2A| - (|2A_1|+|2A_2|+|2A_3|) \le 2|2A| - |A| . $$
Thus $|2A|\ge\frac52\,|A|-1$ and, consequently,
$\frac94\,|A|>\frac52\,|A|-1$; as a result, $|A|\le 3$, contradicting
Lemma~\refl{numbers}.

We therefore assume for the rest of the proof that $A_1+A_3$ is \emph{not}
disjoint from $2A_2$; hence, $2A$ meets exactly five $L$-cosets. Notice that
in this case, for any subgroup $H$ such that each of $A_1$, $A_2$, and $A_3$
is contained in an $H$-coset, the three cosets are in an arithmetic
progression.

We have
  $$ |2A| = |A_1+A_2| + |A_2+A_3|+|2A_1| + |2A_3| + |(A_1+A_3)\cup(2A_2)|; $$
our goal is to show that either
\begin{equation*}
  |2A| \ge \frac94\,|A|,
\end{equation*}
or there is a subgroup $H$ such that each of $A_1,A_2,A_3$ is contained in an
$H$-coset, and
\begin{equation*}
  |2A| \ge |A| + 2|H|
\end{equation*}
(in which case $A$ is $3$-regular). Once any of these estimates gets
established, we have reached a contradiction and the proof is over. We thus
assume that the estimates in question do not hold. We also make the following
assumptions:
\begin{itemize}
\item[i)]   $|A|\ge 4$ (by Lemma~\refl{numbers});
\item[ii)]  $|A+A_i|\ge|A|+|A_i|-1$ for any $i\in\{1,2,3\}$; moreover, if
    $|A_i|>1$, then the term $-1$ in the right-hand side can be dropped (by
    Lemma~\refl{kemp1});
\item[iii)] $|A_i+A_j|+|A_j+A_k|\ge |A|-1$ for any permutation $(i,j,k)$ of
    the index set $\{1,2,3\}$ (by Lemma~\refl{A'+A''} and in view of
    $(A_i+A_j)\cup(A_j+A_k)=A_j+(A_i\cup A_k)$).
\end{itemize}
These assumptions will be used throughout the proof without any further
explanations or references.

\begin{claim}\label{m:5/4}
We have
  $$ |2A_1| + |2A_2| + |2A_3| < \frac54\,|A| + 1. $$
Consequently, at least one of $A_1$, $A_2$, and $A_3$ is a VSDS.
\end{claim}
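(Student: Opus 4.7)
The plan is to decompose $2A$ into its six natural sub-sums and exploit the coset structure already established. The discussion preceding the claim (via Lemma~\refl{3cyclic} and the rectifiability of $\phi_L(A)$) has shown that the only coincidence among the $L$-cosets of $2A_1,2A_2,2A_3,A_1+A_2,A_2+A_3,A_1+A_3$ is $2A_2\equiv A_1+A_3\pmod L$. Consequently, the five sets $2A_1,2A_2,2A_3,A_1+A_2,A_2+A_3$ reside in pairwise distinct $L$-cosets, so that from the disjointness inside $2A$ I get the crude bound
\[
  |2A| \ge |2A_1|+|2A_2|+|2A_3|+|A_1+A_2|+|A_2+A_3|.
\]

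Next I would observe $(A_1+A_2)\cup(A_2+A_3)=A_2+(A_1\cup A_3)$, with the two summands once more in distinct $L$-cosets, so
\[
  |A_1+A_2|+|A_2+A_3|=|A_2+(A_1\cup A_3)|.
\]
Since $A_2\cup(A_1\cup A_3)=A$ and both parts are nonempty, Lemma~\refl{A'+A''} yields $|A_2+(A_1\cup A_3)|\ge\min\{|A|-1,n\}=|A|-1$, the minimum being resolved via $|A|\le n/2<n$ (a consequence of $|2A|<n$ by the pigeonhole principle). Substituting into the previous display and invoking the counterexample hypothesis $|2A|<\tfrac94|A|$ gives
\[
  |2A_1|+|2A_2|+|2A_3|\le|2A|-(|A|-1)<\tfrac54|A|+1,
\]
which is the claimed inequality.

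The ``consequently'' part follows by contraposition. If none of $A_1,A_2,A_3$ were a VSDS, the defining inequality of a VSDS would fail for each, so $|2A_i|\ge\tfrac32|A_i|$ for $i=1,2,3$; summing gives $|2A_1|+|2A_2|+|2A_3|\ge\tfrac32|A|$. Combined with the bound just proved, this forces $\tfrac32|A|<\tfrac54|A|+1$, that is $|A|<4$, contradicting $|A|\ge 4$ from Lemma~\refl{numbers}.

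I do not anticipate any real obstacle: once the five-fold disjoint decomposition is in place, everything reduces to a single application of Lemma~\refl{A'+A''}, the only slightly delicate point being the routine verification that the minimum in Lemma~\refl{A'+A''} is attained at $|A|-1$ rather than $n$.
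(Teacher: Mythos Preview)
Your proof is correct and follows essentially the same route as the paper: both use the disjointness of the five sets $2A_1,2A_2,2A_3,A_1+A_2,A_2+A_3$, bound $|A_1+A_2|+|A_2+A_3|\ge |A|-1$ via Lemma~\refl{A'+A''} (this is precisely assumption~iii) set up just before the claim), and combine with $|2A|<\tfrac94|A|$; the ``consequently'' part is likewise handled identically, by summing $|2A_i|\ge\tfrac32|A_i|$ and invoking $|A|\ge 4$ from Lemma~\refl{numbers}.
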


\begin{proof}
The first assertion follows from
\begin{multline*}
   \frac94\,|A| > |2A|
        \ge (|A_1+A_2| + |A_2+A_3|) + (|2A_1| + |2A_2| + |2A_3|) \\
          \ge |A|-1 + (|2A_1| + |2A_2| + |2A_3|),
\end{multline*}
the second is an immediate corollary of the definition of a VSDS and
Lemma~\refl{numbers}.
\end{proof}

\begin{claim}\label{m:singletons}
Among the sets $A_1$, $A_2$, and $A_3$, at most one is a singleton; thus,
$|A|\ge 5$.
\end{claim}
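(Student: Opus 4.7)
The plan is to assume for contradiction that at least two of $A_1,A_2,A_3$ are singletons. Since $A_1$ and $A_3$ play symmetric roles as the two ``outer'' cosets flanking the ``middle'' one $A_2$, up to swapping them I need only handle two subcases: subcase~(a) with $|A_1|=|A_3|=1$ (so $A_2$ is the unique non-singleton), and subcase~(b) with $|A_1|=|A_2|=1$ (so $A_3$ is the non-singleton). Write $A_*$ for the unique non-singleton, so $|A_*|=|A|-2\ge 2$.

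The first step will be to extract the uniform lower bound $|2A|\ge 3|A|-4$ in both subcases. Decomposing $2A$ into the five pairwise disjoint $L$-cosets listed at the beginning of the section --- namely $2A_1$, $A_1+A_2$, $2A_2\cup(A_1+A_3)$, $A_2+A_3$, and $2A_3$ --- I read off directly the sizes of the four pieces that involve a singleton, and bound the remaining piece by $|2A_*|\ge|A_*|$. In either subcase the five contributions add to at least $3|A|-4$. Combined with the standing hypothesis $|2A|<\frac94|A|$, this forces $|A|<\frac{16}{3}$, hence $|A|\in\{4,5\}$ in view of Lemma~\refl{numbers}.

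The second and final step will be a short finite case check on the four pairs $(\text{subcase},|A|)$. In each pair, squeezing $|2A|$ between the lower bound just derived and the upper bound $\frac94|A|$ leaves exactly one admissible integer value of $|2A|$, and this simultaneously forces $|2A_*|=|A_*|$ and forces the overlap coset to collapse (so $A_1+A_3\subseteq 2A_2$ in subcase~(a), or $2a_2\in A_1+A_3$ in subcase~(b)). The equality $|2A_*|=|A_*|$ identifies $A_*$ as a coset of the subgroup $H:=A_*-A_*$, necessarily with $|H|\in\{2,3\}$; each of the two singletons sits trivially in a single $H$-coset; and the observation recorded earlier in the section --- that in the present setting any subgroup $H$ containing each of $A_1,A_2,A_3$ in a single coset forces the three cosets to lie in an arithmetic progression in $\Z_n/H$ --- lifts to $A\subseteq P+H$ for a three-term arithmetic progression $P$ in $\Z_n$. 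A direct check using the pinned-down value of $|2A|$ yields $(|P|-1)|H|=2|H|=|2A|-|A|$, making $A$ three-regular and contradicting its counterexample status. The only real bookkeeping is to verify in each of the four cases that the squeezed value of $|2A|$ really does pin down both $|2A_*|=|A_*|$ and the overlap collapse, but this is immediate from the explicit size formulas $|2A|=2+2|A_*|+|(A_1+A_3)\cup 2A_*|$ (subcase~(a)) and $|2A|=2+|A_*|+|2A_*|+|(A_1+A_3)\cup\{2a_2\}|$ (subcase~(b)).
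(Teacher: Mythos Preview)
Your argument is correct and rests on the same ingredients as the paper's proof: the five–coset decomposition of $2A$, the fact that the non-singleton block $A_*$ must be a coset, and the observation (already recorded in the section) that the three $H$-cosets then lie in arithmetic progression, making $A$ three-regular.

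The only difference is organizational. You first use the coarse bound $|2A|\ge 3|A|-4$ to force $|A|\in\{4,5\}$, then run a four-way finite check to pin down $|2A|$ exactly and extract $|2A_*|=|A_*|$. The paper skips this detour: from the same decomposition it writes $|2A|\ge 2+2|A_*|+|2A_*|$ and splits directly on whether $A_*$ is a coset. If it is not, then $|2A_*|\ge|A_*|+1$ gives $|2A|\ge 3|A|-3\ge\frac94|A|$ for all $|A|\ge4$; if it is, then $|2A|-|A|\ge 2|A_*|=2|H|$ yields three-regularity immediately, with no restriction on $|A|$. Your route works, but the case split on $|A|$ is unnecessary, and the ``overlap collapse'' you mention is already automatic from the standing assumption that $A_1+A_3$ meets $2A_2$ (one of the two sides being a singleton in each subcase).
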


\begin{proof}
Suppose first that $|A_1|=|A_2|=1$. Then $|A|=|A_3|+2$ and if $A_3$ is not a
coset, then
\begin{multline*}
  |2A| \ge |A_1+A_3| + |A_2+A_3| + |2A_3| + |2A_1| + |A_1+A_2| \\
       =   2|A_3| + |2A_3| + 2
       \ge 3|A_3| + 3
       = 3|A|-3
       \ge \frac94\,|A|,
\end{multline*}
as wanted. If, on the other hand, $A_3$ is a coset, then arguing the same way
we get $|2A|\ge 3|A|-4$; that is, $|2A|-|A|\ge 2|A|-4=2|A_3|$ showing that
$A$ is $3$-regular.

Similarly, if $|A_1|=|A_3|=1$, then $|A|=|A_2|+2$ and either
\begin{multline*}
  |2A|\ge |A_1+A_2| + |2A_2| + |A_2+A_3| + |2A_1| + |2A_3| \\
                   = 2|A_2|+|2A_2|+2 \ge 3|A_2| + 3 = 3|A|-3 \ge \frac94\,|A|,
\end{multline*}
or $A_2$ is a coset, $|2A|\ge 3|A|-4$, and then $A$ is $3$-regular in view of
 $|2A|-|A| \ge 2|A|-4=2|A_2|$.
\end{proof}

\begin{claim}\label{m:3006}
If $A_2$ is not a VSDS, then both $A_1$ and $A_3$ are VSDS.
\end{claim}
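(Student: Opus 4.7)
The plan is by contradiction: assume $A_2$ is not a VSDS but at least one of $A_1,A_3$ also fails to be a VSDS. Claim~\refm{5/4} ensures at least one of $A_1,A_2,A_3$ is a VSDS, so by the $A_1\leftrightarrow A_3$ symmetry of the three-coset configuration (both play identical roles relative to the middle coset $A_2$), it suffices to rule out two sub-cases: (i) all three of $A_1,A_2,A_3$ are non-VSDS; (ii) only $A_1,A_2$ are non-VSDS, with $A_3$ a VSDS. Sub-case (i) already contradicts Claim~\refm{5/4}, since the bounds $|2A_i|\ge\frac32|A_i|$ would give $\sum_i|2A_i|\ge\frac32|A|$, exceeding the bound $\sum_i|2A_i|<\frac54|A|+1$ in view of Claim~\refm{singletons}'s $|A|\ge 5$.

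For sub-case (ii), combining the non-VSDS inequalities $|2A_1|\ge\frac32|A_1|$, $|2A_2|\ge\frac32|A_2|$ with the trivial $|2A_3|\ge|A_3|$ and the bound of Claim~\refm{5/4} yields $|A_3|>\frac12|A|-2$, so $A_3$ dominates in size. By Lemma~\refl{1.5} the subgroup $L_3:=A_3-A_3\le L$ satisfies $A_3\sbs g_3+L_3$ with $|A_3|>\frac23|L_3|$ and $|2A_3|=|L_3|$ (the degenerate case $|A_3|=1$ forces $|A|\le 7$ and is dispatched by direct computation of $|2A|$). A key consequence of the density inequality is that $A_3+\pi=g_3+L_3$ for every subgroup $\pi\le L_3$, since $A_3$ misses at most $(|L_3|-|A_3|)/|\pi|<|L_3|/(3|\pi|)$ cosets of $\pi$, an integer which must equal zero in the relevant regime $|L_3/\pi|\le 3$.

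To close, I would case-split on the number $k'_2$ of $L_3$-cosets that $A_2$ meets inside $\alpha_2+L$. When $k'_2\ge 2$, the disjointness of $L_3$-cosets gives $|A_2+A_3|\ge k'_2|A_3|\ge 2|A_3|$, and combining this with the non-VSDS bounds, Lemma~\refl{kemp1}'s $|A+A_1|\ge|A|+|A_1|$, and the decomposition
 $$ |2A|=|2A_1|+|A_1+A_2|+|(A_1+A_3)\cup 2A_2|+|A_2+A_3|+|2A_3|, $$
forces $|2A|\ge\frac94|A|$, contradicting the hypothesis. When $k'_2=1$, so $A_2$ lies in a single $L_3$-coset, a further sub-case analysis is needed depending on whether the $L_3$-coset of $2A_2$ matches one of those spanned by $A_1+A_3$; the hardest configuration places all three sets $A_1,A_2,A_3$ in single $L_3$-cosets forming a 3-term arithmetic progression modulo $L_3$, and the final contradiction is extracted by exploiting the rectifiable quotient AP structure together with Kneser's theorem applied to $|A_1+A_2|$. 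This last sub-case is the main obstacle, as the inequalities are tight and require careful combinatorial accounting to close.
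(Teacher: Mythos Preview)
Your overall plan matches the paper's: use the $A_1\leftrightarrow A_3$ symmetry to assume $A_3$ is the VSDS, set $H:=A_3-A_3$, and case-split on how many $H$-cosets $A_2$ meets. But the proposal has genuine gaps in both sub-cases.

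For $k'_2\ge2$, the specific combination you name does not close. Using $|A+A_1|\ge|A|+|A_1|$ together with $|A_2+A_3|\ge2|A_3|$ and $|2A_3|\ge|A_3|$ only gives $|2A|\ge|A|+|A_1|+3|A_3|$, and the inequality $|A_1|+3|A_3|\ge\frac54|A|$ is not guaranteed by $|A_3|>\frac12|A|-2$ (try $|A_1|=|A_2|=2$ small). The paper instead derives \emph{two} lower bounds for $|2A|$: one valid always (using assumption~iii) and $|2A_3|=|H|$), namely $|2A|\ge\frac52|A|-\frac12|H|-1$, and one valid when $A_2$ meets two $H$-cosets, namely $|2A|\ge 2|A|+|H|$. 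Adding twice the first to the second eliminates $|H|$ and forces $|A|=5$, which is then dispatched directly. Your tangent about $A_3+\pi=g_3+L_3$ for small-index $\pi\le L_3$ is correct but never used.

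For $k'_2=1$, you concede the argument is unfinished and point vaguely to ``Kneser's theorem applied to $|A_1+A_2|$''. This is not the right tool. The paper first shows, by a separate inequality, that $A_1$ must also lie in a single $H$-coset; then, since $|A_2|\le\frac23|H|<|A_3|$, it applies Olson's lemma (Lemma~\refl{Olson}) to the pair $(A_2,A_3)$. The dichotomy is: either $|A_2+A_3|\ge|A_2|+\frac12|A_3|$, which combined with $|2A_3|=|H|>\frac34|A_3|+\frac14|A_2|$ and assumption~iii) gives $|2A|\ge\frac94|A|$; or $A_3$ is contained in a coset of $\pi(A_2+A_3)$, which forces $A_2+A_3$ to be a full $H$-coset, and then $|2A|\ge|A|+2|H|$ shows $A$ is $3$-regular. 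Without this Olson dichotomy you have no mechanism to produce the regularity conclusion, which is essential since the estimates alone are indeed tight here.
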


\begin{proof}
Recalling Claim~\refm{5/4}, suppose for a contradiction that, say, $A_3$ is
the only VSDS among $A_1,A_2,A_3$; thus, $|2A_1|\ge \frac32\,|A_1|$ and
$|2A_2|\ge \frac32\,|A_2|$; furthermore, there is a finite subgroup $H$ such
that $A_3$ is contained in an $H$-coset and $|A_3|>\frac23|H|$. As a result,
\begin{align}
  |2A| &\ge (|A_1+A_2|+|A_2+A_3|) + |2A_1|+|2A_2|+|2A_3| \notag \\
    &\ge |A|-1 + \frac32|A_1| + \frac32\,|A_2| + |H| \notag \\
    &=   \frac52\,|A|-\frac32|A_3|+|H|-1 \notag \\
    &\ge \frac52\,|A|-\frac12\,|H|-1. \label{e:3005b}
\end{align}

On the other hand, if $A_2$ is not contained in an $H$-coset, then
$|A_2+A_3|\ge2|A_3|$ resulting in
\begin{align}
  |2A| &\ge |A_1+A_2|+|A_2+A_3| + |2A_1| + |2A_2| + |2A_3| \notag \\
       &\ge  \frac12(|A_1|+|A_2|) + 2|A_3|
                    + \frac32\,|A_1| + \frac32\,|A_2| + |H| \notag \\
       &= 2|A|+|H|. \label{e:3005d}
\end{align}

Multiplying~\refe{3005b} by $2$ and taking the sum with~\refe{3005d} we get
  $$ 3|2A| \ge 7|A| - 2, $$
whence
  $$ \lcl\frac94\,|A|\rcl-1 \ge |2A| \ge \frac73\,|A|-\frac23. $$
This is possible only for $|A|=5$. Recalling that $A_3$ is a VSDS while $A_1$
and $A_2$ are not, we conclude that in this case $|A_1|=|A_2|=2$ and
$|A_3|=1$. This further results in $|2A_1|=|2A_2|=3$ and $|A_1+A_2|\ge 3$
(for the last estimate notice that $|A_1+A_2|=2$ would mean that $A_1$ is
contained in the period of $A_2$ and vice versa, meaning that $A_1=A_2$ is
the two-element subgroup, while $A_1$ and $A_2$ are in fact disjoint).
Consequently,
\begin{multline*}
  |2A| \ge |A_1+A_2|+|A_2+A_3|+|2A_1|+|2A_2|+|2A_3| \\
       \ge     3    +    2    +  3   +  3  +   1  = 12 > \frac94\,|A|,
\end{multline*}
a contradiction showing that $A_2$ is contained in an $H$-coset.

We now show that $A_1$ is contained in an $H$-coset, too. Assuming it is not,
we have
  $$ |A_1+A_2| \ge \max\{|A_1|,2|A_2|\} \ge \frac38\,|A_1| + \frac54\,|A_2| $$
and, similarly,
  $$ |A_3+A_1| \ge \max\{|A_1|,2|A_3|\}
                                   \ge \frac38\,|A_1| + \frac54\,|A_3|. $$
Furthermore, $|2A_1|\ge\frac32\,|A_1|$ (as we assume that $A_1$ is not a
VSDS), and trivially, $|2A_3|\ge|A_3|$ and $|A_2+A_3|\ge|A_2|$. Therefore,
\begin{align*}
  \frac94\,|A| &> |A_1+A_2|+|A_3+A_1|+|A_2+A_3|+|2A_1|+|2A_3| \\
     &\ge \Big(\frac34\,|A_1| + \frac54\,|A_2| + \frac54\,|A_3|\Big)
            + |A_2| + \frac32\,|A_1| + |A_3| \\
     &= \frac94\,(|A_1|+|A_2|+|A_3|),
\end{align*}
a contradiction.

We have thus shown that each of $A_1,A_2$, and $A_3$ is contained in an
$H$-coset. Furthermore, $|A_2|\le\frac23|H|<|A_3|$; hence, by
Lemma~\refl{Olson}, either $|A_2+A_3|\ge|A_2|+\frac12|A_3|$, or $A_3$ is
contained in a coset of the period $\pi(A_2+A_3)$. In the latter case we have
$H=A_3-A_3\seq\pi(A_2+A_3)$; since, on the other hand, $A_2+A_3$ is contained
in an $H$-coset, we actually have $|A_2+A_3|=|H|$. Therefore,
\begin{align*}
  |2A| &\ge (|A_1+A_2|+|A_3+A_1|) + |A_2+A_3| + |2A_1| + |2A_3| \\
    &\ge (|A|-1) + 2|H| + |2A_1| \\
    &\ge |A|+2|H|
\end{align*}
so that $A$ is $3$-regular.

Assuming thus that $|A_2+A_3|\ge|A_2|+\frac12|A_3|$, in view of
  $$ |2A_3| = |H| \ge \max\Big\{|A_3|,\frac32\,|A_2| \Big\}
                                     > \frac34\,|A_3| + \frac14\,|A_2| $$
we get the desired
\begin{align*}
  |2A| &\ge (|A_1+A_2| + |A_3+A_1|) + |A_2+A_3| + |2A_1| + |2A_3| \\
    &\ge |A|-1 + \Big(|A_2| + \frac12|A_3|\Big) + \frac32\,|A_1|
                              + \Big(\frac34\,|A_3| + \frac14\,|A_2|\Big) \\
    &= |A|-1 + \frac54\,|A| + \frac14\,|A_1| \\
    &\ge \frac94\,|A|.
\end{align*}
\end{proof}

We now consider two cases, according to whether $A_2$ is or is not a VSDS.

\newcases
\case{$A_2$ is a VSDS}

Suppose that $A_2$ is a VSDS, and let $H:=A_2-A_2$.

\begin{claim}\label{m:3H}
We have $|A_1+H|+|A_3+H|\ge 3|H|$.
\end{claim}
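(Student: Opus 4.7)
My plan is to argue by contradiction. Suppose $|A_1+H|+|A_3+H|<3|H|$; since each summand is a positive multiple of $|H|$, this forces $|A_1+H|=|A_3+H|=|H|$, so each of $A_1$ and $A_3$ is contained in a single $H$-coset. Write $A_i\seq g_i+H$ for $i\in\{1,2,3\}$; since $H\le L$ the three $H$-cosets are pairwise distinct. Lemma~\refl{AuL} applied with $H$ (which is nonzero as $|A_2|\ge 2$, and proper since otherwise $2A=\Z_n$ would contradict $|2A|<n$) yields $|A+H|=3|H|\ge|A|+|H|$, whence $|A|\le 2|H|$.

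I would then split into two sub-cases according to whether the $H$-coset of $A_1+A_3$ coincides with $2g_2+H$. Since $A_1+A_3$ and $2A_2$ share an $L$-coset and $H\le L$, they lie either in the same $H$-coset (\emph{Case~A}: the three cosets $g_i+H$ form a $3$-term arithmetic progression in $\Z_n/H$), or in two distinct $H$-cosets within the common $L$-coset (\emph{Case~B}: the three cosets are not in arithmetic progression). In Case~A my goal is to establish $|2A|-|A|\ge 2|H|$, making $A$ $3$-regular with the progression $\{g_1,g_2,g_3\}$; in Case~B my goal is $|2A|-|A|\ge 3|H|$, making $A$ singular; either conclusion contradicts the minimal counterexample status of $A$.

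Case~B is the easier of the two. From
\[
|2A|=|A_1+A_2|+|A_2+A_3|+|2A_1|+|2A_3|+|A_1+A_3|+|H|,
\]
the trivial bounds $|A_i+A_2|\ge|A_2|$, $|2A_i|\ge|A_i|$, together with Kneser's bound $|A_1+A_3|\ge|A_1|+|A_3|-|\pi|$ where $\pi=\pi(A_1+A_3)\le H$, and a short case split on whether $\pi=H$ or $|\pi|\le|H|/2$, combine with $|A|\le 2|H|$ and $|A_2|>\frac{2}{3}|H|$ to yield $|2A|\ge\frac{9}{4}|A|$ directly, contradicting the hypothesis of Theorem~\reft{aux}.

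Case~A is the main obstacle, since the contribution of $A_1+A_3$ is absorbed into the full $H$-coset $2A_2$, costing $|H|$ of slack in the lower bound. The plan here is to first dispose of the sub-case $\min(|A_1|,|A_3|)>|H|-|A_2|$: in each coset $g_2+g_i+H$ the pigeonhole then forces $|A_i+A_2|=|H|$ for $i\in\{1,3\}$, and a direct calculation gives $|2A|-|A|\ge 3|H|-|A_2|\ge 2|H|$. In the residual sub-case one of $|A_1|,|A_3|$ is at most $|H|-|A_2|<\frac{1}{3}|H|$; here one applies the sharper Kneser bound to the pair $(A_1,A_1\cup A_2)$ (controlling $|2A_1|+|A_1+A_2|$ simultaneously) and exploits the VSDS property of $A_2$ to restrict the possible periods $\pi(A_1+A_2),\pi(A_2+A_3)$, noting that $A_2+\pi$ remains a VSDS inside $g_2+H$ for any $\pi\le H$ because $2(A_2+\pi)=2g_2+H$. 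The computation then delivers either $|2A|-|A|\ge 2|H|$ (the intended $3$-regularity) or the stronger $|2A|\ge\frac{9}{4}|A|$, both yielding the desired contradiction.
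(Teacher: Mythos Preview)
Your contradiction setup and the first sub-case of Case~A are correct. Two points, however.

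\textbf{Case~B is vacuous.} The ambient proof has already fixed the standing assumption that $A_1+A_3$ is \emph{not} disjoint from $2A_2$ (this is stated just before the five-coset discussion, prior to the case split on whether $A_2$ is a VSDS). Once each $A_i$ lies in a single $H$-coset $g_i+H$, we have $A_1+A_3\seq g_1+g_3+H$ and $2A_2\seq 2g_2+H$; since these two sets meet, their $H$-cosets coincide, forcing $g_1+g_3\equiv 2g_2\pmod H$. So the three $H$-cosets are automatically in arithmetic progression, and your Case~B never arises. The paper makes exactly this observation explicitly.

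\textbf{The residual sub-case of Case~A is not established.} Your plan there --- Kneser on $(A_1,A_1\cup A_2)$ together with a period analysis of $A_1+A_2$ and $A_2+A_3$ --- is neither carried out nor visibly sufficient; you would still need to produce the missing $|H|$ of slack, and nothing in the sketch explains where it comes from. The paper avoids this entirely with a much cleaner split. From the trivial bounds $|2A_i|\ge|A_i|$, $|A_2+A_i|\ge|A_2|$, and $|(A_1+A_3)\cup 2A_2|\ge|2A_2|=|H|$ one gets
\[
  \tfrac94\,|A|>|2A|\ge |A|+|A_2|+|H|,
\]
hence $\tfrac54\,|A|>|A_2|+|H|$. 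Now split on whether \emph{both} $|A_1|+|A_2|\le|H|$ and $|A_3|+|A_2|\le|H|$. If both hold, summing gives $2|H|\ge|A|+|A_2|$; combining the two displayed inequalities forces $|A_2|<\tfrac23|H|$, contradicting the VSDS property of $A_2$. Otherwise, say $|A_3|+|A_2|>|H|$; then by pigeonhole $|A_2+A_3|=|H|$, and the same computation with this single improvement already yields $|2A|\ge|A|+2|H|$, i.e.\ $3$-regularity. No period analysis is needed, and one never needs both inequalities $|A_i|+|A_2|>|H|$ to hold simultaneously.
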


\begin{proof}
Suppose for a contradiction that each of $A_1$ and $A_3$ is contained in a
single $H$-coset. Since $|2A_2|=|H|$, using the trivial estimates
$|2A_i|\ge|A_i|$ and $|A_2+A_i|\ge|A_2|$, where $i\in\{1,3\}$, we get
\begin{equation}\label{e:0407a}
  \frac94\,|A| > |2A| = |2A_1|+|2A_3|+|A_1+A_2|+|A_2+A_3|+|2A_2|
       \ge |A| + |A_2| + |H|
\end{equation}
and we conclude that
\begin{equation}\label{e:2506a}
  \frac54\,|A| > |A_2|+|H|.
\end{equation}
If $|A_1|+|A_2|\le |H|$ and $|A_3|+|A_2|\le |H|$, then taking the sum we get
\begin{equation}\label{e:2606a}
  2|H| \ge |A|+|A_2|.
\end{equation}
Combining~\refe{2506a} and~\refe{2606a},
  $$ |A_2| < \frac54\,|A|-|H| \le \frac32\,|H| - \frac54\,|A_2| $$
whence $|A_2|<\frac23\,|H|$, a contradiction showing that either
$|A_1|+|A_2|>|H|$, or $|A_3|+|A_2|>|H|$ holds true. Assuming the latter for
definiteness, by the pigeonhole principle we have $|A_2+A_3|=|H|$, and then
from~\refe{0407a} we obtain $|2A|\ge |A|+2|H|$; hence, $A$ is $3$-regular.
\end{proof}

\begin{claim}\label{m:2206d}
We have $|A_2|<\frac14\,|A|$.
\end{claim}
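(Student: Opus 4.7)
The plan is to argue by contradiction: assume $|A_2|\ge\tfrac14|A|$ and show that either $|2A|\ge\tfrac94|A|$ or $A$ is $3$-regular, each contradicting our standing assumptions on the minimal counterexample.

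The main sumset input is Lemma~\refl{kemp1} applied to the pair $(A,A_2)$; this is legitimate because $|A_2|\ge 2$ (indeed $|A|\ge 5$ by Claim~\refm{singletons} together with $|A_2|\ge\tfrac14|A|$). Combining $|A+A_2|\ge|A|+|A_2|$ with the disjoint decomposition $A+A_2=(A_1+A_2)\cup 2A_2\cup(A_2+A_3)$ into distinct $L$-cosets, and with $|2A_2|=|H|$ (Lemma~\refl{1.5}), yields
\[
  |A_1+A_2|+|A_2+A_3|\ge|A|+|A_2|-|H|.
\]
A complementary lower bound comes from Claim~\refm{3H}: setting $\alpha_i:=|A_i+H|/|H|$ for $i\in\{1,3\}$, one has $\alpha_1+\alpha_3\ge 3$. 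Since each nonempty piece of $A_i$ inside an $H$-coset contributes a translate of $A_2$, of size $|A_2|$, to $A_i+A_2$ in a distinct $H$-coset, it follows that $|A_1+A_2|\ge\alpha_1|A_2|$, $|A_2+A_3|\ge\alpha_3|A_2|$, and hence $|A_1+A_2|+|A_2+A_3|\ge 3|A_2|$.

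Plugging these bounds into the decomposition $|2A|=|A_1+A_2|+|A_2+A_3|+|2A_1|+|2A_3|+|(A_1+A_3)\cup 2A_2|$ together with the trivial $|(A_1+A_3)\cup 2A_2|\ge|2A_2|=|H|$, I will treat two ranges. For $|A_2|\ge\tfrac{5}{12}|A|$, the bound $3|A_2|$ together with $|H|\ge|A_2|$ and $|2A_i|\ge|A_i|$ gives $|2A|\ge|A|+3|A_2|\ge\tfrac94|A|$. For $\tfrac14|A|\le|A_2|\le\tfrac12|A|$ in the subcase where \emph{both} $A_1$ and $A_3$ are not VSDSs, the bound $|A|+|A_2|-|H|$ (whose $-|H|$ cancels against the $+|H|$ term) combined with $|2A_i|\ge\tfrac32|A_i|$ yields $|2A|\ge|A|+|A_2|+\tfrac32(|A|-|A_2|)=\tfrac52|A|-\tfrac12|A_2|\ge\tfrac94|A|$.

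The remaining range is $\tfrac14|A|\le|A_2|<\tfrac{5}{12}|A|$ with at least one of $A_1,A_3$ being a VSDS; say WLOG $A_1$ is a VSDS with $K_1:=A_1-A_1$, so $|2A_1|=|K_1|\in[|A_1|,\tfrac32|A_1|)$. I would split further according to whether $A_3$ is itself a VSDS and to the relative position of $K_1$ and $H$. In the critical subcase $\alpha_1=1$, Claim~\refm{3H} forces $\alpha_3\ge 2$, so $|A_2+A_3|\ge 2|A_2|$; together with the improved bound $|A_3+A_2|\ge 2\min\{|A_2|,\tfrac34|A_3|\}$ from Corollary~\refc{AVSDS} when $A_3$ is not a VSDS, or the quotient structure when $A_3$ is a VSDS with $K_3\not\le H$, one should extract the needed surplus. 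The main obstacle is closing this gap uniformly across all of these subcases; the decisive additional gain is expected to come from recognising that whenever $\alpha_1+\alpha_3\ge 3$ with both $A_i$ nontrivial, $A_1+A_3$ must meet an $H$-coset other than $2g_2+H$, so that $|(A_1+A_3)\cup 2A_2|\ge|H|+(\alpha_1+\alpha_3-2)$ (or more), supplying exactly the missing $\tfrac14|A|$-order surplus to push $|2A|$ past $\tfrac94|A|$ or to certify $3$-regularity.
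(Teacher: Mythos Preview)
Your proposal has a genuine gap: Case~3 (namely $\tfrac14|A|\le|A_2|<\tfrac{5}{12}|A|$ with at least one of $A_1,A_3$ a VSDS) is not proved, and you essentially acknowledge this (``The main obstacle is closing this gap uniformly\ldots''). The suggested rescue---extracting an extra $|(A_1+A_3)\cup 2A_2|\ge|H|+(\alpha_1+\alpha_3-2)$---is far too weak: with $\alpha_1+\alpha_3=3$ this yields only a constant surplus, not the $\tfrac14|A|$-order term you need. The difficulty is structural: by applying Lemma~\refl{kemp1} to the pair $(A,A_2)$ you absorb $|2A_2|=|H|$ into the global bound, but you are then left to bound \emph{both} $|2A_1|$ and $|2A_3|$ separately, and for a VSDS the trivial bound $|2A_i|\ge|A_i|$ is all you have.

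The paper avoids this entirely by a different grouping. By Claim~\refm{3H}, at least one of $A_1,A_3$ (say $A_1$) meets two or more $H$-cosets, giving $|A_1+A_2|\ge 2|A_2|$. The paper then applies Lemma~\refl{kemp1} to $(A,A_3)$ rather than $(A,A_2)$: since $A_3+A$, $2A_1$, and $A_1+A_2$ lie in pairwise disjoint $L$-cosets covering all five cosets of $2A$,
\[
  \tfrac94|A|>|2A|\ge|A_3+A|+|2A_1|+|A_1+A_2|\ge(|A|+|A_3|-1)+|A_1|+2|A_2|=2|A|+|A_2|-1,
\]
and the residual $-1$ is disposed of by an equality-case analysis. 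The point is that this grouping swallows $|2A_3|$ into the Kneser-type bound $|A+A_3|$, so the VSDS status of $A_3$ is irrelevant; and for $A_1$ the trivial bound $|2A_1|\ge|A_1|$ is compensated by the strong bound $|A_1+A_2|\ge 2|A_2|$ coming from $\alpha_1\ge 2$. Your two correct cases could in principle be completed along your lines, but the paper's choice of which $A_i$ to feed into Lemma~\refl{kemp1} is what makes the argument go through uniformly.
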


\begin{proof}
Assuming that, say, $A_1$ meets at least two $H$-cosets
(cf.~Claim~\refm{3H}), we have $|A_1+A_2|\ge2|A_2|$ and then
\begin{multline*}
  \frac94\,|A|
    > |2A|
    \ge |A_3+A| + |2A_1| + |A_1+A_2| \\
    \ge (|A| + |A_3| - 1) +|A_1| + 2|A_2|
    =   2|A| + |A_2| -1.
\end{multline*}
To complete the proof, we show that the term $-1$ in the right-hand side can
be dropped. It is easy to see that otherwise the following conditions are
meat simultaneously: $|A_3|=1$, there is a subgroup $K$ such that $A_1$ is a
$K$-coset, $|A_1+A_2|=2|A_2|$, and $2A_2\seq A_1+A_3$. The first and the last
conditions show that $A_1$ contains an $H$-coset; hence, $K\ge H$. Therefore
$A_1+A_2$ is a $K$-coset, and the condition $|A_1+A_2|=2|A_2|$ shows that
$|K|=2|H|$ and that $A_2$ is an $H$-coset. Therefore $|A|=|K|+|H|+1$,
$|A_2|=|H|$, and
  $$ |2A| = |A_3+A|+|A_2+A_1|+|2A_1| = |A| + 2|K|; $$
therefore $A$ is $3$-regular.
\end{proof}

To complete the treatment of the present case where $A_2$ is a VSDS, we prove
the following claim which is in a clear contradiction with the previous one.
\begin{claim}\label{m:2206e}
We have $|A_2|\ge\frac14\,|A|$.
\end{claim}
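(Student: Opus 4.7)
The plan is to suppose, for contradiction, that $|A_2|<\frac14|A|$ and derive the estimate $|2A|\ge\frac94|A|$, contradicting the standing hypothesis of Theorem~\reft{aux}. Under this assumption, $|A_1|+|A_3|>3|A_2|$, and from $|A_2|>\frac23|H|$ we obtain $|H|<\frac32|A_2|<\frac38|A|$, while $|H|\ge|A_2|$ holds automatically. Hence the larger of $|A_1|,|A_3|$ exceeds both $|A_2|$ and $|H|$; WLOG $|A_1|\ge|A_3|$, so $|A_1|>|H|$ forces $A_1$ to meet at least two $H$-cosets.

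The key tool is Lemma~\refl{2107a}(ii), applied with $A=A_2$ and $K=H$. Since $|A_2|\ge\frac12|H|$ and $2(|H|-|A_2|)<|A_2|<|A_1|$, the dichotomy in part~(ii) rules out $A_1$ being in a single $H$-coset (which we have already excluded) and yields $|A_1+A_2|\ge|A_2|+|H|$. This is the promised improvement over the bound $|A_1+A_2|\ge 2|A_2|$ used to derive Claim~\refm{2206d}. A symmetric application to $A_3$ (whenever it too is large enough and meets multiple $H$-cosets) provides $|A_2+A_3|\ge|A_2|+|H|$; otherwise, part~(i) of the same lemma gives the weaker $|A_2+A_3|\ge|H|$, applicable whenever $|A_3|>|H|-|A_2|$.

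From the decomposition
\[
|2A|=|2A_1|+|A_1+A_2|+|(A_1+A_3)\cup 2A_2|+|A_2+A_3|+|2A_3|,
\]
one assembles the lower bound by combining the trivial $|2A_i|\ge|A_i|$, the improved bound on $|A_1+A_2|$, the appropriate bound on $|A_2+A_3|$, and a careful estimate of $|(A_1+A_3)\cup 2A_2|$. For the last term one writes
\[
|(A_1+A_3)\cup 2A_2|=|A_1+A_3|+|H|-|(A_1+A_3)\cap 2A_2|,
\]
uses $|A_1+A_3|\ge|A_1|+|A_3|$ (Lemma~\refl{kemp1}), and controls the overlap via the $H$-coset distribution of $A_1+A_3$ in $\Z_n/H$: since $A_1$ meets $\ge 2$ $H$-cosets, $A_1+A_3$ spreads over at least two $H$-cosets within its $L$-coset, so only a proper portion of it can lie in the single $H$-coset of $2A_2$.

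The main obstacle is the intricate subcase analysis required when $A_3$ is small or concentrated in a single $H$-coset: in these situations the symmetric bound for $|A_2+A_3|$ degrades to the weaker $|H|$ (or even $|A_2|+|A_3|-1$), and the argument has to absorb this loss by exploiting that $|A_1|$ becomes correspondingly large and that $A_1+A_3$ therefore meets many $H$-cosets, inflating $|(A_1+A_3)\cup 2A_2|$ sufficiently. Combined with the automatic inequality $|H|\ge|A_2|$ and the VSDS constraint $|A_2|>\frac23|H|$, the accumulated estimates in every subcase sum to at least $\frac94|A|$, producing the contradiction and establishing $|A_2|\ge\frac14|A|$.
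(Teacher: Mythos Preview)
Your approach has a genuine gap. You invoke Lemma~\refl{kemp1} to assert $|A_1+A_3|\ge|A_1|+|A_3|$, but Lemma~\refl{kemp1} gives lower bounds only for sums of the form $A+B$ where $A$ is the \emph{full} counterexample set; it says nothing about $A_1+A_3$ for two proper pieces of $A$. Nor does Lemma~\refl{A'+A''} help, since $A_1\cup A_3\ne A$. In fact the inequality $|A_1+A_3|\ge|A_1|+|A_3|$ can genuinely fail here: nothing in the standing hypotheses prevents $A_1$ and $A_3$ from being, say, cosets of a common subgroup $K$ with $|K|\ge|H|$, in which case $|A_1+A_3|=|K|=|A_1|=|A_3|$. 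Your estimate of $|(A_1+A_3)\cup 2A_2|$ therefore collapses, and the remainder of the sketch (``in every subcase sum to at least $\frac94|A|$'') is not detailed enough to see whether the loss can be absorbed.

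The paper's proof proceeds quite differently and avoids estimating $|A_1+A_3|$ directly. It writes down two separate lower bounds for $|2A|$---one via $|A+A_1|+|A+A_3|-|A_1+A_3|$ and one via $|A_2+(A_1\cup A_3)|+|A_1+A_3|+|2A_1|+|2A_3|$---so that when the two are \emph{added} the troublesome term $|A_1+A_3|$ cancels. Combined with Lemma~\refl{kemp1} (now legitimately applied to $A+A_i$) and Lemma~\refl{A'+A''} (applied to $A_2+(A_1\cup A_3)$), this yields
\[
  |A_2|\ \ge\ \tfrac14|A|+\tfrac12(\del_1+\del_3)+\del-\tfrac14,
\]
where $\del=|2A_2\setminus(A_1+A_3)|$ and $\del_i$ records whether $A_i$ is a singleton, a nonzero coset, or neither. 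The only remaining work is the boundary situation $\del=0$ and $\del_1+\del_3\in\{-1,0\}$, which the paper handles by a short case analysis on the coset structure of $A_1$ and $A_3$. If you want to salvage your line of attack, you would need either an independent lower bound on $|A_1+A_3|$ that actually holds in this context, or a way to bypass that term altogether as the paper does.
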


\begin{proof}
Let $\del:=|2A_2\stm(A_1+A_3)|$ and
  $$ \del_i :=
        \begin{cases}
           |2A_i|-|A_i| &\text{if}\ |A_i|>1 \\
           -1 &\text{if}\ |A_i|=1
        \end{cases} , \qquad i\in\{1,3\}. $$
The quantity $\del_i$ shows whether $A_i$ is a singleton ($\del_i=-1$), a
coset of a nonzero subgroup ($\del_i=0$), or neither ($\del_i>0$).

By Lemma~\refl{kemp1}, we have $|A+A_i|+|2A_i|\ge |A|+2|A_i|+\del_i$,
$i\in\{1,3\}$. Consequently, taking the sum of
\begin{align*}
  |2A| &\ge |A_1+A| + |A_3+A| - |A_1+A_3| + \del
  \intertext{and}
  |2A| &\ge |A_2+(A_1\cup A_3)| + |A_3+A_1| + |2A_1| + |2A_3| + \del
\end{align*}
we get
\begin{align*}
  \frac92\,|A| - \frac12
       &\ge 2|2A| \\
       &\ge (|A_1+A|+|2A_1|) + (|A_3+A|+|2A_3|)
                                          + |A_2+(A_1\cup A_3)| + 2\del \\
       &\ge 2|A| + 2|A_1| + 2|A_3| + (|A|-1) + \del_1 + \del_3 + 2\del \\
       &=   5|A| - 2|A_2|  + \del_1 + \del_3 + 2\del - 1
\end{align*}
whence
  $$ |A_2| \ge \frac14\,|A|
                        + \frac12\,(\del_1+\del_3) + \del - \frac14. $$
With Claim~\refm{singletons} in mind, we thus assume for the rest of the
proof that $\del_1+\del_3\in\{-1,0\}$, that $\del=0$ (that is,
 $2A_2\seq A_1+A_3$), and (switching $A_1$ and $A_3$, if needed) that
$\del_1\le \del_3$; that is, either $\del_1=-1$ and $\del_3\in\{0,1\}$, or
$\del_1=\del_3=0$. Moreover, by Claim~\refm{3H}, in each of these cases we
can assume that $A_3$ meets at least two $H$-cosets. (If $A_3$ meets just one
$H$-coset, then $A_1$ meets at least two; hence $\del_1\ge 0$, leading to
$\del_1=\del_3=0$, and we switch $A_1$ and $A_3$ without violating any of the
assumptions.)

Suppose first that $\del_1=-1$ and $\del_3=0$; thus, $|A_1|=1$ and $A_3$ is a
coset of a nonzero subgroup, say $K$. Since $2A_2\seq A_1+A_3$, and since
$2A_2$ is an $H$-coset, while $A_1+A_3$ is a $K$-coset, we have $H\le K$. A
simple counting shows now that $|A|=|A_2|+|K|+1$ while $|2A|=3|K|+|A_2|+1$;
therefore, $|2A|-|A|=2|K|$ and $A$ is $3$-regular.

Next, we consider the case where $\del_1=-1$ and $\del_3=1$; that is, $A_1$
is a singleton, and $A_3$ is not a coset. By Claim~\refm{singletons}, we have
$|H|\ge|A_2|\ge 2$. Furthermore, in view of $2A_2\seq A_1+A_3$, the set $A_3$
contains an $H$-coset; moreover, the containment is proper since $A_3$ meets
at least two $H$-cosets. As a result,
  $$ |A_2+A_3| \ge \max\{|A_2|+1,|A_3|\}
                             \ge \frac12\,(|A_2|+1+|A_3|) = \frac12\,|A| $$
and, consequently,
\begin{multline*}
  \frac94\,|A| > |2A|
  = |A_1+A| + |A_2+A_3| + |2A_3| \\
  \ge |A| + \frac12\,|A| + (|A_3| + 1)
                                      = \frac52\,|A| - |A_2|
\end{multline*}
which gives the desired estimate $|A_2|\ge\frac14\,|A|$.

Finally, we consider the case where $\del_1=\del_3=0$; that is, $A_1$ is a
coset of a nonzero subgroup $H_1$, and  $A_3$ is a coset of a nonzero
subgroup $H_3$. Since $2A$ is aperiodic, and $2A_2\seq A_1+A_3$, we have
$H_1\cap H_3=\{0\}$. Furthermore, $|A|=|H_1|+|A_2|+|H_3|$ and
\begin{align*}
  |2A| &=   |2A_1|+|2A_3| + |A_1+A_3| + |A_1+A_2|+|A_2+A_3| \\
       &\ge |H_1|+|H_3| + |H_1||H_3| + |H_1|+|H_3| \\
       &=   (|H_1|-2)(|H_3|-2) + 4|H_1| + 4|H_3| - 4 \\
       &\ge 4|A| - 4|A_2| - 4.
\end{align*}
If we had $|A_2|\le\frac14\,|A|-\frac14$, this would further lead to
  $$ \frac94\,|A| > |2A| \ge 3|A|-3 $$
contradicting the assumption $|A|\ge 4$.
\end{proof}

\case{$A_2$ is not a VSDS}

Recall that, by Claim~\refm{3006}, in this case both $A_1$ and $A_3$ are
VSDS. Assuming for definiteness that $|A_3|\ge|A_1|$, consider the subgroup
$H:=A_3-A_3$.

\begin{claim}
$A_2$ is contained in a single $H$-coset.
\end{claim}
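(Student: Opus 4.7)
I argue by contradiction: suppose $A_2$ meets $k \ge 2$ cosets of $H := A_3 - A_3$, and aim to derive $|2A| \ge \frac94|A|$, contradicting the standing hypothesis $|2A| < \frac94|A|$.

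The central lower bound is $|A_2 + A_3| \ge 2|A_3|$. Since $A_3$ lies in a single $H$-coset, decomposing $A_2 = B_1 \cup \cdots \cup B_k$ along the $k$ distinct $H$-cosets it meets produces $k$ distinct $H$-cosets for the sets $B_i + A_3$, each containing at least $|A_3|$ elements. Substituting this, together with $|2A_1| \ge |A_1|$, $|2A_3| = |H| \ge |A_3|$ (as $A_3$ is a VSDS), and $|(A_1+A_3) \cup 2A_2| \ge |2A_2| \ge \frac32 |A_2|$ (as $A_2$ is not a VSDS), into
\[
|2A| = |A_1+A_2| + |A_2+A_3| + |2A_1| + |2A_3| + |(A_1+A_3) \cup 2A_2|,
\]
I obtain
\[
|2A| \ge |A_1+A_2| + |A_1| + \tfrac32 |A_2| + 3|A_3|.
\]

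For the bound on $|A_1+A_2|$ I split on $|A_1|$. If $|A_1| \ge 2$ and $\pi(A_1+A_2) = \{0\}$, Kneser's theorem gives $|A_1+A_2| \ge |A_1|+|A_2|-1$, so $|2A| \ge 2|A_1| + \frac52 |A_2| + 3|A_3| - 1$; the desired $|2A| \ge \frac94 |A|$ then reduces to the elementary check $|A_2|+3|A_3| \ge |A_1|+4$, which holds since $|A_3| \ge |A_1|$ and $|A_2| \ge 2$ (automatic from $k \ge 2$). If $|A_1| = 1$, Claim~\refm{singletons} forces $|A_2|, |A_3| \ge 2$, and the identity $|A_1+A_2| = |A_2|$ closes the case by the same arithmetic.

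The main obstacle is the case $F := \pi(A_1+A_2) \ne \{0\}$. Here I would invoke Lemma~\refl{AuL} on $F \le L$ to obtain $\sum_i (|A_i+F|-|A_i|) \ge |F|$; combined with Kneser's identity $|A_1+A_2| = |A_1+F|+|A_2+F|-|F|$, this yields $|A_1+A_2| \ge |A_1|+|A_2|+|A_3|-|A_3+F|$. The sub-case $F \le H$ is then clean: $A_3+F$ sits in the $H$-coset of $A_3$, so the deficit $|A_3+F|-|A_3| \le |H|-|A_3| < \frac13 |H|$ is absorbed, leading to $|2A| > 2|A_1| + \frac52 |A_2| + \frac52 |A_3| \ge \frac94 |A|$. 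The sub-case $F \not\le H$ requires a more delicate structural argument exploiting $[F : F \cap H] \ge 2$ and the VSDS structure of $A_1$ and $A_3$, which I expect to be the most technical part of the proof.
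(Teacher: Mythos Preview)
Your decomposition and the bounds $|A_2+A_3|\ge 2|A_3|$, $|2A_1|\ge|A_1|$, $|2A_3|\ge|A_3|$, and $|(A_1+A_3)\cup 2A_2|\ge\tfrac32|A_2|$ all match the paper. The divergence---and the gap---is in estimating $|A_1+A_2|$. Your Kneser-based split leaves the sub-case $F:=\pi(A_1+A_2)\not\le H$ unfinished, and this case is not vacuous: subgroups of the cyclic group $L$ are ordered only by divisibility of their sizes, not totally (e.g.\ $|H|=4$, $|F|=3$ inside $L\cong\Z_{12}$). In that regime your inequality $|A_1+A_2|\ge|A_1|+|A_2|+|A_3|-|A_3+F|$ can become useless, since $|A_3+F|$ may be as large as $\mathrm{lcm}(|F|,|H|)$, and you supply no substitute argument.

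The paper avoids periods here altogether by exploiting the one hypothesis of Case~2 you have not used: $A_2$ is \emph{not} a VSDS. Corollary~\refc{AVSDS} applied with $(A,B)=(A_2,A_1)$ gives at once
\[
|A_1+A_2|\ge 2\min\{|A_1|,\tfrac34|A_2|\}=\min\{2|A_1|,\tfrac32|A_2|\},
\]
hence $|A_1+A_2|\ge\max\big\{|A_1|,\,|A_2|,\,\min\{2|A_1|,\tfrac32|A_2|\}\big\}$. Substituting into your display and using $|A_3|\ge|A_1|$ reduces the contradiction to showing that
\[
\max\big\{t,\,1,\,\min\{2t,\tfrac32\}\big\} < \tfrac12\, t+\tfrac34
\]
has no solution in $t=|A_1|/|A_2|>0$, which is a routine four-case check. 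This route is both shorter and complete; you should replace your Kneser case-split with it.
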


\begin{proof}
Assuming the opposite, we have $|A_2+A_3|\ge2|A_3|$ and, by
Corollary~\refc{AVSDS},
  $$ |A_1+A_2|\ge\max\big\{|A_1|,|A_2|,\min\{2|A_1|,\frac32\,|A_2|\}\big\}. $$
Consequently,
\begin{align*}
   \frac94\,|A|
    &> |2A| \\
    &\ge |2A_1| + |2A_3| + |A_1+A_2| + |A_2+A_3| + |2A_2| \\
    &\ge |A_1|+|A_3| + \max\{|A_1|,|A_2|,\min\{2|A_1|,\frac32\,|A_2|\}\}
                                                    +2|A_3| + \frac32\,|A_2|
\end{align*}
leading to
  $$ \max\{|A_1|,|A_2|,\min\{2|A_1|,\frac32\,|A_2|\}\}
      < \frac54\,|A_1| + \frac34\,|A_2| - \frac34\,|A_3|
         \le \frac12\,|A_1| + \frac34\,|A_2|. $$
However, the resulting estimate is easily shown to be wrong by analyzing the
four cases where $|A_1|\le\frac12\,|A_2|$,
$\frac12\,|A_2|\le|A_1|\le\frac34\,|A_2|$,
$\frac34\,|A_2|\le|A_1|\le\frac32\,|A_2|$, and $|A_1|\ge\frac32\,|A_2|$.
(Less rigorous, but more convincing is to let $t:=|A_1|/|A_2|$, rewrite the
inequality in question as
$\max\{1,t,\min\{2t,\frac32\}\}<\frac12\,t+\frac34$, and plot both sides, as
functions of $t$).
\end{proof}

Next, we show that the set $A_1$ is contained in a single $H$-coset, too.

\begin{claim}
$A_1$ is contained in a single $H$-coset.
\end{claim}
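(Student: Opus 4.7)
The plan is to argue by contradiction, mirroring the proof of the preceding subclaim: suppose $A_1$ meets $k\ge 2$ cosets of $H$, and derive $|2A|\ge\frac94\,|A|$. Recall the setup: $A_2$ sits in a single $H$-coset by the preceding subclaim, $A_3$ sits in a single $H$-coset with $|A_3|>\frac23\,|H|$ and $2A_3=H$, and since $H=A_3-A_3\le L$ the three pieces $A_1,A_2,A_3$ live in pairwise disjoint $H$-cosets. In particular $|A_2|\le|H|<\frac32\,|A_3|$, and $|H|\ge|A_3|\ge|A_1|$.

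Under the assumption that $A_1$ meets $k\ge 2$ of the $H$-cosets contained in its $L$-coset, the restriction of $A_1$ to each such coset is nonempty, so $|A_1+A_3|\ge k|A_3|\ge 2|A_3|$ and $|A_1+A_2|\ge k|A_2|\ge 2|A_2|$; we also have $|A_1+A_2|\ge|A_1|$ trivially. Since $A_2$ is not a VSDS, Corollary~\refc{AVSDS} yields $|A_2+A_3|\ge 2\min\{|A_3|,\tfrac34\,|A_2|\}$ and $|2A_2|\ge\tfrac32\,|A_2|$. Combined with $|2A_1|\ge|A_1|$, $|2A_3|=|H|\ge|A_3|$, and $|(A_1+A_3)\cup 2A_2|\ge\max\{2|A_3|,\tfrac32\,|A_2|\}$, the decomposition $|2A|=|2A_1|+|2A_3|+|A_1+A_2|+|A_2+A_3|+|(A_1+A_3)\cup 2A_2|$ gives
\[
  |2A|\ge 2|A_3|+\max\{|A_1|,2|A_2|\}+2\min\{|A_3|,\tfrac34\,|A_2|\}+\max\{2|A_3|,\tfrac32\,|A_2|\}.
\]

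I close the proof by a case analysis on whether $|A_2|\le\frac43\,|A_3|$ (to resolve the $\min$ term) and, in that regime, whether $|A_2|\ge|A_1|/2$ (to resolve the $\max$ on $|A_1+A_2|$). The small-$|A_2|$ case $|A_2|<|A_1|/2$ is closed via the trivial bound $|A_1+A_2|\ge|A_1|$ together with $|A_3|\ge|A_1|$, which yields the required $3|A_3|\ge|A_1|+3|A_2|$. The case $|A_2|\ge|A_1|/2$ with $|A_2|\le\frac43\,|A_3|$ is closed using $|A_1+A_2|\ge 2|A_2|$ and $|A_2+A_3|\ge\frac32\,|A_2|$. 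The opposite regime $|A_2|>\frac43\,|A_3|$ is closed via the sharper bound $|(A_1+A_3)\cup 2A_2|\ge\tfrac32\,|A_2|$ coming from $A_2$ not being a VSDS, together with the crucial constraint $|A_2|<\frac32\,|A_3|$ that follows from $A_2$ lying in an $H$-coset disjoint from that of $A_3$. The main obstacle is the arithmetic bookkeeping: keeping track of which lower bound is tight in each regime so that the coefficients sum to at least $\frac94\,|A|$; all edge cases are eliminated by the two size constraints $|A_3|\ge|A_1|$ and $|A_2|<\frac32\,|A_3|$.
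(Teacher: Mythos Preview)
Your argument is essentially sound, but there is a slip in the displayed inequality: from $|2A_1|\ge|A_1|$ and $|2A_3|\ge|A_3|$ you obtain only $|2A_1|+|2A_3|\ge|A_1|+|A_3|$, not $2|A_3|$ (and $|A_1|$ may well be strictly smaller than $|A_3|$). Your subsequent case analysis is consistent with the corrected leading term $|A_1|+|A_3|$ --- for instance, the inequality $3|A_3|\ge|A_1|+3|A_2|$ you cite in the small-$|A_2|$ subcase is exactly what falls out of that version --- so this looks like a transcription error rather than a real gap. With the fix, all three cases close as you describe (and the constraint $|A_2|<\frac32|A_3|$ you flag in the last regime is in fact not needed there).

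Your route differs from the paper's. The paper avoids the case split by combining two observations you did not exploit: first, since $A_1+A_3$ occupies at least two $H$-cosets while $2A_2$ lies in only one, one gets the additive bound $|(A_1+A_3)\cup 2A_2|\ge|2A_2|+|A_3|\ge\frac32|A_2|+|A_3|$ rather than just the maximum of the two pieces; second, the standing assumption (iii) (Lemma~\refl{A'+A''}) gives $|A_1+A_2|+|A_2+A_3|\ge|A|-1$ in one stroke. Together with $|2A_1|\ge|A_1|$ and $|2A_3|\ge|A_3|$ these yield $|2A|\ge\frac52|A|-1$ directly, which contradicts Lemma~\refl{numbers}. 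Your approach trades the appeal to Lemma~\refl{A'+A''} for Corollary~\refc{AVSDS} and explicit bookkeeping on the individual summands; this is more self-contained but noticeably longer.
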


\begin{proof}
Assuming the opposite, the sum $A_1+A_3$ meets at least two $H$-cosets, and
has at least $|A_3|$ elements in every $H$-coset that it meets. Consequently,
$|(2A_2)\cup(A_1+A_3)|\ge |2A_2|+|A_3|\ge \frac32\,|A_2|+|A_3|$. Therefore
\begin{align*}
  \frac94\,|A| &> |2A| \\
    &\ge (|A_1+A_2| + |A_2+A_3|) + |2A_1| + |2A_3|
                                                  + |(2A_2)\cup(A_1+A_3)| \\
    &\ge (|A|-1) + |A_1| + |A_3| + \left(\frac32\,|A_2|+|A_3|\right) \\
    &\ge \frac52\,|A|-1
\end{align*}
contradicting Lemma~\refl{numbers}.
\end{proof}

We have thus shown that each of $A_1,A_2$, and $A_3$ is contained in an
$H$-coset. We also recall that, by our present assumptions, $A_1$ and $A_3$
are VSDS, while $A_2$ is not, and that $A_3-A_3=H$ and $|A_1|\le|A_3|$; as a
result, $|A_2|\le\frac23\,|H|\le|A_3|$.

\subcase{$\max\{|A_1|,|A_2|\}\ge\frac12|A_3|$} If $|A_2|\ge\frac12\,|A_3|$,
then in view of $|A_3|>\frac23\,|H|$ we have $|A_2|+|A_3|>|H|$. Therefore
$A_2+A_3$ is an $H$-coset and
\begin{align*}
  |2A| &\ge |A_1+A_2| + |A_2+A_3| + |A_3+A_1| + |2A_1| + |2A_3| \\
    &\ge |A_2| + |H| + |A_3| + |A_1| + |H| \\
    &= |A| + 2|H|
\end{align*}
so that $A$ is $3$-regular.

Similarly, if $|A_1|\ge\frac12\,|A_3|$, then $|A_1|+|A_3|>|H|$. Therefore
$A_1+A_3$ is an $H$-coset and then
\begin{align*}
  |2A| &\ge (|A_1+A_2| + |A_2+A_3|) + |A_3+A_1| + |2A_1| + |2A_3| \\
    &\ge (|A|-1) + |H| + 1 + |H| \\
    &= |A| + 2|H|
\end{align*}
shows that $A$ is $3$-regular.

\subcase{$\max\{|A_1|,|A_2|\}<\frac12|A_3|$} We have
\begin{align*}
  \frac94\,|A|-\frac14 &\ge |2A| \\
     &\ge (|A_1+A_2| + |A_2+A_3|) + |A_1+A_3| + |2A_1| + |2A_3| \\
     &\ge (|A|-1) + |A_3| + |A_1| + |A_3| \\
     &\ge |A_1| + \frac54\,|A_3|
         + \frac34\,\left(\frac13\,|A_1|+\frac53\,|A_2| + 1\right) + |A| - 1
        = \frac94\,|A| - \frac14.
\end{align*}
This shows that $|2A_1|=|A_1|$ and $|2A_3|=|A_3|$; that is, both $A_1$ and
$A_3$ are cosets. Since $A_3-A_3=H$ and $A_1$ is contained in an $H$-coset,
we conclude that $A_3$ is an $H$-coset and that there is a subgroup $K\le H$
such that $A_1$ is a $K$-coset. In this case $|A|=|K|+|A_2|+|H|$ and from
  $$ |2A_1|=|K|,\ |A+A_3|=3|H|,\ |A_2+A_1|\ge|A_2|, $$
we get $|2A|\ge 3|H|+|K|+|A_2|$; hence, $|2A|-|A|\ge 2|H|$.
\end{proof}

\mysection{Character sums and partial rectification}\label{s:charsum}

This section combines a character-sum argument and a combinatorial reasoning.
Its central component is a lemma which, loosely speaking, shows that over
90\% of a counterexample set must be well-structured. The lemma is a version
of \cite[Proposition~4.2]{b:df} incorporating a critically important trick
from \refb{ls}. Historically, quoting from~\refb{df}, ``the underlying idea
$\<$of the lemma$\>$ comes from~\refb{f1}\, (\ldots)\! where the case of
prime modulus $n$ was dealt with''.

Recall that an arithmetic progression in an abelian group is \emph{primitive}
if its difference generates the group.
\begin{lemma}\label{l:DF}
Suppose that Theorem~\reft{aux} is wrong. If $A\seq\Z_n$ is a counterexample
with $n$ smallest possible, then there exist a subgroup $H<\Z_n$ of index
$m:=n/|H|\ge 37$, and a primitive arithmetic progression $P\seq\Z_n$ with
$|P|\le(m+1)/2$, such that $|(P+H)\cap A|>0.9|A|$.
\end{lemma}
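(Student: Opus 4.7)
The approach is Fourier-analytic, adapting \cite[Proposition~4.2]{b:df} and incorporating the sharpening from \refb{ls}. For $t\in\Z_n$ let $\wh A(t):=\sum_{a\in A}e^{2\pi iat/n}$, so that Parseval reads $\sum_t|\wh A(t)|^2=n|A|$ and the additive energy of $A$ is $E(A)=\frac1n\sum_t|\wh A(t)|^4$.

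The first step is to use the small-doubling hypothesis to extract a large nontrivial Fourier coefficient. Cauchy--Schwarz on the representation function $r(s):=|\{(a,b)\in A^2\colon a+b=s\}|$ yields $E(A)\ge|A|^4/|2A|>\tfrac49|A|^3$, and since $|A|\le C_0^{-1}n$ by Lemma~\refl{numbers} the principal term $|A|^4/n$ coming from $t=0$ is negligible. Dividing the resulting lower bound on $\sum_{t\ne 0}|\wh A(t)|^4$ by $\sum_{t\ne 0}|\wh A(t)|^2\le n|A|$ produces a nonzero $t$ with $|\wh A(t)|\ge\tfrac23|A|$. Feeding this directly into Lemma~\refl{Fpoints} would place only about $\tfrac56|A|$ of $A$ inside an open semicircle, short of the required $0.9|A|$; this is where I would invoke the trick from \refb{ls}, which exploits correlations between dilates of the extracted character to upgrade the bias to $|\wh A(t^*)|\ge\eta|A|$ for some $\eta>\tfrac45$ and an appropriate $t^*\ne 0$.

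The next step converts this bias into the asserted coset-progression structure. Let $H:=\{g\in\Z_n\colon gt^*\equiv 0\pmod n\}$, a proper subgroup whose index $m:=[\Z_n:H]$ equals the order of $t^*$. The values $e^{2\pi iat^*/n}$ for $a\in A$ are $m$-th roots of unity depending only on the $H$-coset of $a$; by Lemma~\refl{Fpoints} more than $0.9|A|$ of them (counted with the multiplicity coming from $A$) lie in some open semicircle. Such a semicircle contains at most $(m+1)/2$ of the $m$-th roots of unity, which correspond to consecutive $H$-cosets and hence to a primitive arithmetic progression $P\seq\Z_n$ with $|P|\le(m+1)/2$; primitivity is forced by the fact that $t^*$ generates the cyclic quotient $\Z_n/H$. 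Thus $|(P+H)\cap A|>0.9|A|$, as required.

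Finally, the lower bound $m\ge 37$ is secured by ruling out small values of the index. The case $m=2$ is killed outright by Lemma~\refl{twocoset}, and $m=3$ is excluded by Lemma~\refl{threecosets} (for $m\le 3$ any subset of $\Z_m$ is rectifiable). For each $4\le m\le 36$ the putative containment $|(P+H)\cap A|>0.9|A|$ with $|P|\le(m+1)/2$ restricts $A$ very strongly: combined with $|2A|<\tfrac94|A|$, Lemma~\refl{numbers}, and Kneser's theorem applied to $2A$, the resulting bounds on $|2A|-|A|$ force $A$ into the regular, singular, or densely-contained-in-a-coset cases of Theorem~\reft{aux}, contradicting the counterexample assumption. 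I expect the main obstacle to be the LS-style upgrade of the bias from the naive $\eta=\tfrac23$ to $\eta>\tfrac45$: the coefficient $\tfrac94$ in the doubling hypothesis, the threshold $0.9$, and the cutoff $m\ge 37$ must be matched by tight bookkeeping for a single argument to yield all three simultaneously.
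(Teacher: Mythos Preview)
Your overall skeleton (large Fourier bias $\Rightarrow$ Freiman's half-circle lemma $\Rightarrow$ coset progression in the quotient) matches the paper, but the two places you flag as ``to be filled in'' are exactly where the content lies, and your sketches of both are off target.

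\textbf{The bias upgrade.} The paper does \emph{not} first extract a character and then boost it via dilates. Instead it works with the mixed sum
\[
  \sigma=\frac1n\sum_{\chi}|\wh A(\chi)|^2|\wh S(\chi)|^2
        =\sum_{x}|A^{(x)}|\,|S^{(x)}|,\qquad S:=2A,
\]
and bounds the right side below using the Katz--Koester inclusion $A^{(x)}+A\subseteq S^{(x)}$ together with Lemma~\refl{kemp1}, which gives $|A^{(x)}+A|\ge|A|+|A^{(x)}|$ whenever $|A^{(x)}|\ge 2$. The defect from uniquely represented differences is absorbed via Lemma~\refl{Mantel}. This yields $\sigma\ge|A|^3(1+1/\tau)+\text{(lower order)}$, whence (after the splitting described next) $\eta^2>52/81-\eps$ and $\eta>0.8$. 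Your energy bound $E(A)>\tfrac49|A|^3$ alone cannot reach $\eta>0.8$; the extra factor of $|\wh S|^2$ is essential.

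\textbf{The bound $m\ge 37$.} The paper does not obtain this by post-hoc casework. It splits $\sigma=\sigma_0+\sigma_1$ according to whether $|\ker\chi|\ge n/36$ or not. The small-order piece $\sigma_0$ is bounded trivially by $\Phi(n)\tau|A|^3$ with $\Phi(n)=n^{-1}\sum_{d\mid n,\ d\le 36}\phi(d)<\eps$ once $n\ge 4C_0$ (Lemma~\refl{numbers}); this is where the specific value of $C_0$ enters. Consequently the character achieving $|\wh A(\chi)|>0.8|A|$ is forced to have $|\ker\chi|<n/36$, i.e.\ $m\ge 37$, automatically. Your plan to rule out $4\le m\le 36$ case by case would have to cope with a character that concentrates only $>0.9|A|$ (not all of $A$) in a short run of cosets, and Lemmas~\refl{twocoset} and~\refl{threecosets} do not apply there; this route is not viable as stated.
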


\begin{proof}
We assume that $|2A|<\min\{\frac94\,|A|,n\}$ (since $A$ satisfies the
assumptions of Theorem~\reft{aux}), that $|2A|-|A|\le C_0^{-1}n$ (since $A$
fails to satisfy the conclusion of the theorem), and that $|A+B|\ge|A|+|B|$
holds for any subset $B\seq\Z_n$ with $2\le|B|\le|A|$ (in view of
Lemma~\refl{kemp1}). Also, $|2A|\ge2|A|\ge 8$ and $n\ge4C_0$, see
Lemmas~\refl{numbers} and~\refl{kemp1}.

For a finite subset $B$ and an element $x$ of an abelian group, we let
$B^{(x)}:=B\cap(B+x)$; therefore, $|B^{(x)}|$ is the number of
representations of $x$ as a difference of two elements of $B$, and in
particular $|B^{(x)}|=0$ if $x\notin B-B$. We have
  $$ \sum_{x\in B-B} |B^{(x)}| = |B|^2 $$
and
\begin{equation}\label{e:kko}
  B^{(x)}+B \seq (2B)^{(x)};
\end{equation}
the latter relation, often called the \emph{Katz-Koester observation}, can be
proved as follows:
  $$ B^{(x)}+B = (B\cap(B+x))+B \seq (2B) \cap ((2B)+x) = (2B)^{(x)}. $$
We also have
  $$ \sum_{x\in B-B} |B^{(x)}|^2 = \E(B), $$
where $\E(B)$ (standardly called the \emph{energy} of $B$) is the number of
quadruples $(b_1\longc b_4)\in B^4$ with $b_1+b_2=b_3+b_4$. We recall the
basic estimate $\E(B)\ge|B|^4/|2B|$ following easily from the Cauchy-Schwartz
inequality.

Let $\tau:=|2A|/|A|$. Denoting by $\wh{A}$ the counting-measure Fourier
transform of the indicator function of the set $A$, and similarly for the
indicator function of the sumset $S:=2A$, we have
\begin{equation}\label{e:1404a}
  \frac1n\, \sum_{\chi\in\widehat{\Z_n}} |\hA(\chi)|^2 |\hS(\chi)|^2
     =  \sum_{x\in A-A} |A^{(x)}||S^{(x)}|
                                \ge  \sum_{x\in A-A} |A^{(x)}||A+A^{(x)}|;
\end{equation}
here the equality follows by a direct computation, both sums involved
counting the number of solutions to $a_1-a_2=s_1-s_2$ with $a_1,a_2\in A$ and
$s_1,s_2\in S$, and the inequality follows from~\refe{kko}. Let $D$ be the
set of all those $x\in\Z_n$ with $|A^{(x)}|=1$, and let $N:=|D|$. By
Lemma~\refl{kemp1} we have $|A+A^{(x)}|\ge|A|+|A^{(x)}|$ unless $x\in D$.
Consequently, denoting the sum in the left-hand side of~\refe{1404a} by
$\sig$,
\begin{align*}
  \sig &\ge \sum_{x\in A-A} |A^{(x)}||A+A^{(x)}| \\
       &\ge \sum_{\substack{x\in A-A \\ x\ne 0}} |A^{(x)}|(|A|+|A^{(x)}|)
                                      - \sum_{x\in D} |A^{(x)}| + |A||S| \\
       &\ge \sum_{x\in A-A} |A^{(x)}|(|A|+|A^{(x)}| - N + |A||S| - 2|A|^2 \\
       &=   |A|^3 + \E(A) - N - (2-\tau)|A|^2
\end{align*}
where the terms $|A||S|$ and $-2|A|^2$ arise from considering the summand
corresponding to $x=0$. We conclude that
\begin{equation}\label{e:1805a}
  \sig \ge |A|^3 + \frac{|A|^3}{\tau} + (\tau-2)|A|^2 - N.
\end{equation}
We split the sum in the left-hand side into two parts,
  $$ \sig_0 = \frac1n\, \sum_{\substack{\chi\in\hZn \\ |\ker\chi|\ge n/36}}
                                                |\hA(\chi)|^2 |\hS(\chi)|^2 $$
and
  $$ \sig_1 = \frac1n\, \sum_{\substack{\chi\in\hZn \\ |\ker\chi|<n/36}}
                      |\hA(\chi)|^2 |\hS(\chi)|^2 $$
(the constant $36$ was found by a hindsight optimization). Let $\phi$ denote
Euler's totient function. For any divisor $d\mid n$, there are exactly
$\phi(d)$ characters $\chi\in\hZn$ with $|\ker\chi|=n/d$. Therefore,
estimating trivially the first sum,
  $$ \sig_0 < \Phi(n)\,|A|^2|S|^2 = \Phi(n) \,\tau|A|^3, $$
where
  $$ \Phi(n) = \frac1n\,\sum_{\substack{1\le d\le 36 \\ d\mid n}} \phi(d). $$
Let $\eps:=\frac4{2025}$. If $n>200,\!475$, then
  $$ \Phi(n) < \frac1{200,\!475} \sum_{1\le d\le 36} \phi(d) < \eps, $$
and a computer verification shows that the resulting estimate $\Phi(n)<\eps$
also holds for all values $92,\!400<n\le 200,\!475$. Recalling that $n\ge
4C_0> 92,\!400$ by Lemma~\refl{numbers}, we therefore have
\begin{equation}\label{e:1805b}
  \sig_0 < \eps \tau |A|^3.
\end{equation}

For the second sum, letting
  $$ \eta := \max_{\substack{\chi\in\hZn \\ |\ker\chi|<n/36}}
                                                  |\hA(\chi)| / |A| $$
and using Parseval's identity, we get
  $$ \sig_1 \le \frac1n\,\eta^2|A|^2
       \sum_{\substack{\chi\in\hZn \\ |\ker\chi|<n/36}} |\hS(\chi)|^2
                                 \le \eta^2|A|^2 |S| = \eta^2\tau|A|^3. $$
Combining this estimate with~\refe{1805a} and~\refe{1805b}, we obtain
  $$ (\eta^2 + \eps) \tau|A|^3
                    > |A|^3 + \frac{|A|^3}{\tau} + (\tau-2)|A|^2 - N; $$
that is,
  $$ \eta^2 + \eps \ge \frac1\tau + \frac1{\tau^2}
                            + \frac{\tau-2}{\tau|A|} - \frac N{\tau|A|^3}. $$
Since $|A|\ge 4$, and using the trivial bound $N\le|A|^2$, the right-hand
side is easily verified to be a decreasing function of $\tau$; therefore, by
Lemma~\refl{Mantel},
  $$ \eta^2 + \eps > \frac49 + \frac{16}{81}
       + \frac4{9|A|^3}\left(\frac14\,|A|^2-N \right)
           \ge \frac49 + \frac{16}{81} = \frac{52}{81} $$
whence $\eta>0.8$.

Thus, there exists a character $\chi\in\hZn$ such that $|\ker\chi|<n/36$ and
  $$ |\hA(\chi)| > 0.8|A|. $$
Letting $H:=\ker\chi$ and $m:=n/|H|$ (so that $m\ge 37$, $H=m\Z_n$, and
$\Z_n/H\cong\Z_m$), there is a zero-kernel character
$\zet\in\widehat{\Z_n/H}$ such that $\chi=\zet\circ\phi_H$, where
$\phi_H\colon\Z_n\to\Z_n/H$ is the canonical homomorphism. In terms of this
character $\zet$, the last estimate can be rewritten as
  $$ \Big| \sum_{a\in A} \zet(\phi_H(a)) \Big| > 0.8|A|. $$
The summands in the left-hand side are complex roots of unity, and by
Lemma~\refl{Fpoints}, there exists a subset $A'\seq A$ of size
$|A'|>\frac12\,(1+0.8)|A|=0.9|A|$, and an open arc $\cC$ of the unit circle,
of angle measure $\pi$, such that $\zet(\phi_H(a))\in\cC$ for all $a\in A'$.
The arc $\cC$ contains at most $\lfloor(m+1)/2\rfloor$ roots of unity of
degree $m$, which are in a geometric progression. As a result, the set
$\phi_H(A')$ is contained in a primitive arithmetic progression $Q\seq\Z_n/H$
of size $|Q|\le (m+1)/2$; hence,
\begin{equation}\label{e:1401a}
  A' \seq \phi_H^{-1}(Q).
\end{equation}

Fix $c,d\in\Z_n$ such that $c+H$ and $d+H$ are the initial term and the
difference of the progression $Q$, respectively, and $d$ generates $\Z_n$;
the latter is possible since $d+H$ generates $\Z_n/H$. Letting
$P:=\{c,c+d\longc c+(|Q|-1)d\}\seq\Z_n$, we have $\phi_H(P)=Q$, whence
$\phi_H^{-1}(Q)=P+H$. This completes the proof in view of~\refe{1401a}.
\end{proof}

\mysection{Proof of Theorem~\reft{aux}}\label{s:final}

Suppose that the theorem is wrong. Let $n$ be the smallest positive integer
for which the assertion fails, and let $A\seq\Z_n$ be a counterexample set
satisfying the assumptions, but not the conclusion of the theorem. In
particular, $A$ is not contained in a proper coset, $n\ge4C_0$,
 $4\le |A|\le C_0^{-1}n$, and $8\le|2A|\le 2C_0^{-1}n$ by
Lemma~\refl{numbers}; also, $2A$ is aperiodic by Lemma~\refl{2Aper}.

Applying  Lemma~\refl{DF}, we find a subgroup $L<\Z_n$ of index
 $m:=n/|L|\ge 37$, and a primitive arithmetic progression $Q_0\seq\Z_n$ with
$|Q_0|\le(m+1)/2$ such that the set $A':=(Q_0+L)\cap A$ has size
$|A'|>0.9|A|$. The condition $|Q_0|\le(m+1)/2$ along with the primitivity of
$Q_0$ ensure that $\phi_L(Q_0)$ is rectifiable. Thus, $\phi_L(A')$ is
contained in a rectifiable subset of $\Z_n/L$; hence, is itself rectifiable.
Let $A'':=A\stm A'$. We observe that the $L$-cosets determined by $A'$ are
distinct from those determined by $A''$: $(A'+L)\cap(A''+L)=\est$. Also,
\begin{equation}\label{e:0308a}
  |2A'| \le |2A| < \frac94\,|A| < \frac52\,|A'|.
\end{equation}

It suffices to prove that $\phi_L(A)$ is rectifiable, as in this case
$|\phi_L(A)|\ge 4$ by Lemma~\refl{threecosets}, and applying
Proposition~\refp{combo} we conclude that $A$ is \emph{not} a counterexample.

\begin{claim}\label{m:A''ne}
The set $A''$ is nonempty.
\end{claim}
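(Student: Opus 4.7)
The plan is to argue by contradiction: assume $A''=\est$, so that $A=A'\seq Q_0+L$, and then derive that $A$ must have the structure of Theorem~\reft{aux}~ii), contradicting the choice of $A$ as a counterexample.

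First, I would observe that under the assumption $A''=\est$ the image $\phi_L(A)$ is contained in $\phi_L(Q_0)$. Since the difference of $Q_0$ generates $\Z_n$ (primitivity), its image in $\Z_n/L\cong\Z_m$ is an element of order $m$, and thus $\phi_L(Q_0)$ is a primitive arithmetic progression in $\Z_n/L$ of at most $(m+1)/2$ terms; by the remark in the ``Local isomorphism and rectification'' subsection of Section~2, such a progression is rectifiable. A subset of a rectifiable set is rectifiable (the restriction of the local isomorphism works), so $\phi_L(A)$ is rectifiable.

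Next, I would invoke Lemma~\refl{threecosets}, which applies precisely to this situation (since $A$ is a counterexample of minimal $n$ and $\phi_L(A)$ is rectifiable): it yields $s:=|\phi_L(A)|\ge 4$. Consequently,
  $$ 3\Big(1-\frac1s\Big) \ge 3\Big(1-\frac14\Big) = \frac94 > \frac{|2A|}{|A|}, $$
so the hypothesis of Proposition~\refp{combo} is satisfied. Applying that proposition produces an arithmetic progression $P\seq\Z_n$ (with $|P|\ge 2$ by our convention on arithmetic progressions) and a proper subgroup $H<\Z_n$ with $A\seq P+H$ and $(|P|-1)|H|\le|2A|-|A|$. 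But this is exactly the structure described in Theorem~\reft{aux}~ii), contradicting the assumption that $A$ is a counterexample.

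There is no real obstacle here: the only substantive inputs are (a) the rectifiability of the image of a short primitive progression, which is immediate from the definitions, and (b) the already-proved Lemma~\refl{threecosets} and Proposition~\refp{combo}. The one point to handle with a little care is the verification that $\phi_L(Q_0)$ has order-$m$ difference in $\Z_n/L$, so that the ``at most $(m+1)/2$ terms'' bound actually triggers rectifiability in the quotient.
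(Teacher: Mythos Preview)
Your proof is correct and follows essentially the same route as the paper. The paper's version is terser only because the surrounding text has already recorded (i) that $\phi_L(A')$ is rectifiable and (ii) that it suffices to show $\phi_L(A)$ is rectifiable (since Lemma~\refl{threecosets} and Proposition~\refp{combo} then yield the contradiction); its proof of the claim therefore reduces to the single line ``if $A''=\est$, then $\phi_L(A)=\phi_L(A')$ is rectifiable,'' which is exactly what you spell out in full.
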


\begin{proof}
If $A''=\est$, then $A=A'$; as a result, $\phi_L(A)=\phi_L(A')$ is
rectifiable.
\end{proof}

In view of $|A''|<0.1|A|$, as an immediate corollary of Claim~\refm{A''ne} we
have
\begin{equation}\label{e:ac1}
  |A''|<\frac19\,|A'|\quad \text{and}\quad |A|\ge 11.
\end{equation}

\begin{claim}\label{m:A'per}
The set $A'$ is not contained in a proper coset.
\end{claim}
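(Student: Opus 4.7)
The plan is to argue by contradiction. Suppose $A'\seq g+M$ for some proper, nonzero subgroup $M<\Z_n$. First I would analyze the $M$-coset decomposition $A=A_0\cup A_1\cup\dots\cup A_{k-1}$, where $A_0:=A\cap(g+M)\supseteq A'$ and the $A_i$ ($i\ge 1$) lie in distinct remaining $M$-cosets. Since $|A_0|\ge|A'|>0.9\,|A|$, and the sets $2A_0,A_0+A_1,\dots,A_0+A_{k-1}$ lie in pairwise distinct $M$-cosets each of size at least $|A_0|$, one obtains $|2A|\ge k|A_0|>0.9k|A|$. Combined with $|2A|<\tfrac94|A|$ this forces $k\le 2$; since $A$ is not contained in a proper coset, $k\ge 2$, hence $k=2$. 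Lemma~\refl{twocoset} then gives $[\Z_n:M]=2$, so $|M|=n/2$.

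The case $k=2$, $|M|=n/2$ requires more care. Shifting so that $g=0$, write $A=A_0\cup A_1$ with $A_0\seq M$ and $A_1\seq y+M$ for some $y\notin M$. Then $2A\cap M=2A_0\cup 2A_1$ while $2A\cap(y+M)=A_0+A_1$. By Lemma~\refl{A'+A''}, $|A_0+A_1|\ge|A|-1$, and Kneser's theorem applied to $A_0+A_0$ inside $M$ gives $|2A_0|\ge 2|A_0+K|-|K|$ where $K:=\pi(2A_0)\le M$. If $K=\{0\}$, this yields $|2A|>2.8\,|A|-2$, contradicting $|2A|<\tfrac94|A|$ for $|A|\ge 11$ (using \refe{ac1}). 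Hence $K\ne\{0\}$, and the same estimate refines to $|K|>0.55\,|A|-1$, giving $|K|\ge 6$. If $K=M$, then $|2A_0|$ is a multiple of $|M|$ contained in the single coset $2g+M$, forcing $|2A_0|=|M|=n/2$; combined with $|2A|\ge n/2+|A|-1$ and $|A|\le C_0^{-1}n$, this contradicts $|2A|<\tfrac94|A|$.

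The remaining subcase $K<M$ properly is the main obstacle. Here $[\Z_n:K]\ge 4$, so Lemma~\refl{twocoset} forces $A$ to meet at least three $K$-cosets. My plan is to invoke Kemperman's theorem (Theorem~\reft{kemp}) on $\phi_K(A_0)\seq M/K$, which satisfies $|2\phi_K(A_0)|=2|\phi_K(A_0)|-1$, to obtain precise structure (an arithmetic progression, or one of the Kemperman elementary configurations) in the cyclic quotient $M/K$. Combined with $\phi_K(A_1)$ lying in the opposite $M/K$-coset, this pins down $\phi_K(A)\seq\Z_n/K$; then, by applying the minimality of $n$ to $\phi_K(A)$ (which is not in a proper coset of $\Z_n/K$, since $A$ is not in a proper coset of $\Z_n$) and inspecting the three outcomes of Theorem~\reft{aux} in the quotient, one deduces that $A$ itself must be densely contained in a coset, regular, or singular, contradicting its status as a counterexample. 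Handling this quotient-and-lift case analysis is the most delicate part of the proof.
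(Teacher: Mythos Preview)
Your reduction to $k=2$ and $[\Z_n:M]=2$ is correct, but at that point you overlook the decisive one-line observation and instead launch into an elaborate, unfinished case analysis. Once $k=2$, the sets $2A_0$ and $A_0+A_1$ lie in distinct $M$-cosets, so
\[
  |2A'| \le |2A_0| \le |2A| - |A_0+A_1| < \tfrac52\,|A'| - |A'| = \tfrac32\,|A'|;
\]
hence $A'$ is a VSDS, and by Lemma~\refl{1.5} the set $F:=A'-A'$ is a subgroup with $|F|<\tfrac32|A'|<\tfrac32|A|\le\tfrac32\,C_0^{-1}n$ (Lemma~\refl{numbers}). Since $A'$ lies in an $F$-coset, your own first paragraph applied with $F$ in place of $M$ forces $[\Z_n:F]=2$, an immediate contradiction. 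This is exactly the paper's argument: it takes $F$ to be the \emph{smallest} subgroup with $A'$ contained in an $F$-coset, derives $|2A'|<\tfrac32|A'|$ from the same coset count, identifies $F=A'-A'$ by minimality, and finishes via Lemma~\refl{twocoset} since $|F|$ is tiny.

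By contrast, your proposed handling of the subcase $\{0\}\ne K<M$ is only a sketch and not obviously completable as written. Invoking the minimality of $n$ on $\phi_K(A)$ requires verifying either $|2\phi_K(A)|<\tfrac94\,|\phi_K(A)|$ or the hypothesis of Lemma~\refl{MofA}, neither of which you establish; and the Kemperman structure of $\phi_K(A_0)$ inside $M/K$ does not by itself constrain $\phi_K(A_1)$, so ``pinning down $\phi_K(A)$'' needs further argument. All of this machinery becomes unnecessary once the VSDS observation above is made.
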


\begin{proof}
Suppose that $A'$ is contained in a proper coset, and let $g+F$, with
$g\in\Z_n$ and $F<\Z_n$, be the smallest coset containing $A'$.
 If $a_1\longc a_k$ list representatives of the $F$-cosets intersecting
$A''$, other than the coset $g+F$ (which can possibly contain elements of
$A''$) then $2A',a_1+A'\longc a_k+A'$ reside in pairwise distinct $F$-cosets
and, therefore, are disjoint. As a result
  $$ (k+1)|A'| \le |2A'|+|a_1+A'|\longp |a_k+A'|\le |2A|
                                          < \frac94\,|A| < \frac52|A'|, $$
showing that $k\le 1$. Indeed, $k=1$ as if we had $k=0$, then $A$ were
contained in $g+F$, which is a proper coset.

Reverting the last computation and taking the result a little further,
  $$ \frac52\,|A'| > \frac94|A| > |2A| \ge |2A'|+|a_1+A'| $$
whence $|2A'|<\frac32|A'|$. Therefore $A'$ is a VSDS; moreover, by
Lemma~\refl{1.5} and the minimality of $F$, we have $A'-A'=F$, $|2A'|=|F|$,
and $|A'|>\frac23|F|$. Now from $|F|<\frac32\,|A'|<\frac32|A|$ and
Lemma~\refl{numbers} we see that $|F|>\frac13\,n$. On the other hand,
$A\seq(g+F)\cup(a_1+F)$, contradicting Lemma~\refl{twocoset}.
\end{proof}

Recall that we have defined $m:=n/|L|$.
\begin{claim}\label{m:A'prog}
For any subgroup $K\le L$, the set $\phi_K(A')$ is not contained in an
arithmetic progression with $\lcl\frac m6\rcl$ or fewer terms.
\end{claim}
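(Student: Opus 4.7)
The plan is to assume the contrary and apply Proposition~\refp{combo} to the
set $A'$.

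Assume for contradiction that there exist a subgroup $K\le L$ and an
arithmetic progression $R\seq\Z_n/K$ of length $|R|\le\lceil m/6\rceil$
containing $\phi_K(A')$. First I would argue that $R$ must be primitive
in $\Z_n/K$: otherwise its difference generates a proper subgroup of
$\Z_n/K$, and pulling back places $A'$ inside a proper coset of $\Z_n$,
contradicting Claim~\refm{A'per}. Since $[\Z_n:K]\ge m$ (as $K\le L$) and
$|R|\le\lceil m/6\rceil\le([\Z_n:K]+1)/2$, the primitive progression $R$
is rectifiable, so $\phi_K(A')$ is rectifiable as well.

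Next, I would record a doubling bound on $A'$. Combining the inequality
$|A''|<|A'|/9$ from~\refe{ac1} with $|2A|<\tfrac94|A|$ gives
$$
  |2A'|\le|2A|<\tfrac94\,|A|<\tfrac94\cdot\tfrac{10}9\,|A'|
                                                  =\tfrac52\,|A'|.
$$
Set $s:=|\phi_K(A')|\le|R|$. The inequality $3(1-1/s)\ge\tfrac52$ holds
exactly when $s\ge 6$. In that case, Proposition~\refp{combo} applies to
$A'$ with the subgroup $K$, producing an arithmetic progression
$Q\seq\Z_n$ and a proper subgroup $H<\Z_n$ such that $A'\seq Q+H$,
$|Q+H|=|Q||H|$, and $(|Q|-1)|H|\le|2A'|-|A'|$.

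The next step is to extend this structure from $A'$ to all of $A$ by
enlarging $Q$ to cover the $H$-cosets meeting $A''$. Since $|A''|<|A'|/9$
is small, the enlarged progression $Q^*$ should still satisfy
$(|Q^*|-1)|H|\le|2A|-|A|$, placing $A$ in case~ii) or~iii) of
Theorem~\reft{aux} and contradicting the fact that $A$ is a counterexample.
Controlling the size of $Q^*$ is the main technical obstacle, since
enlarging an AP to accommodate extra cosets can in principle force $|Q^*|$
to grow substantially; I expect that the sharp bound on $|A''|/|A'|$
together with $|Q+H|=|Q||H|$ is exactly what is needed here.

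Finally, the range $s\le 5$ requires separate treatment, as
Proposition~\refp{combo} no longer applies. Here $A'$ meets at most five
$K$-cosets, so $A$ itself meets few $K$-cosets (as $|A''|<|A'|/9$). Using
Lemmas~\refl{twocoset} and~\refl{threecosets} applied to $K$, together with
$[\Z_n:K]\ge m\ge 37$ and the primitivity of $R$, one derives a
contradiction.
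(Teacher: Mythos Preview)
Your plan misses the key idea, and both cases have real gaps that this idea would fill.

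The paper's proof is far shorter and does not invoke Proposition~\refp{combo} at all. First it reduces to $K=L$ (if $\phi_K(A')$ sits in a $k$-term progression, then so does $\phi_L(A')$, since $K\le L$). The crucial observation is then
\[
  A \seq 2A'-A':
\]
indeed, if some $a\in A$ lay outside $2A'-A'$, then $a+A'$ would be disjoint from $2A'$, and since $A'$ is not a VSDS (Claim~\refm{A'per}) one gets $|2A|\ge|a+A'|+|2A'|\ge\frac52|A'|>\frac94|A|$. Now if $\phi_L(A')$ is contained in a $k$-term progression with $k\le\lceil m/6\rceil$, then $\phi_L(A)\seq\phi_L(2A'-A')$ is contained in a $(3k-2)$-term progression, and $3k-2\le(m+1)/2$. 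By Claim~\refm{A'per} the difference generates $\Z_n/L$, so $\phi_L(A)$ is rectifiable --- which is exactly the contradiction sought in the ambient proof.

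Your $s\ge 6$ branch founders at the extension step: there is no mechanism forcing the $H$-cosets met by $A''$ to lie near the progression $Q$. ``Enlarging $Q$ to $Q^*$'' could blow $|Q^*|$ up arbitrarily; the bound $|A''|<|A'|/9$ controls the \emph{size} of $A''$, not its position relative to $Q+H$. What actually pins $A''$ down is precisely the containment $A\seq 2A'-A'$ above (the paper uses the same device later, in Case~4 of Section~\refs{final}, when it does apply Proposition~\refp{combo} to $A'$). Your $s\le 5$ branch is also incomplete: Lemma~\refl{threecosets} needs $\phi_K(A)$ rectifiable, not just $\phi_K(A')$, and the bound on $|A''|$ does not by itself limit how many $K$-cosets $A''$ can meet.
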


\begin{proof}
If, for some $a,d\in\Z_n$ and $k\ge 1$ we have
  $$ \phi_K(A')\seq \{\phi_K(a)+i\phi_K(d)\colon i\in[0,k-1] \}, $$
then
  $$ A'\seq \bigcup_{i\in[0,k-1]} (a+id+K) , $$
whence
  $$ \phi_L(A') \seq \{ \phi_L(a)+i\phi_L(d)\colon i\in[0,k-1]\}. $$
Therefore, it suffices to prove the assertion in the special case where
$K=L$.

By Claim~\refm{A'per}, the set $A'$ is not a VSDS; therefore
\begin{equation}\label{e:0304a}
  |2A'|\ge\frac32|A'|
\end{equation}
by Lemma~\refl{1.5}.

If $A$ contained an element $a\notin 2A'-A'$, then $a+A'$ would be disjoint
from $2A'$, and from~\refe{0304a} we would get
  $$ |2A| \ge |a+A'|+|2A'| \ge \frac52\,|A'| > \frac94\,|A| $$
(cf.~\refe{0308a}). Thus,
\begin{equation}\label{e:2A'A'}
  A \seq 2A'-A'.
\end{equation}

Suppose now that $\phi_L(A')$ is contained in an arithmetic progression with
$k\le\lcl\frac m6\rcl$ terms. Then, by~\refe{2A'A'}, the set $\phi_L(A)$ is
contained in a progression with $3k-2\le\frac{m+1}2$ terms. By
Claim~\refm{A'per}, the difference of this progression generates $\Z_n/L$. It
follows that $\phi_L(A)$ is rectifiable.
\end{proof}

By Lemma~\refl{threecosets}, if $\phi_L(A)$ is rectifiable, then
$|\phi_L(A)|\ge 4$. We now show that the conclusion $|\phi_L(A)|\ge 4$ holds
true regardless of the rectifiability of $\phi_L(A)$.

\begin{claim}
The set $A$ determines at least four distinct $L$-cosets; that is,
$|\phi_L(A)|\ge 4$.
\end{claim}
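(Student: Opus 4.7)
The plan is to rule out the one remaining possibility, $|\phi_L(A)|=3$. Lemma~\refl{twocoset} already gives $|\phi_L(A)|\ge 3$ since the index $m=n/|L|\ge 37$ exceeds $3$, so it suffices to derive a contradiction from the assumption $|\phi_L(A)|=3$. The strategy is to examine the image $\phi_L(A')\seq\phi_L(A)$ and carry out a short dichotomy on $|\phi_L(A')|\in\{2,3\}$, which exhausts the possibilities once two preliminary observations are in place.

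The first preliminary observation is that $\phi_L(A')$ is rectifiable. Indeed, $A'\seq Q_0+L$ yields $\phi_L(A')\seq\phi_L(Q_0)$, and $\phi_L(Q_0)$ is an arithmetic progression in $\Z_n/L\cong\Z_m$ whose difference has order $m$ (because the difference of $Q_0$ generates $\Z_n$) and whose length is at most $(m+1)/2$; by the general remark on short progressions in Section~\refs{intro}, such a progression is rectifiable, and rectifiability passes to subsets. The second observation is that $|\phi_L(A')|\ge 2$: otherwise $A'$ would be contained in a single $L$-coset—a proper coset of $\Z_n$, since $L$ is proper—contradicting Claim~\refm{A'per}.

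With these in hand, I would split into two cases. If $|\phi_L(A')|=3$, then $\phi_L(A')=\phi_L(A)$, so $\phi_L(A)$ itself is rectifiable; Lemma~\refl{threecosets} then forces $|\phi_L(A)|\ge 4$, contradicting $|\phi_L(A)|=3$. If $|\phi_L(A')|=2$, then $\phi_L(A')$ is trivially a $2$-term arithmetic progression; since $2\le\lcl m/6\rcl$ for $m\ge 37$, this directly contradicts Claim~\refm{A'prog}. Either branch yields a contradiction, so $|\phi_L(A)|\ne 3$ and hence $|\phi_L(A)|\ge 4$.

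There is no substantial obstacle here: all of the technical work has been absorbed into the preceding results (rectifiability of $\phi_L(Q_0)$, Claims~\refm{A'per} and~\refm{A'prog}, and Lemma~\refl{threecosets}). The only thing to be careful about is that Claim~\refm{A'prog} is phrased for any subgroup $K\le L$, so it indeed applies to $K=L$; and that the lift from ``$A'$ not in a proper coset of $\Z_n$'' to ``$\phi_L(A')$ not a singleton'' uses only that $L$ itself is proper.
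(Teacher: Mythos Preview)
Your proof is correct and follows essentially the same line as the paper: reduce via Lemma~\refl{twocoset} to $|\phi_L(A)|=3$, and then reach a contradiction with Claim~\refm{A'prog} through the case $|\phi_L(A')|=2$. The one difference is in how you dispose of the possibility $|\phi_L(A')|=3$: you invoke Lemma~\refl{threecosets} (since then $\phi_L(A)=\phi_L(A')$ is rectifiable), whereas the paper rules this case out more directly by recalling that the $L$-cosets determined by $A'$ and $A''$ are disjoint (stated just before Claim~\refm{A''ne}) and that $A''\ne\est$ (Claim~\refm{A''ne}), so $A'$ can meet at most two of the three cosets. Both routes are short; the paper's avoids the appeal to Lemma~\refl{threecosets} at this point, but your argument is perfectly valid.
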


\begin{proof}
With Lemma~\refl{twocoset} in mind, suppose for a contradiction that $A$
determines exactly three $L$-cosets. By Claims~\refm{A''ne} and~\refm{A'per},
the set $A'$ meets exactly two of these three cosets. Hence,
$|\phi_L(A')|=2$; therefore, $\phi_L(A')$ is a (two-term) progression,
contradicting Claim~\refm{A'prog}.
\end{proof}

Write $s:=|\phi_L(A')|$, and let $A'=A_1\longu A_s$ where each of the sets
$A_1\longc A_s$ is contained in an $L$-coset, the cosets are pairwise
disjoint, and $|A_1|\longge|A_s|>0$. By Claims~\refm{A''ne}, \refm{A'per},
and~\refm{A'prog}, we have $s\ge 3$, and we proceed to consider separately
the cases where $s=3$, $s=4$, $s=5$, and $s\ge 6$. (The ``typical'' scenario
is addressed in the last case, which also is much less technical to treat;
for this reason, the reader may consider skipping directly to this case.)

\newcases
\case{$s=3$}

By Claim~\refm{A'prog}, the set $\phi_L(A')$ is not an arithmetic
progression; hence, in the representation
  $$ 2A' = 2A_1 \cup 2A_2 \cup 2A_3
                                \cup(A_1+A_2)\cup(A_2+A_3) \cup (A_3+A_1) $$
the union is disjoint and indeed, all sets in the right-hand side reside in
distinct $L$-cosets. (We cannot have $\phi_L(2A_i)=\phi_L(2A_j)$ with $i\ne
j$ since this would imply $2\phi_L(A_i)=2\phi_L(A_j)$, contradicting
rectifiability of $\phi_L(A')$.) Thus,
\begin{multline}
  \frac52(|A_1|+|A_2|+|A_3|) = \frac 52\,|A'| > |2A'| \\
       = |2A_1| + |2A_2| + |2A_3| + |A_1+A_2| + |A_2+A_3| + |A_3+A_1|.
             \label{e:bps3}
\end{multline}

\begin{claim}\label{m:2404a}
$A_1$ is a VSDS; moreover, letting $K:=A_1-A_1$, we have $K\le L$.
\end{claim}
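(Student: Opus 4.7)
The containment $K\le L$ is immediate: since $A_1$ is contained in a single $L$-coset, every difference $a-a'$ with $a,a'\in A_1$ lies in $L$, so the subgroup $K=A_1-A_1$ sits inside $L$. The substantive part of the claim is that $A_1$ is a very-small-doubling set. The plan is to argue by contradiction, assuming $|2A_1|\ge\frac32|A_1|$, and to derive a violation of~\refe{bps3} by producing strong enough lower bounds on the six summands on the right-hand side.

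For the three summands involving $A_1$, the key tool is Corollary~\refc{AVSDS}: since $A_1$ is not a VSDS by assumption, it yields $|A_1+A_i|\ge 2\min\{|A_i|,\tfrac34|A_1|\}$ for $i=2,3$, while $|2A_1|\ge\tfrac32|A_1|$ is our standing assumption. For the remaining three summands I would begin with the trivial lower bounds $|2A_i|\ge|A_i|$ and $|A_2+A_3|\ge\max\{|A_2|,|A_3|\}$, refining them by a further application of Corollary~\refc{AVSDS} to $A_2$ (and/or $A_3$) whenever these are not themselves VSDS. Writing $a_i:=|A_i|$ with $a_1\ge a_2\ge a_3$, I would split the analysis into a balanced case $a_2\ge\tfrac34 a_1$ (with a further sub-split according as $a_3\ge\tfrac34 a_1$ or not) and an unbalanced case $a_2<\tfrac34 a_1$. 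The balanced case closes quickly, since the resulting inequalities $a_2+3a_3>4a_1$ and $a_2>a_1+a_3$ are both immediately impossible in view of $a_2,a_3\le a_1$.

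The hard case is the unbalanced one $a_2<\tfrac34 a_1$, where the bounds above only force the consistent inequality $2a_1>3a_2+a_3$ and do not yield a direct contradiction. To close it I would additionally invoke Lemma~\refl{kemp1} applied to $A+A_1$, giving $|A+A_1|\ge|A|+a_1$, together with the disjoint decomposition
\[
  A+A_1 = 2A_1\cup(A_1+A_2)\cup(A_1+A_3)\cup(A_1+\tilde A)
\]
in distinct $L$-cosets, where $\tilde A:=A\stm A'$. This sharpens the lower bound on $|2A_1|+|A_1+A_2|+|A_1+A_3|$ by the amount $|A|+a_1-|A_1+\tilde A|$, and the smallness estimate $|\tilde A|<\tfrac19|A'|$ from~\refe{ac1} lets me keep $|A_1+\tilde A|$ under control. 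The main obstacle I anticipate is precisely this unbalanced sub-case, which requires careful bookkeeping across further sub-cases distinguished by whether $A_2,A_3$ are themselves VSDS and by the comparative size of $a_3$ against $a_2$, together with tight control on the contribution from the small exceptional part $\tilde A$.
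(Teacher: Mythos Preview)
Your detour through Lemma~\refl{kemp1} and the exceptional set $\tilde A=A''$ does not close the unbalanced case. Writing out your proposed bound, one gets
\[
  |2A_1|+|A_1+A_2|+|A_1+A_3|=|A+A_1|-|A_1+\tilde A|\ge|A|+a_1-|A_1+\tilde A|,
\]
and substituting into~\refe{bps3} together with the trivial bounds on the remaining three summands reduces to $|A''|-|A_1+A''|\ge\frac12(a_1-a_2+a_3)$. The left side is nonpositive while the right side is positive, so nothing is gained: the improvement from Lemma~\refl{kemp1} is exactly offset by the loss $|A_1+\tilde A|\ge|\tilde A|$, and the bound $|\tilde A|<\frac19|A'|$ is irrelevant here.

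The good news is that your own tools already suffice, without any appeal to $\tilde A$. You have the trivial bound $|A_1+A_i|\ge a_1$ in addition to Corollary~\refc{AVSDS}'s $|A_1+A_i|\ge 2\min\{a_i,\frac34 a_1\}$; combining them gives $|A_1+A_i|\ge\frac12 a_1+a_i$ for $i=2,3$ (check the three ranges $a_i\le\frac12 a_1$, $\frac12 a_1\le a_i\le\frac34 a_1$, $a_i\ge\frac34 a_1$). Summing with $|2A_1|\ge\frac32 a_1$ and the trivial $|2A_2|\ge a_2$, $|2A_3|\ge a_3$, $|A_2+A_3|\ge a_2$ gives the right side of~\refe{bps3} at least $\frac52 a_1+3a_2+2a_3\ge\frac52|A'|$, contradicting~\refe{bps3}. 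No case split is needed at all.

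The paper argues differently: it uses Corollary~\refc{0308a} (the Olson-based bound) to get $|A_1+A_2|\ge a_2+\frac12 a_1$ directly, and splits instead on whether $A_2$ is a VSDS. If $A_2$ is not a VSDS it also uses $|2A_2|\ge\frac32 a_2$; if $A_2$ is a VSDS it observes that $A_1$, not being a VSDS but satisfying $|A_1|\ge|A_2|>\frac23|A_2-A_2|$, must meet at least two cosets of $A_2-A_2$, whence $|A_1+A_2|\ge 2|A_2|$. Either route reaches the same numerical conclusion; Corollary~\refc{0308a} simply packages the bound $\frac12 a_1+a_i$ in one step.
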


\begin{proof}
Assume for a contradiction that $A_1$ is not a VSDS, and suppose first that
$A_2$ is not a VSDS either. Then $|2A_1|\ge\frac32|A_1|$,
$|2A_2|\ge\frac32|A_2|$, and $|A_1+A_2|\ge|A_2|+\frac12|A_1|$ by
Corollary~\refc{0308a}. Combining these estimates with~\refe{bps3} and the
basic bound
 $|A_i+A_j|\ge|A_i|\ (1\le i\le j\le 3)$, we conclude that
\begin{align*}
  \frac52(|A_1| + |A_2|+|A_3|)
     &> \frac32|A_1| + \frac32|A_2| + |A_3|
                                    + |A_2| + \frac12|A_1| + |A_2| + |A_1| \\
     &= 3|A_1| + \frac72\,|A_2| + |A_3|
\end{align*}
leading to $3|A_3| > |A_1| + 2|A_2|$, a contradiction.

Thus, $A_2$ is a VSDS. Let $k$ denote the number of the $K$-cosets determined
by $A_1$; since $|A_1|\ge|A_2|>\frac32\,|K|$ and $A_1$ is not contained in a
$K$-coset with density exceeding $2/3$, we have $k\ge 2$. Also,
$|2A_1|>\frac32|A_1|$ and $|A_1+A_2|\ge k|A_2|$. This gives
  $$ \frac52(|A_1| + |A_2| + |A_3|)
                   > \frac32|A_1| + |A_2| + |A_3| + k|A_2| + |A_2| + |A_1| $$
whence
  $$ 3|A_3| > (2k-1)|A_2| \ge 3|A_2|, $$
a contradiction. Finally, we notice that $K=A_1-A_1$ implies $K\le L$ (as
$A_1$ is contained in an $L$-coset).
\end{proof}

Let $K$ denote the subgroup of Claim~\refm{2404a}; thus, $A_1$ is contained
in a $K$-coset and $|A_1|>\frac23|K|$.
\begin{claim}
Each of the sets $A_1,A_2,A_3$ is contained in a $K$-coset.
\end{claim}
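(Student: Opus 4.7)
The plan is to argue by contradiction: suppose that $A_2$ or $A_3$ meets two or more distinct $K$-cosets, and push inequality~\refe{bps3} to an arithmetic contradiction. Since the argument depends only on the sum $k_2+k_3$, where $k_j$ denotes the number of $K$-cosets met by $A_j$, we may assume $k_2\ge 2$ and $k_3\ge 1$. Decompose $A_2=B_1\cup\cdots\cup B_{k_2}$ and $A_3=C_1\cup\cdots\cup C_{k_3}$ into pieces lying in distinct $K$-cosets. Because $\phi_L(A')$ is rectifiable and $K\le L$, the six summands $2A_1$, $2A_2$, $2A_3$, $A_1+A_2$, $A_2+A_3$, $A_3+A_1$ of $2A'$ occupy pairwise distinct $K$-cosets and are therefore disjoint. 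Lemma~\refl{1.5} supplies $|2A_1|=|K|$, while each $A_1+B_l$ (and $A_1+C_l$) lies in a single $K$-coset of size at least $|A_1|$, with distinct indices giving distinct $K$-cosets; this yields $|A_1+A_2|\ge k_2|A_1|$ and $|A_1+A_3|\ge k_3|A_1|$.

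Feeding these into~\refe{bps3}, together with the trivial $|2A_j|\ge|A_j|$ for $j=2,3$, the inequality $|K|\ge|A_1|$, and a Kneser-type lower bound $|A_2+A_3|\ge|A_2|+|A_3|-1$, and simplifying, one arrives at
$$\bigl(\tfrac{3}{2}-k_2-k_3\bigr)|A_1|+\tfrac{1}{2}\bigl(|A_2|+|A_3|\bigr)>-1.$$
Since $k_2+k_3\ge 3$, the coefficient of $|A_1|$ is at most $-\tfrac{3}{2}$; combined with $|A_2|,|A_3|\le|A_1|$ (from the ordering $|A_1|\ge|A_2|\ge|A_3|$), the left side is at most $-\tfrac{1}{2}|A_1|$, forcing $|A_1|<2$. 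This contradicts $|A_1|\ge 2$: if $|K|=1$ then $A_1,A_2,A_3$ would be singletons and $|A'|=3$, so by~\refe{ac1} one would have $|A|<4$, contradicting $|A|\ge 11$; hence $|K|\ge 2$, which together with $|A_1|>\tfrac{2}{3}|K|$ gives $|A_1|\ge 2$.

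The chief obstacle is securing the Kneser estimate $|A_2+A_3|\ge|A_2|+|A_3|-1$. Kneser's theorem only supplies $|A_2+A_3|\ge|A_2|+|A_3|-|\pi(A_2+A_3)|$, and since $A_2+A_3$ sits in a single $L$-coset, its period can a priori be any subgroup of $L$. I expect to treat a nontrivial period $H':=\pi(A_2+A_3)$ by a case split: such periodicity forces $A_2$ or $A_3$ to be dense in a coset of $H'$, and combining this with $|A_1|>\tfrac{2}{3}|K|$, the ordering $|A_i|\le|A_1|$, and Olson-type bounds from Lemma~\refl{Olson} or Corollary~\refc{0308a}, one either contradicts $k_2\ge 2$ directly or strengthens the bound on another summand of~\refe{bps3} enough to preserve the contradiction. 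The structure of this case analysis should mirror the proof of Claim~\refm{2404a}.
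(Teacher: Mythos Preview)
Your proposal has a genuine gap: the bound $|A_2+A_3|\ge|A_2|+|A_3|-1$ is never established, and you admit this is the ``chief obstacle''. The plan you sketch for handling a nontrivial period $H':=\pi(A_2+A_3)$ is too vague to count as a proof; nothing in the ambient hypotheses forces $H'$ to be small, and the Olson/Corollary~\refc{0308a} machinery you mention does not obviously close the case. Without this bound your displayed inequality $(\tfrac32-k_2-k_3)|A_1|+\tfrac12(|A_2|+|A_3|)>-1$ collapses, since with only the trivial $|A_2+A_3|\ge|A_2|$ the case $k_2+k_3=3$ survives.

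The paper's proof sidesteps the Kneser issue entirely by a direct three-way case split on which of $A_2,A_3$ lie in a single $K$-coset. The key observation you are missing is this: whenever exactly one of $A_2,A_3$ lies in a single $K$-coset---call it $A_i$---the other meets at least two $K$-cosets, and therefore $|A_2+A_3|\ge 2|A_i|$ (each $K$-coset met by the other contributes at least $|A_i|$). Feeding this, together with $|A_1+A_j|\ge 2|A_1|$ for the $A_j$ meeting two cosets, into~\refe{bps3} already gives a contradiction with $|A_1|\ge|A_2|\ge|A_3|$; and when both $A_2,A_3$ meet two $K$-cosets, the trivial bound $|A_2+A_3|\ge|A_2|$ suffices because you then have $|A_1+A_2|+|A_1+A_3|\ge 4|A_1|$. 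So the argument needs only elementary coset-counting bounds on $|A_2+A_3|$, not the Kneser-type estimate you aimed for.
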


\begin{proof}
If neither $A_2$ nor $A_3$ is contained in an $K$-coset, then
 $|A_2+A_1|\ge 2|A_1|$ and $|A_3+A_1|\ge 2|A_1|$ whence
   $$ \frac52(|A_1|+|A_2|+|A_3|)
       > |A_1|+|A_2|+|A_3| + 2|A_1| + 2|A_1| + |A_2| $$
resulting in
  $$ 5|A_1| < |A_2| + 3|A_3|, $$
which is wrong.

If $A_2$ is not contained in an $K$-coset, while $A_3$ is, then
$|A_1+A_2|\ge2|A_1|$ and $|A_2+A_3|\ge 2|A_3|$, and then
  $$ \frac52(|A_1|+|A_2|+|A_3|)
       > |A_1|+|A_2|+|A_3| + 2|A_1| + |A_1| + 2|A_3|, $$
  $$ 3|A_1| + |A_3| < 3|A_2|, $$
a contradiction.

Finally, if $A_2$ is contained in an $K$-coset, while $A_3$ is not, then
$|A_1+A_3|\ge2|A_1|$ and $|A_2+A_3|\ge2|A_2|$; as a result,
  $$ \frac52(|A_1|+|A_2|+|A_3|)
       > |A_1|+|A_2|+|A_3| + |A_1| + 2|A_2| + 2|A_1|, $$
  $$ 3|A_1| + |A_2| < 3|A_3|, $$
a contradiction.

The assertion follows.
\end{proof}

Let $A''=B_1\longu B_t$ be the $K$-coset decomposition of $A''$; that is,
each of $B_1\longc B_t$ is contained in a $K$-coset, and the cosets are
pairwise disjoint. Write $\cA':=\phi_K(A')$, $\cA'':=\phi_K(A'')$, and
$\cA:=\phi_K(A)$; thus, $|\cA'|=3$, $|\cA''|=t$, and $|\cA|=3+t$.

We have
  $$ \frac94\,|A| > |2A| \ge |A+A_1|
         \ge (3+t)|A_1| \ge \frac{3+t}3 |A'| > \frac{3+t}3\cdot 0.9|A| $$
whence $t\le 4$. We now improve this estimate as follows.
\begin{claim}\label{m:1807a}
We have $t\le 2$.
\end{claim}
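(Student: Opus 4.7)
The plan is to strengthen the estimate $|A+A_1|\ge(3+t)|A_1|$, which yielded $t\le 4$, by extracting further contributions to $|2A|$ from $K$-cosets outside $\phi_K(A_1)+\phi_K(A)$.

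First, since $A_1$ is a VSDS with $A_1-A_1=K$, Lemma~\refl{1.5} gives $|2A_1|=|K|$, so the $K$-coset decomposition reads
\[
  |A+A_1| = |K| + |A_1+A_2| + |A_1+A_3| + \sum_{j=1}^{t}|A_1+B_j| \ge |K| + (2+t)|A_1|.
\]
Second, I observe that $\phi_K(A')$ is rectifiable (because $\phi_L(A')$ is rectifiable and the canonical map $\phi_{L/K}$ is injective on $\phi_K(A')$, as both sets have exactly three elements) and, by Claim~\refm{A'prog}, not an arithmetic progression. Hence $|2\phi_K(A')|=6$, and the three cosets $2\phi_K(A_2)$, $2\phi_K(A_3)$, $\phi_K(A_2)+\phi_K(A_3)$ lie outside $\phi_K(A_1)+\phi_K(A')$, since any coincidence would force an AP on $\phi_K(A')$.

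Third, assuming these three additional cosets are also disjoint from $\phi_K(A_1)+\phi_K(A'')$, they contribute at least $|A_2|$, $|A_3|$, and $|A_2|$ elements to $|2A|$, respectively. This gives $|2A|\ge|K|+(2+t)|A_1|+2|A_2|+|A_3|$. Combining with $|2A|<\frac94|A|$, and using $|A_i|\le|A_1|$ together with $|A''|<|A'|/9<|K|/3$ (which follow from $|A'|>0.9|A|$ and $|A'|\le 3|K|$), the inequality collapses, for $t\ge 3$, to
\[
  |K|+11|A_1| < |A_2|+5|A_3| \le 6|A_1|,
\]
an outright contradiction.

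The main obstacle is the case in which one of the cosets $2\phi_K(A_2)$, $2\phi_K(A_3)$, $\phi_K(A_2)+\phi_K(A_3)$ coincides with some coset $\phi_K(A_1)+\phi_K(B_j)$; that is, $\phi_K(B_j)\in\{2\phi_K(A_i)-\phi_K(A_1):i=2,3\}\cup\{\phi_K(A_2)+\phi_K(A_3)-\phi_K(A_1)\}$. In such a collision, the $K$-coset in question contains both $A_1+B_j$ (already counted in $|A+A_1|$) and $2A_i$ (or $A_2+A_3$); a union bound within the $K$-coset, leveraging that $A_1$ occupies more than two-thirds of a $K$-coset so that $|A_1+B_j|>\tfrac{2}{3}|K|$, still extracts enough net extra contribution to $|2A|$ to recover the contradiction.
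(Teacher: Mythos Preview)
Your outline is correct in the collision-free case, and your observation that $|2\phi_K(A')|=6$ (with the three ``extra'' cosets $2\alpha_2,\,2\alpha_3,\,\alpha_2+\alpha_3$ lying outside $\alpha_1+\cA'$) is valid. The gap is in the final paragraph: the union bound you invoke does not actually recover the needed estimate when two or three of the extra cosets collide with $\alpha_1+\cA''$. Concretely, suppose $t=3$ and all three collisions occur, say $\beta_j=2\alpha_2-\alpha_1$, so that $2A_2$ and $A_1+B_j$ share a $K$-coset. In that coset you have already counted $|A_1+B_j|\ge|A_1|$; the additional contribution is $|2A_2\setminus(A_1+B_j)|$, which can be zero (nothing prevents $2A_2\subseteq A_1+B_j$, as $|A_2|$ may be small while $|A_1+B_j|>\tfrac23|K|$). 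With two or three collisions you are left with at most one uncollided extra coset, contributing at most $|A_2|$, and the resulting inequality $5|A_2|+9|A_3|>|K|+11|A_1|$ (after the same bookkeeping as in your non-collision case) is \emph{not} a contradiction: with $|A_2|,|A_3|$ close to $|A_1|$ and $|A_1|>\tfrac23|K|$ it is satisfied. So the sketch ``still extracts enough net extra contribution'' fails.

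The paper avoids this obstacle by working one level up: instead of naming specific extra cosets and chasing collisions, it applies Olson's lemma (Lemma~\refl{Olson}) to the pair $(\cA,\cA')$ in $\Z_n/K$. Since $\cA'$ is not contained in a proper coset (Claim~\refm{A'per} lifts to $\Z_n/K$), one gets $|\cA+\cA'|\ge|\cA|+\tfrac12|\cA'|\ge t+5$. This guarantees, structurally, at least two $K$-cosets in $A+A'$ beyond the $t+3$ cosets of the form $\alpha_1+\cA$; each such extra coset is of the form $\alpha_i+\gamma$ with $i\in\{2,3\}$ and therefore contributes at least $|A_3|$. The bound $|A+A'|\ge(t+3)|A_1|+2|A_3|$ together with $|A+A'|<\tfrac52|A'|$ then gives $(2t+1)|A_1|<5|A_2|+|A_3|\le6|A_1|$, whence $t\le2$. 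The point is that Olson's lemma already absorbs whatever structure the collisions impose, so no case analysis is needed.
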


\begin{proof}
Let $\cH:=\pi(\cA+\cA')$. If $|\cA+\cA'|<|\cA|+\frac12|\cA'|$, then by
Lemma~\refl{Olson}, the set $\cA'$ is contained in an $\cH$-coset.
Consequently, $A'$ is contained in a coset of the subgroup
$\phi_K^{-1}(\cH)$. Hence, by Claim~\refm{A'per}, we have
$\phi_K^{-1}(\cH)=\Z_n$; that is, $\cH=\Z_n/K$, meaning that
$\cA+\cA'=\Z_n/K$. Therefore,
 $|\cA+\cA'|=n/|K|\ge |n/L|\ge 37>(3+t)+\frac32=|\cA|+\frac12|\cA'|$, a
contradiction.

We therefore conclude that
\begin{equation}\label{e:1507a}
  |\cA+\cA'| \ge |\cA|+\frac12\,|\cA'|
\end{equation}
and then indeed, rounding to an integer, $|\cA+\cA'|\ge 5+t$. It follows that
the set $A+A'$ consists of the $|\cA|=3+t$ subsets
$2A_1,A_1+A_2,A_1+A_3,A_1+B_1\longc A_1+B_t$, and at least two more subsets
of size at least $|A_3|$ each, all these subsets being pairwise disjoint. As
a result,
\begin{equation}\label{e:0707u}
  |A+A'| \ge (t+3)|A_1| + 2|A_3|.
\end{equation}
On the other hand,
  $$ |A+A'| \le |2A| < \frac94\,|A| < \frac52\,|A'| =
                                           \frac52\,(|A_1|+|A_2|+|A_3|). $$
Comparing this estimate with~\refe{0707u}, we get
\begin{gather*}
  (t+3)|A_1| + 2|A_3| < \frac52\,(|A_1|+|A_2|+|A_3|), \\
  (2t+1)|A_1| < 5|A_2| + |A_3|,
\end{gather*}
whence $t\in\{1,2\}$, as claimed.
\end{proof}

If $|(\cA'+\cA'')\stm 2\cA'|\ge 2$, then $|(B_1+A')\stm(2A')|\ge|A_2|+|A_3|$,
leading to
\begin{multline}\label{e:1207a}
  \frac52\,(|A_1|+|A_2|+|A_3|) = \frac52\,|A'| > \frac94\,|A| \\
       > |2A| \ge |2A'| + (|A_2|+|A_3|) \ge 3|A_1| + 3|A_2| + 2|A_3|.
\end{multline}
On the other hand, from~\refe{bps3} and the trivial estimate
 $|A_i+A_j|\ge |A_i|\ (1\le i\le j\le 3)$,
\begin{equation}
  |2A'| \ge 3|A_1| + 2|A_2| + |A_3|.
\end{equation}
From this estimate and~\refe{1207a} we get $|A_1|+|A_2|<|A_3|$, which is
obviously wrong.

Thus, $|(\cA'+\cA'')\stm 2\cA'|\le 1$. Consequently, for any $\bet\in\cA''$
there are (at least) two elements $\alp\in\cA'$ with $\bet+\alp\in2\cA'$.
Applying Lemma~\refl{alpha} and taking into account that $\cA'$ is not
contained in a four-term progression by Claim~\refm{A'prog}, we conclude that
if $\alp_1,\alp_2\in\cA'$ are elements with $\bet+\alp_i\in2\cA'$, then
$\{\alp_1,\alp_2,\bet\}$ is a coset of the three-element subgroup of
$\Z_n/K$. If $t=1$, then this shows that $A$ is contained in a union of two
cosets of the subgroup of size at most $3|K|$, contradicting
Lemma~\refl{twocoset}. If $t=2$, then writing $\cA''=\{\bet_1,\bet_2\}$, and
applying Lemma~\refl{alpha}, there are elements $\alp,\alp_1,\alp_2\in\cA'$
with $\alp\ne\alp_1$, $\alp\ne\alp_2$ such that both $\{\alp,\alp_1,\bet_1\}$
and $\{\alp,\alp_2,\bet_2\}$ are cosets of the three-element subgroup of
$\Z_n/K$. Sharing the same common element $\alp$, these cosets must be
identical, which is impossible since, for instance, $\alp,\alp_1,\bet,\bet_1$
are pairwise distinct.

\case{$s=4$}

By Claim~\refm{A'prog}, the set $\phi_L(A')$ is not contained in an
arithmetic progression with five or fewer terms; as a result, by
Theorem~\reft{3n-3} (as applied to the set of integers locally isomorphic to
$\phi_L(A')$, with $l=5$), we have
\begin{equation}\label{e:0508c}
  |2\phi_L(A')| \ge 9;
\end{equation}
that is, $2A'$ meets at least nine $L$-cosets. Of these
cosets, four are the cosets determined by the sums $A_1+A_1\longc A_1+A_4$,
and at least five more are determined by some other sums of the form
$A_i+A_j$, with $2\le i\le j\le 4$. Using the trivial estimate
$|A_i+A_j|\ge|A_i|$ for these sums, and observing that in the resulting
estimate the summand $|A_4|$ can appear at most once, and $|A_3|$ at most
twice, we get
\begin{equation}\label{e:2407x}
  \frac52\,|A'| > |2A'| \ge |A_1+A_1|\longp|A_1+A_4|+2|A_2|+2|A_3|+|A_4|.
\end{equation}

\begin{claim}\label{m:2407x}
$A_1$ is a VSDS.
\end{claim}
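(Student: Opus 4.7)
The plan is to argue by contradiction: assume $A_1$ is not a VSDS, so that $|2A_1|\ge\tfrac{3}{2}|A_1|$. The key ingredient is the Olson-type lower bound provided by Corollary~\refc{0308a}: applied with $A:=A_j$ and $B:=A_1$, and using $|A_j|\le|A_1|$ together with the assumed non-VSDS property of $A_1$, it yields
\[
|A_1+A_j|\ge|A_j|+\tfrac12|A_1|,\qquad j\in\{2,3,4\}.
\]

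Substituting these three bounds together with $|2A_1|\ge\tfrac{3}{2}|A_1|$ into the right-hand side of~\refe{2407x} gives
\[
|2A'|\ge \tfrac{3}{2}|A_1|+\sum_{j=2}^{4}\bigl(|A_j|+\tfrac12|A_1|\bigr)+2|A_2|+2|A_3|+|A_4|
     = 3|A_1|+3|A_2|+3|A_3|+2|A_4|.
\]
Comparing with the left-hand side inequality $|2A'|<\tfrac{5}{2}|A'|=\tfrac{5}{2}(|A_1|+|A_2|+|A_3|+|A_4|)$ and simplifying reduces to $|A_4|>|A_1|+|A_2|+|A_3|$, which contradicts the ordering $|A_4|\le|A_1|$.

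There is essentially no obstacle in this argument: once the correct lower bound on the cross-sumsets $|A_1+A_j|$ is available, the proof is a single substitution followed by elementary algebra. The reasoning closely parallels Claim~\refm{2404a} in the $s=3$ case, where an analogous Olson-type bound drives the contradiction; what makes the $s=4$ version so painless is that the coefficient pattern $(2,2,1)$ on $(|A_2|,|A_3|,|A_4|)$ in~\refe{2407x}, combined with the $\tfrac12|A_1|$ contributions from each of the three cross-sumsets, already saturates the budget $\tfrac52|A'|$ strictly.
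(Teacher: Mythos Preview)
Your proof is correct and follows essentially the same approach as the paper: assume $A_1$ is not a VSDS, invoke Corollary~\refc{0308a} to bound the cross-sumsets $|A_1+A_j|$ from below, substitute into~\refe{2407x}, and derive a contradiction. The only difference is that you apply Corollary~\refc{0308a} to all three cross-sumsets $A_1+A_j$ ($j=2,3,4$), whereas the paper applies it only to $A_1+A_2$ and uses the trivial bound $|A_1+A_j|\ge|A_1|$ for $j=3,4$; both variants lead immediately to a contradiction.
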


\begin{proof}
Assuming for the contradiction that $A_1$ is not a VSDS, by
Corollary~\refc{0308a} we have $|A_1+A_2|\ge|A_2|+\frac12\,|A_1|$.
Substituting to~\refe{2407x}, we obtain
\begin{align*}
  \frac52\,|A'| &> \frac32\,|A_1| + \Big(|A_2|+\frac12\,|A_1|\Big)
                                      + 2|A_1|+2|A_2|+2|A_3|+|A_4| \\
    &= 2|A'| + 2|A_1| + |A_2| - |A_4|.
\end{align*}
This simplifies to the obviously wrong inequality
  $$ 3|A_1| + |A_2| < |A_3|+3|A_4|, $$
a contradiction proving the claim.
\end{proof}

Let $K:=A_1-A_1$; thus, $K$ is a subgroup of $L$, and $A_1$ is contained in a
$K$-coset with $|A_1|>\frac23|K|$; also, $|2A_1|=|K|$. Notice that $K$ is
nonzero (else $|A_1|=1$ and then $|A'|=4$ contradicting~\refe{ac1}).

\begin{claim}\label{m:2407y}
Each of the sets $A_1,A_2,A_3,A_4$ is contained in a single $K$-coset.
\end{claim}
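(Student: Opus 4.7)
The plan is to argue by contradiction. Observe first that $A_1$ is automatically contained in a single $K$-coset, since $K=A_1-A_1$ means any $a\in A_1$ serves as a coset representative; also $K\le L$, because $A_1$ lies in an $L$-coset. So the content of the claim concerns $A_2,A_3,A_4$; assume toward contradiction that some $A_j$ with $j\in\{2,3,4\}$ meets $k_j\ge 2$ distinct $K$-cosets.

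The key estimate is a coset-splitting bound: since $A_1$ lives in a single $K$-coset, whenever $A_i$ meets $k_i$ distinct $K$-cosets the sum $A_1+A_i$ decomposes as a disjoint union of $k_i$ nonempty pieces, each lying in a distinct $K$-coset and each of size at least $|A_1|$. Hence $|A_1+A_i|\ge k_i|A_1|\ge 2|A_1|$ for the designated index $j$, while $|A_1+A_i|\ge|A_1|$ trivially for the other two indices in $\{2,3,4\}$. Combined with $|A_1+A_1|=|2A_1|=|K|$, which follows from Claim~\refm{2407x} via Lemma~\refl{1.5} (the same reference also giving $|A_1|>\tfrac{2}{3}|K|$), the four summands $|A_1+A_i|$ on the right-hand side of~\refe{2407x} total at least $|K|+4|A_1|$.

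Inserting this into~\refe{2407x} and rearranging yields
\[
  \tfrac{1}{2}|A_2|+\tfrac{1}{2}|A_3|+\tfrac{3}{2}|A_4|
        > |K|+\tfrac{3}{2}|A_1|.
\]
Since $|A_i|\le|A_1|$ for every $i$, the left-hand side is at most $\tfrac{5}{2}|A_1|$; on the other hand, $|K|\ge|A_1|$ forces the right-hand side to be at least $\tfrac{5}{2}|A_1|$, producing the desired contradiction. Hence no $A_j$ with $j\in\{2,3,4\}$ meets more than one $K$-coset, which is the claim.

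The proof is really a short bookkeeping computation; the only point requiring any care is the coset-splitting estimate $|A_1+A_i|\ge k_i|A_1|$, which rests squarely on $A_1$ being confined to a single $K$-coset. I do not anticipate any substantial obstacle beyond tracking this bound accurately.
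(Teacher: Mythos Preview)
Your proof is correct and, for the claim itself, cleaner than the paper's. Both arguments feed lower bounds for the four terms $|A_1+A_i|$ into~\refe{2407x}; the difference is in how strong those bounds are. You use only the trivial coset-splitting estimate $|A_1+A_j|\ge 2|A_1|$ for the offending index and $|A_1+A_i|\ge|A_1|$ otherwise, together with $|2A_1|=|K|$, and close directly via $\frac12|A_2|+\frac12|A_3|+\frac32|A_4|\le\frac52|A_1|\le|K|+\frac32|A_1|$. The paper instead bootstraps: it first deduces $|A_1|+|A_i|>|K|$ (so $|A_1+A_i|\ge|K|$ by Lemma~\refl{2107a}~i)), feeds this back into~\refe{2407x} to obtain the sharper inequality~\refe{2507f}, then shows $|A_1|+\frac12|A_i|>|K|$, and finally invokes Lemma~\refl{2107a}~ii) to get $|A_1+A_j|\ge|A_1|+|K|$ for the offending index.

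The trade-off is that the paper's longer route yields the byproduct~\refe{2507f}, namely $5|A_1|+|A_2|+|A_3|+3|A_4|>8|K|$, which is used immediately after the claim to conclude that $|A_i|+|A_j|>|K|$ for all $i,j$ with the possible exception $i=j=4$. Your argument does not produce this, so if one adopts your proof of the claim, that inequality would need to be derived separately afterward (which is easy: once all $A_i$ are known to lie in single $K$-cosets, one has $|A_1+A_i|\ge|K|$ whenever $|A_1|+|A_i|>|K|$, and a short bootstrap recovers~\refe{2507f}).
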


\begin{proof}
From~\refe{2407x}, in view of $|2A_1|=|K|$, we have
\begin{gather*}
  \frac52\,|A'| > |K| + 3|A_1|+2|A_2|+2|A_3|+|A_4|, \\
  |A_2| + |A_3| + 3|A_4| > |A_1| + 2|K|,
\end{gather*}
resulting in $|A_1|+3|A_4|>2|K|$. Hence,
  $$ |A_1|+|A_4| = \frac12\,(|A_1|+3|A_4|) + \frac12\,(|A_1|-|A_4|) > |K|, $$
and then indeed $|A_1|+|A_i|>|K|$ for all $i\in[1,4]$, leading to
\begin{equation}\label{e:0508b}
  |A_1| > \frac12\,|K|
\end{equation}
and, by Lemma~\refl{2107a}, to $|A_1+A_i|\ge|K|$.

Substituting this estimate back to~\refe{2407x}, we now get
\begin{gather}
  \frac52\,|A'| > 4|K|+2|A_2|+2|A_3|+|A_4|, \notag \\
  5|A_1| + |A_2| + |A_3| + 3|A_4| > 8|K|, \label{e:2507f}
\end{gather}
which leads to
\begin{gather}
 7|A_1|+3|A_4| > 8|K|, \notag \\
 |A_1|+\frac12\,|A_i| \ge |A_1|+\frac12\,|A_4| > |K|, \label{e:2507b}
\end{gather}
for all $i\in\{2,3,4\}$.

If, for some $i\in\{2,3,4\}$, the set $A_i$ determines two or more
$K$-cosets, then in view of~\refe{0508b} and~\refe{2507b}, by
Lemma~\refl{2107a}~ii) we have $|A_1+A_i|\ge|A_1|+|K|$. Reusing~\refe{2407x},
we then get
\begin{gather*}
  \frac52\,|A'| > 4|K|+|A_1|+2|A_2|+2|A_3|+|A_4|, \\
  3|A_1| + |A_2| + |A_3| + 3|A_4| > 8|K|,
\end{gather*}
which is wrong.
\end{proof}

Notice that from~\refe{2507f} we get
  $$ 8|K| < 5|A_1| + |A_2| + |A_3| + 3|A_4| \le 6|K| + 2(|A_3|+|A_4|). $$
It follows that $|A_i|+|A_j|>|K|$, and therefore $A_i+A_j$ is a $K$-coset for
all $i,j\in[1,4]$ with the possible exception of $i=j=4$. Consequently,
reconsidering the argument that lead us to~\refe{2407x}, we obtain
\begin{equation}\label{e:2507c}
   \frac52\,|A'| > |2A'| \ge 8|K|+|A_4|.
\end{equation}

Let $\cA':=\phi_K(A')$, $\cA'':=\phi_K(A'')$, and $\cA:=\phi_K(A)$; thus
$|\cA'|=4$. Furthermore, from~\refe{0508c} we have
  $$ |2\cA'| = |2\phi_K(2A')| = |\phi_K(2A')|
                                   \ge |\phi_L(2A')| = |2\phi_L(A')| = 9. $$
Indeed, if we had $|2\cA'|\ge 10$, then instead of~\refe{2507c} we would be
able to get the estimate
  $$ \frac52\,|A'| > |2A'| \ge 9|K|+|A_4|, $$
which is wrong in view of $|A'|\le3|K|+|A_4|$. Thus $|2\cA'|=9$. Observing
that $\cA'$ determines $\binom 42+4=10$ sums $\alp_1+\alp_2$ with
$\alp_1,\alp_2\in\cA'$, we conclude that exactly two of these sums coincide,
while the rest are distinct from each other and from the two coinciding sums.

Write $t:=|\cA''|$ and $A''=B_1\longu B_t$ where each of $B_1\longc B_t$ is
contained in a $K$-coset, and the cosets are pairwise distinct; notice that
$|\cA|=4+t$.

If $\cA'+\cA''\not\seq2\cA'$, then there are $i\in[1,4]$ and $j\in[1,t]$ such
that the sum $A_i+B_j$ is disjoint from $2A'$; consequently,
from~\refe{2507c}
\begin{gather*}
  \frac52\,|A'| > \frac94\,(|A'|+|A''|)
               = \frac94\,|A| > |2A| \ge |2A'| + |A_i+B_j|
                                          \ge (8|K|+|A_4|) + |A_4|, \\
  5|A'| > 16|K| + 4|A_4|, \\
  5|A_1| + 5|A_2| + 5|A_3| + |A_4| > 16 |K|
\end{gather*}
which is wrong.

Therefore, $\cA'+\cA''\seq2\cA'$ implying
\begin{equation}\label{e:0508e}
  2\cA = 2\cA' \cup 2\cA''.
\end{equation}
In addition, from $\cA'+\cA''\seq2\cA'$ we derive that $\cA'+\cA\seq2\cA'$,
and since the inverse inclusion holds trivially, we have, indeed,
$\cA+\cA'=2\cA'$. Thus,
  $$ |\cA'|=4,\ |\cA''|=t,\ |\cA|=4+t,\ |\cA+\cA'|=|2\cA'| = 9. $$

From $A_1+A_1\longc A_1+A_4,A_1+B_1\longc A_1+B_t\seq 2A$ we get
  $$ \frac94\,|A| > |2A| \ge (t+4)|A_1|
                  \ge \frac{t+4}4\,|A'| > 0.9\, \frac{t+4}4\,|A| $$
which yields $t\le 5$. We can improve this bound as follows.

\begin{claim}\label{m:1807a}
We have $t\le 3$.
\end{claim}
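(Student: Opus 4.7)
The plan is to mimic the Olson-based sub-argument that established the $s=3$ analogue of this claim (the bound $t\le 2$ proved two cases above). The crucial identity we have just recorded is $\cA+\cA'=2\cA'$, which gives $|\cA+\cA'|=|2\cA'|=9$; this is the single numerical input driving the argument.

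First I would assume, for contradiction, that $t\ge 4$. Then
$|\cA|+\frac12\,|\cA'|=(4+t)+2\ge 10>9=|\cA+\cA'|$,
so the first alternative of Lemma~\refl{Olson}, applied to the pair $(\cA,\cA')$, fails; hence $\cA'$ is contained in a single coset of the period $\cH:=\pi(\cA+\cA')$ in $\Z_n/K$.

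Next I would lift this through $\phi_K$: the set $A'$ is then contained in a coset of $\phi_K^{-1}(\cH)\le\Z_n$, and by Claim~\refm{A'per} it is not contained in any proper coset of $\Z_n$. Therefore $\phi_K^{-1}(\cH)=\Z_n$, i.e.\ $\cH=\Z_n/K$. Consequently $\cA+\cA'$ is a nonempty $(\Z_n/K)$-periodic subset of $\Z_n/K$ and hence equals all of $\Z_n/K$, giving $|\cA+\cA'|=n/|K|\ge n/|L|=m\ge 37$, where I use $K\le L$ (recorded just after Claim~\refm{2407x}) and the lower bound $m\ge 37$ from Lemma~\refl{DF}. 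This contradicts $|\cA+\cA'|=9$, so $t\le 3$.

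The only item that requires care is bookkeeping: verifying that $\cA+\cA'=2\cA'$ (just proved above the claim), that $K\le L$, and that Claim~\refm{A'per} applies. Once these three ingredients are assembled, the Olson step closes the case automatically; in particular, no further combinatorial analysis of how $\cA''$ sits inside $2\cA'-\cA'$ is needed.
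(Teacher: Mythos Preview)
Your argument is correct. Both you and the paper invoke Lemma~\refl{Olson} together with Claim~\refm{A'per} to rule out $|\cA+\cA'|<|\cA|+\frac12|\cA'|$; the difference is what happens next. You feed the already-recorded identity $|\cA+\cA'|=|2\cA'|=9$ directly into this inequality: once Olson gives $|\cA+\cA'|\ge|\cA|+\tfrac12|\cA'|=t+6$, the equality $|\cA+\cA'|=9$ instantly yields $t\le 3$. The paper instead ignores the value $9$ at this point and converts the coset-count bound $|\cA+\cA'|\ge t+6$ into a lower bound $|A+A'|\ge(t+4)|A_1|+2|A_4|$, then compares with $|A+A'|<\frac52|A'|$ to obtain $(2t+3)|A_1|<5|A_2|+5|A_3|+|A_4|$ and hence $t\le 3$. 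Your route is strictly shorter and uses nothing the paper hasn't already established; the paper's extra step is modelled on the $s=3$ analogue (where $\cA+\cA'=2\cA'$ was \emph{not} yet known at the corresponding claim), and in the $s=4$ situation that detour is unnecessary.
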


\begin{proof}
Let $\cH:=\pi(\cA+\cA')$. If $|\cA+\cA'|<|\cA|+\frac12|\cA'|$, then by
Lemma~\refl{Olson}, the set $\cA'$ is contained in an $\cH$-coset.
Consequently, $A'$ is contained in a coset of the subgroup
$\phi_K^{-1}(\cH)$. Hence, by Claim~\refm{A'per}, we have
$\phi_K^{-1}(\cH)=\Z_n$; that is, $\cH=\Z_n/K$, meaning that
$\cA+\cA'=\Z_n/K$. Therefore,
 $|\cA+\cA'|=n/|K|\ge |n/L|\ge 37>6+t=|\cA|+\frac12|\cA'|$, a
contradiction.

Thus, $|\cA+\cA'|\ge|\cA|+\frac12|\cA'|=t+6$ showing that the set $A+A'$
consists of the $|\cA|=4+t$ subsets
$2A_1,A_1+A_2,A_1+A_3,A_1+A_4,A_1+B_1\longc A_1+B_t$, and at least two more
subsets of size at least $|A_4|$ each (with all these subsets pairwise
disjoint). As a result,
  $$ |A+A'| \ge (t+4)|A_1| + 2|A_4|. $$
On the other hand,
  $$ |A+A'| \le |2A| < \frac94\,|A| < \frac52\,|A'| =
                             \frac52\,(|A_1|+|A_2|+|A_3|+|A_4|). $$
Comparing the last two estimates, we get
  $$ (2t+3)|A_1| < 5|A_2|+5|A_3|+|A_4| $$
whence $t\le 3$.
\end{proof}

\subcase{$t=1$} In this case we have $A=A'\cup A''$ where $A'=A_1\cup A_2\cup
A_3\cup A_4$ with $A_1\longc A_4$ residing in pairwise distinct $K$-cosets,
and $A''$ resides in yet another $K$-coset. Moreover, $2A'$ is a disjoint
union of eight $K$-cosets, and one more set which is either a $K$-coset, or
the set $2A_4$ (contained in a $K$-coset). Also, from~\refe{0508e}, there are
at most two $K$-cosets containing some elements of $2A$, but not entirely
contained in $2A$: namely, the cosets determined by $2A_4$ and $2B''$. It
follows that $|2A+K|-|2A|\le(|K|-|2A_4|)+(|K|-|2A''|)$. Also,
$|A+K|-|A|=5|K|-|A|$. On the other hand, we observe that $K$ is nonzero (as
otherwise we would have $|A|=|\cA|=5$ contradicting~\refe{ac1}), and that
$|2A+K|\ne\Z_n$ (otherwise $\frac n{|K|}=|2\cA|\le|2\cA'|+1=10$ while, on the
other hand, $\frac n{|K|}\ge\frac n{|L|}\ge 37$). Consequently, we can apply
Lemma~\refl{MofA} to get
\begin{gather*}
  (|K|-|2A_4|)+(|K|-|2A''|) > 5|K|-|A|, \\
  |A| > 3|K| + |2A_4| + |2A''|
\end{gather*}
which is wrong in view of
 $$|A|=|A_1|+|A_2|+|A_3|+|A_4|+|A''| \le 3|K| +|A_4|+|A''|. $$

\bigskip
\subcase{$t\in\{2,3\}$} In this case $|\cA'|=4$, $|\cA''|=t$, $|\cA|=4+t$,
and $|\cA+\cA'|=|2\cA'|=9=|\cA|+|\cA'|-(t-1)$. Furthermore,
$|\cA|+|\cA'|=9<{|\Z_n/L|}\le{|\Z_n/K|}$, $|\cA|\ge|\cA'|\ge 2$, and $\cA'$
is rectifiable, not an arithmetic progression (by Claim~\refm{A'prog}), and
not contained in a proper coset (as a consequence of Claim~\refm{A'per}).
Thus, the assumptions of Lemma~\refl{kemp} are satisfied. Applying the lemma,
we conclude that there is a nonzero, finite, proper subgroup $\cH<\Z_n/K$
such that $\cA'$ meets two $\cH$-cosets and has exactly $(|H|+1)/2$ elements
in each of them. Since $|\cA'|=4$, we have $|\cH|=3$; thus, we can write
$\cA'=\{\alp_1,\alp_1+\del,\alp_2,\alp_2+\del\}$ where $\del$ a fixed element
of the group $\Z_n/K$ of order $3$ (so that $\cH=\{0,\del,2\del\}$), and
$\alp_1,\alp_2\in\Z_n/K$ belong to distinct $\cH$-cosets. Notice that, since
$\cA'$ is rectifiable, the element $\alp_1-\alp_2$ has order at least $4$.

Fix arbitrarily $\bet\in\cA''$. From $\cA'+\cA''\seq2\cA'$
(cf.~\refe{0508e}), we have $\bet+\alp\in2\cA'$ for any $\alp\in\cA''$. By
inspection, there are two elements $\bet\notin\cA'$ with this property:
$\bet=\alp_1-\del$ and $\bet=\alp_2-\del$. Hence, $t=2$ and
$\cA''=\{u-\del,v-\del\}$. It follows that
$\cA=(\alp_1+\cH)\cup(\alp_2+\cH)$; consequently, $A$ is contained in the
union of two cosets of the subgroup $\psi_K^{-1}(\cH)$. Since this subgroup
has size at most $|K||\cH|=3|K|\le 3|L|<n/2$, we can invoke
Lemma~\refl{twocoset} to complete the proof.

\case{$s=5$}

By Claim~\refm{A'prog}, the set $\phi_L(A')$ is not contained in an
arithmetic progression with seven or fewer terms; as a result, by
Theorem~\reft{3n-3} (as applied to the set of integers locally isomorphic to
$\phi_L(A')$, with $l=7$), we have
\begin{equation}\label{e:2907a}
  |2\phi_L(A')| \ge 12;
\end{equation}
that is, $2A'$ meets at least twelve $L$-cosets. Of these cosets, five are
the cosets determined by the sums $A_1+A_1\longc A_1+A_5$, and at least seven
more are determined by some other sums of the form $A_i+A_j$, with $2\le i\le
j\le 5$. Using the trivial estimate $|A_i+A_j|\ge|A_i|$ for these sums, and
observing that in the resulting estimate the summand $|A_5|$ can appear at
most once, $|A_4|$ at most twice, and $A_3$ at most three times, we get
  $$ \frac52\,|A'| > |2A'|
                   \ge |A_1+A_1|\longp|A_1+A_5|+|A_2|+3|A_3|+2|A_4|+|A_5|. $$
As a result
\begin{multline}\label{e:2807d}
  \frac52\,|A'| > |2A'| \ge 5|A_1|+|A_2|+3|A_3|+2|A_4|+|A_5| \\
       = 2|A'| + 3|A_1| - |A_2| + |A_3| - |A_5|.
\end{multline}
It follows that
\begin{equation}\label{e:1607a}
  5|A_1| + |A_3| < 3|A_2|+|A_4|+3|A_5|;
\end{equation}
consequently, $5|A_1|<3|A_2|+3|A_5|$ resulting in
  $$ |A_2| > \frac56\,|A_1|. $$

\begin{claim}\label{m:2107a}
$A_1$ is a VSDS.
\end{claim}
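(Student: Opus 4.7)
The plan is to mirror the argument used to establish Claim~\refm{2407x} in the $s=4$ case: assume for contradiction that $A_1$ is not a VSDS, sharpen two of the trivial bounds $|A_1+A_j|\ge|A_1|$ that went into~\refe{2807d}, and derive a linear inequality among $|A_1|\longc|A_5|$ incompatible with the ordering $|A_1|\longge|A_5|$.

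Concretely, assuming $A_1$ is not a VSDS, we have by definition $|2A_1|\ge\frac32|A_1|$. Moreover, since $|A_2|\le|A_1|$ and $A_1$ is not a VSDS, Corollary~\refc{0308a} yields $|A_1+A_2|\ge|A_2|+\frac12|A_1|$. Revisiting the derivation of~\refe{2807d}, in the lower bound
 $$ |2A'|\ge |A_1+A_1|+|A_1+A_2|+|A_1+A_3|+|A_1+A_4|+|A_1+A_5|+|A_2|+3|A_3|+2|A_4|+|A_5| $$
we may now replace $|A_1+A_1|\ge|A_1|$ with $|2A_1|\ge\frac32|A_1|$, and $|A_1+A_2|\ge|A_1|$ with $|A_1+A_2|\ge|A_2|+\frac12|A_1|$. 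Each substitution contributes an extra $\frac12|A_1|$, so the net improvement over~\refe{2807d} is $|A_1|$, giving
 $$ \tfrac52|A'|>|2A'|\ge 6|A_1|+|A_2|+3|A_3|+2|A_4|+|A_5|. $$

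Rearranging this inequality (the left-hand side equals $\frac52(|A_1|+|A_2|+|A_3|+|A_4|+|A_5|)$) and multiplying by $2$ yields
 $$ 3|A_2|+|A_4|+3|A_5|>7|A_1|+|A_3|. $$
Since $|A_2|,|A_4|,|A_5|\le|A_1|$, the left-hand side is at most $7|A_1|$, whereas the right-hand side is at least $7|A_1|$; this is the desired contradiction. The only step that requires a moment of care is verifying that the two improved bounds on $|2A_1|$ and $|A_1+A_2|$ are simultaneously available from the assumption that $A_1$ is not a VSDS, but this is immediate from Lemma~\refl{1.5} and Corollary~\refc{0308a}. No further ingredients are needed, and the argument parallels the corresponding steps used for $s=3$ and $s=4$.
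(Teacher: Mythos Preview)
Your approach is essentially the paper's, but there is an arithmetic slip. The second substitution does \emph{not} contribute an extra $\frac12|A_1|$: replacing $|A_1+A_2|\ge|A_1|$ with $|A_1+A_2|\ge|A_2|+\frac12|A_1|$ gains $|A_2|-\frac12|A_1|$, not $\frac12|A_1|$. Carrying the bookkeeping correctly, the improved lower bound reads
\[
  |2A'|\ \ge\ 5|A_1|+2|A_2|+3|A_3|+2|A_4|+|A_5|,
\]
not $6|A_1|+|A_2|+\cdots$. (Your displayed bound is in fact \emph{stronger} than the true one, since $|A_2|\le|A_1|$, so it does not follow from what you have.) With the corrected bound the rearrangement gives
\[
  |A_2|+|A_4|+3|A_5|\ >\ 5|A_1|+|A_3|,
\]
and now the left side is at most $5|A_1|$ while the right side is at least $5|A_1|$, still a contradiction; so the argument survives the fix.

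The paper proceeds more economically: it uses only the single improvement $|2A_1|\ge\frac32|A_1|$, replacing $5|A_1|$ by $\frac{11}{2}|A_1|$ in~\refe{2807d}, which sharpens~\refe{1607a} to $6|A_1|+|A_3|<3|A_2|+|A_4|+3|A_5|$ and hence $6|A_1|<3|A_2|+3|A_5|\le6|A_1|$. Your appeal to Corollary~\refc{0308a} is correct but unnecessary here.
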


\begin{proof}
If $|2A_1|\ge\frac32\,|A_1|$, then the summand $5|A_1|$ in~\refe{2807d} can
be replaced with $\frac{11}2\,|A_1|$, and then~\refe{1607a} can be improved
to $6|A_1|+|A_3|<3|A_2|+|A_4|+3|A_5|$. However, this implies
$6|A_1|<3|A_2|+3|A_5|$ which is obviously wrong.
\end{proof}

With Claim~\refm{2107a} in mind, let $K:=A_1-A_1$; thus, $K\le L$ is a
subgroup, $A_1$ is contained in a $K$-coset, $|A_1|>\frac23|K|$, and
$|2A_1|=|K|$. Notice that $K$ is nonzero (else $|A_1|=1$ and then $|A'|=5$
contradicting~\refe{ac1}).

From~\refe{1607a} we get
  $$ 5|A_1| < 3|A_2|+3|A_5| \le 3|A_1| + 3|A_5| $$
whence $|A_i|\ge|A_5|>\frac23\,|A_1|$ for each $i\in[1,5]$. Therefore
$|A_1|+|A_i|\ge\frac53\,|A_1|>|K|$, and then $|A_1+A_i|\ge|K|$ by
Lemma~\refl{2107a}. Consequently, we can improve~\refe{1607a} to write
\begin{multline*}
  \frac52\,|A'| > |2A'| \ge 5|K|+|A_2|+3|A_3|+2|A_4|+|A_5| \\
                          = 2|A'| + 5|K| - 2|A_1| - |A_2| + |A_3| - |A_5|.
\end{multline*}
It follows that
\begin{gather*}
  |A'| > 10|K| - 4|A_1| - 2|A_2| + 2|A_3| - 2|A_5|, \\
  5|A_1| + 3|A_2| + |A_4| + 3|A_5| > 10|K| + |A_3|, \\
  10|K| < 5|A_1| + 3|A_2| + 3|A_5| < 8|K| + 3|A_5|,
\end{gather*}
implying
\begin{equation}\label{e:1807a}
  |A_2|\ge\dotsb\ge|A_5|>\frac23|K|.
\end{equation}
Therefore
\begin{equation}\label{e:0608a}
  |A_i|+2|A_1|>2|K|.
\end{equation}

\begin{claim}\label{m:sep}
Each of the sets $A_1\longc A_5$ is contained in a single $K$-coset.
\end{claim}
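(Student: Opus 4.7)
The plan is to argue by contradiction: assume some $A_j$ with $j\in\{2,3,4,5\}$ is not contained in a single $K$-coset, and derive a lower bound on $|2A'|$ that contradicts $|2A'|<\frac52|A'|$.

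The key step is to apply Lemma~\refl{2107a}~ii) to the pair $(A_1,A_j)$: its hypotheses $|A_1|\ge\frac12|K|$ and $|A_j|>2(|K|-|A_1|)$ are supplied by $|A_1|>\frac23|K|$ and~\refe{0608a}, respectively. The lemma yields $|A_1+A_j|\ge|A_1|+|K|$, an improvement of $|A_1|$ over the bound $|A_1+A_j|\ge|K|$ used just above to derive~\refe{1807a}. Plugging this into the same counting argument for $|2A'|$ (in which the five $L$-cosets coming from $A_1+A_1,\dots,A_1+A_5$ contribute at least $5|K|$, and seven further $L$-cosets from the other $A_i+A_j$ contribute at least $|A_2|+3|A_3|+2|A_4|+|A_5|$) upgrades the lower bound by $|A_1|$:
$$|2A'|\ge 5|K|+|A_1|+|A_2|+3|A_3|+2|A_4|+|A_5|.$$
Combining with $\frac52|A'|>|2A'|$ and $|A'|=\sum_{i=1}^{5}|A_i|$ and rearranging produces
$$\frac32|A_1|+\frac32|A_2|-\frac12|A_3|+\frac12|A_4|+\frac32|A_5|>5|K|.$$

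To close I would invoke the elementary size bounds $|A_i|\le|A_1|\le|K|$, which follow at once from the decreasing ordering of the $|A_i|$ and from $A_1$ lying in a $K$-coset. Dropping the non-positive piece $-\frac12|A_3|+\frac12|A_4|\le 0$ (valid since $|A_3|\ge|A_4|$), the left-hand side is at most $\frac32(|A_1|+|A_2|+|A_5|)\le\frac92|K|$, contradicting the strict inequality $>5|K|$. There is no serious obstacle here; the one point to verify carefully is that the improvement in $|A_1+A_j|$ does not disturb the rest of the count of $L$-cosets contributing to $|2A'|$, but this is automatic since we are simply enlarging one of the five $A_1+A_\cdot$ size estimates without changing the collection of $L$-cosets being counted.
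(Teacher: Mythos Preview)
Your argument is correct and essentially identical to the paper's: both apply Lemma~\refl{2107a}~ii) via~\refe{0608a} to upgrade the bound on $|A_1+A_j|$ by $|A_1|$, feed this into the same $L$-coset count for $|2A'|$, and finish by bounding each $|A_i|$ by $|K|$ and using $|A_3|\ge|A_4|$. The only cosmetic difference is that the paper clears denominators (obtaining $3|A_1|+3|A_2|+|A_4|+3|A_5|>10|K|+|A_3|$, contradicted by $9|K|+|A_4|$), whereas you keep the fractional form.
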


\begin{proof}
By Lemma~\refl{2107a}, from~\refe{0608a} it follows that if, for an index
$i\in[2,5]$, the set $A_i$ meets two or more $K$-cosets, then $|A_1+A_i|\ge
|K|+|A_1|$. Hence, in this case
\begin{multline*}
  \frac52\,|A'| > (5|K|+|A_1|) + |A_2|+3|A_3|+2|A_4|+|A_5| \\
                            = 5|K| + 2|A'| - |A_1| -|A_2| + |A_3| - |A_5|,
\end{multline*}
leading to
  $$ 3|A_1| + 3|A_2| + |A_4| + 3|A_5| > 10|K|+|A_3| $$
which is wrong as the sum in the left-hand side is at most $9|K|+|A_4|$.
\end{proof}

As it follows from Claim~\refm{sep} and~\refe{1807a}, we have $|A_i+A_j|=|K|$
for all $i,j\in[1,5]$. Hence, $2A'$ is $K$-periodic and
  $$ |2A'| \ge 12|K| $$
(cf.~\refe{2907a}); indeed, equality holds as $|A'|\le 5|K|$ implies
$|2A'|<\frac52\,|A'|<13|K|$.

Let $\cA':=\phi_K(A')$, $\cA'':=\phi_K(A'')$, and $\cA:=\phi_K(A)$; thus
$|\cA'|=5$ and $|2\cA'|=12$. Also, write $t:=|\cA''|$ and $A''=B_1\longu B_t$
where each of $B_1\longc B_t$ is contained in a $K$-coset and the cosets are
pairwise distinct; notice that $|\cA|=5+t$.

If $\cA'+\cA''\not\seq2\cA'$, then there are $i\in[1,5]$ and $j\in[1,t]$ such
that the sum $A_i+B_j$ is disjoint from $2A'$; consequently,
\begin{gather*}
  \frac52\,|A'| > \frac94\,(|A'|+|A''|)
               = \frac94\,|A| > |2A| \ge |2A'| + |A_i+B_j|
                                          \ge 12|K|+|A_5|, \\
  5|A'| > 24|K| + 2|A_5|, \\
  5|A_1| + 5|A_2| + 5|A_3| + 5|A_4| + 3|A_5| > 24|K|
\end{gather*}
which is wrong.

Therefore, $\cA'+\cA''\seq2\cA'$; as a result, $\cA+\cA'\seq2\cA'$, and since
the inverse inclusion is trivial, we have, indeed, $\cA+\cA'=2\cA'$.

The relation $\cA'+\cA''\seq 2\cA'$ also shows that
$2\cA=(2\cA')\cup(2\cA'')$. Since $2A$ is aperiodic by Lemma~\refl{2Aper},
while $2A'$ is $K$-periodic as a consequence of~\refe{1807a}, we conclude
that there exist $i,j\in[1,t]$ such that $B_i+B_j$ is disjoint from $2A'$.

\begin{claim}
We have $t\le 4$.
\end{claim}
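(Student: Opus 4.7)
The plan is to apply Kneser's theorem to the pair $(\cA, \cA')$ in the quotient group $\Z_n/K$, exploiting the already-established identity $\cA + \cA' = 2\cA'$ and the fact that $|2\cA'| = 12$.

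First, I would note the trivial bound $|\cA + \cA'| \ge |\cA|$, which combined with $|\cA + \cA'| = 12$ immediately gives $t \le 7$. So assume for contradiction that $t \in \{5, 6, 7\}$. Since $12 = |\cA + \cA'| \le 9 + t = |\cA| + |\cA'| - 1$, Kneser's theorem applies; writing $\cH := \pi(\cA + \cA') = \pi(2\cA')$, I obtain
$$
12 = |\cA + \cH| + |\cA' + \cH| - |\cH|.
$$
Because $2\cA'$ is a union of $\cH$-cosets with $|2\cA'| = 12$, $|\cH|$ divides $12$, so $|\cH| \in \{1, 2, 3, 4, 6, 12\}$.

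Then I would rule out each value of $|\cH|$. In the identity above, $|\cA + \cH|$ is a multiple of $|\cH|$ that is at least $|\cA| = 5 + t \ge 10$, while $|\cA' + \cH|$ is a multiple of $|\cH|$ that is at least $|\cA'| = 5$. For $|\cH| \in \{1, 2, 3, 4\}$, a short arithmetic check shows that the minimum possible sum $|\cA + \cH| + |\cA' + \cH|$ already strictly exceeds $12 + |\cH|$, contradicting the Kneser identity. For $|\cH| = 6$, the identity forces $|\cA' + \cH| = 6$, so $\cA'$ sits inside a single $\cH$-coset; but then $|\cA'| = 5 > (6+1)/2$, contradicting the rectifiability of $\cA'$ via Lemma~\refl{rect2A}. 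Finally, for $|\cH| = 12$ the identity forces $\cA$ to lie in a single $\cH$-coset, and since $|\cH| = 12 < 37 \le m = n/|L| \le n/|K|$, the subgroup $\phi_K^{-1}(\cH) < \Z_n$ is proper, so $A$ would be contained in a proper coset, contradicting the standing hypothesis on $A$.

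The main obstacle is the case analysis on $|\cH|$; the critical point is that rectifiability of $\cA'$ (rather than of $\cA$, which we do not yet know!) enters only through Lemma~\refl{rect2A} to eliminate $|\cH| = 6$, while the index bound $m \ge 37$ from the character-sum lemma is precisely what rules out $|\cH| = 12$.
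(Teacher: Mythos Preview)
Your argument is correct and reaches $t\le4$ by a genuinely different route. The paper instead invokes Olson's lemma (Lemma~\refl{Olson}) in the pattern of Claim~\refm{1807a}: were $|\cA+\cA'|<|\cA|+\tfrac12|\cA'|$, then $\cA'$ would lie in a single coset of $\pi(\cA+\cA')$, so $A'$ would lie in a proper coset of $\Z_n$, contradicting Claim~\refm{A'per}; hence $|\cA+\cA'|\ge|\cA|+\tfrac12|\cA'|=(5+t)+\tfrac52$. The paper then converts this into $|A+A'|\ge(t+5)|A_1|+3|A_5|$ and compares with $|A+A'|<\tfrac52|A'|$ to extract $t\le4$ (though, since $|\cA+\cA'|=12$ is already known, the Olson inequality alone yields $12\ge t+\tfrac{15}{2}$, hence $t\le4$, without the extra step). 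Your Kneser-plus-divisor-case-analysis is more elementary in that it avoids Olson, at the price of a six-way split on $|\cH|$. One small point: in your $|\cH|=6$ case you appeal to the rectifiability of $\cA'$, which is true and used by the paper shortly afterward but is not yet justified at this point; you could instead argue, exactly as you do for $|\cH|=12$, that $\cA'$ in a single $\cH$-coset places $A'$ in a coset of $\phi_K^{-1}(\cH)$, proper since $6|K|\le6|L|<n$, contradicting Claim~\refm{A'per}---and this is precisely the engine driving the paper's Olson step.
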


\begin{proof}
Arguing as in the proof of Claim~\refm{1807a}, from Lemma~\refl{Olson} we
obtain
  $$ |\cA+\cA'| \ge |\cA|+\frac12\,|\cA'| \ge (5+t) + \frac52 $$
(cf.~\refe{1507a}). Thus, the set $A+A'$ consists of the $|\cA|=5+t$ subsets
$2A_1,A_1+A_2,A_1+A_3,A_1+A_4,A_1+A_5,A_1+B_1\longc A_1+B_t$, and at least
$\lcl\frac52\rcl=3$ more subsets of size at least $|A_5|$ each (with all
these subsets pairwise disjoint). As a result,
  $$ |A+A'| \ge (t+5)|A_1| + 3|A_5|. $$
On the other hand,
  $$ |A+A'| \le |2A| < \frac94\,|A| < \frac52\,|A'| =
                             \frac52\,(|A_1|+|A_2|+|A_3|+|A_4|+|A_5|). $$
Comparing the last two estimates, we get
  $$ (2t+5)|A_1| + |A_5| < 5|A_2|+5|A_3|+5|A_4| $$
whence $t\le 4$.
\end{proof}

\subcase{$t=1$} As explained above, in this case $2B_1$ is disjoint from
$2A'$. As a result, $|2A|\ge|2A'|+|2B_1| \ge 12|K|+|A''|$ and then
\begin{gather}
  \frac94\,(|A'|+|A''|) = \frac94\,|A| > |2A| \ge 12|K|+|A''|, \label{e:1707a} \\
  9|A'| + 5|A''| > 48|K|, \notag \\
  48|K| < \frac{86}9\,|A'| \le \frac{430}{9}\,|K| \notag
\end{gather}
which is wrong.

\subcase{$t=2$} Write $\bet_i:=\phi_K(B_i)$, $i\in\{1,2\}$; thus,
$\cA''=\{\bet_1,\bet_2\}$. Since $2\cA''\not\seq2\cA'$, there is a pair of
indices $1\le i\le j\le 2$ such that $\bet_i+\bet_j\notin2\cA'$. Suppose
first that $(i,j)$ is the unique pair with this property. In this situation
we have $|2A+K|-|2A|=|K|-|B_i+B_j|$ and $|A+K|-|A|=7|K|-|A|$. On the other
hand, $K$ is nonzero (as otherwise we would have $|A|=|\cA|=7$), and
$|2A+K|\ne\Z_n$ (otherwise $\frac n{|K|}=|2\cA|\le|2\cA'|+\binom t2+t=15$
while, on the other hand, $\frac n{|K|}\ge\frac n{|L|}\ge 37$). Consequently,
$|K|-|B_i+B_j|>7|K|-|A|$ by Lemma~\refl{MofA}, which yields
  $$ |A| > 6|K| + |B_i+B_j|. $$
From this estimate and
  $$ |A| = |A'| + |A''| \le 5|K| + |B_1|+|B_2| $$
we get $|B_1|+|B_2|>|B_i+B_j|+|K|$, which is impossible in view of
$\max\{|B_1|,|B_2|\}<|K|$  and $\min\{|B_1|,|B_2|\}\le|B_i+B_j|$.

We therefore conclude that there are at least two pairs $(i,j)$ with $1\le
i\le j\le 2$ and $\bet_i+\bet_j\notin\cA'$. If, moreover, one can find two
such pairs so that the sums $\bet_i+\bet_j$ are distinct from each other,
then the two corresponding sumsets $B_i+B_j$ jointly contain at least
$|B_1|+|B_2|=|A''|$ elements (which may not be obvious, but is not difficult
to see either). Consequently,
  $$ |2A| \ge |2A'|+|A''| \ge 12|K| + |A''| $$
leading to a contradiction as in the case $t=1$, cf.~\refe{1707a}.

We are left with the case where there are at least two pairs of indices $1\le
i\le j\le 2$ with $\bet_i+\bet_j\notin2\cA'$, but the sums $\bet_i+\bet_j$
are equal to each other for all such pairs $(i,j)$. Since $\bet_1+\bet_2$ is
distinct from each of $2\bet_1$ and $2\bet_2$, we actually have
$2\bet_1=2\bet_2$; that is, the two pairs are $(1,1)$ and $(2,2)$, while
$\bet_1+\bet_2\in2\cA'$. Acting as above, we get in this case
$|2A+K|-|2A|=|K|-|2B_1\cup2B_2|$ and $|A+K|-|A|=7|K|-|A|$, whence
$|K|-|2B_1\cup2B_2|>7|K|-|A|$ by Lemma~\refl{MofA}. Therefore $|A|>
6|K|+|2B_1\cup2B_2|$ which, along with $|A| = |A'| + |A''| \le 5|K| +
|B_1|+|B_2|$, gives $|B_1|+|B_2|>|2B_1\cup2B_2|+|K|$. This is impossible in
view of $\max\{|B_1|,|B_2|\}\le\min\{|K|,|2B_1\cup2B_2|\}$.

\subcase{$t\in\{3,4\}$} In this case $|\cA'|=5$, $|\cA''|=t$, $|\cA|=5+t$,
and $|\cA+\cA'|=|2\cA'|=12=|\cA|+|\cA'|-(t-2)$. Furthermore,
$|\cA|+|\cA'|=12<|\Z_n|/|K|$, $|\cA|\ge|\cA'|\ge 2$, and $\cA'$ is
rectifiable, not an arithmetic progression (by Claim~\refm{A'prog}) and not
contained in a proper coset (as a consequence of Claim~\refm{A'per}). Thus,
the assumptions of Lemma~\refl{kemp} are satisfied. Applying the lemma, we
conclude that $|\cA'|$ is even, a contradiction.

\case{$s\ge 6$} In this case $\tau':=|2A'|/|A'|<\frac52\le3(1-1/s)$. In view
of this estimate, and since $\phi_L(A')$ is contained in a rectifiable subset
of $\Z_n/L$, we can apply Proposition~\refp{combo} to the set $A'$ to find a
proper subgroup $H'<\Z_n$ and a progression $P'\seq\Z_n$ such that $A'\seq
P'+H'$, $|P'+H'|=|P'||H'|$, and $(|P'|-1)|H'|\le|2A'|-|A'|$.

By Claim~\refm{A'per} and Lemma~\refl{1.5}, we have
\begin{equation}\label{e:0102a}
  |2A'|\ge\frac32|A'|.
\end{equation}

If $A$ contained an element $a\notin(2P'-P')+H'$, then $a+A'\seq a+P'+H'$
would be disjoint from $2A'\seq 2P'+H'$, and in view of~\refe{0102a} we would
get
  $$ |2A| \ge |a+A'|+|2A'| \ge \frac52\,|A'| > \frac94\,|A|, $$
contradicting the small-doubling assumption. Thus, $A$ is entirely contained
in the set $(2P'-P')+H'$:
\begin{equation}\label{e:2P'P'}
  A \seq 2P'-P'+H'.
\end{equation}

Let $d$ denote the difference of the arithmetic progression $2P'-P'$. Since
$A$ is contained in a coset of the subgroup generated by $d$ and $H'$, this
subgroup is not proper; that is, the order of $\phi_{H'}(d)$ in the quotient
group $\Z_n/H'$ is $m':=n/|H'|$.

On the other hand, from~\refe{0308a} and Lemma~\refl{numbers},
\begin{multline*}
  |2P'-P'| = 3|P'|-2 = 3(|P'|-1) + 1 \\
     \le \frac3{|H'|}(|2A'|-|A'|) + 1
       = \frac3{|H'|}\Big(1-\frac1{\tau'}\Big)\,|2A'| + 1
         < \frac9{5|H'|}\,|2A|+1 \\
             < \frac9{5|H'|}\cdot 2C_0^{-1}n+1 < \frac{m'}2 + 1.
\end{multline*}

Thus, $\phi_{H'}(2P'-P')$ is an arithmetic progression with the difference
generating $\Z_n/H'$, and of size not exceeding $(|\Z_n/H'|+1)/2$; hence, a
rectifiable set. In view of~\refe{2P'P'}, the set $\phi_{H'}(A)$ is
rectifiable, too. Also, since $A$ meets at least four $H'$-cosets,
  $$ |2A| < \frac94\,|A| \le 3\Big(1-\frac1{|\phi_{H'}(A)|}\Big) |A|. $$
Consequently, we can apply Proposition~\refp{combo} to find a proper subgroup
$H<\Z_n$ and a progression $P\seq\Z_n$ such that $A\seq P+H$, $|P+H|=|P||H|$,
and $(|P|-1)|H|\le|2A|-|A|$. Thus $A$ is regular, contrary to the choice of
$A$ as a counterexample set.

This completes the proof in the case $s\ge 6$.

\vfill

\bigskip

\end{document}